\numberwithin{equation}{section}
\newtheorem{prop}{Proposition}[section]
\newtheorem{theo}[prop]{Theorem}
\newtheorem{lemm}[prop]{Lemma}
\newtheorem{coro}[prop]{Corollary}
\newtheorem{defi}[prop]{Definition}
\theoremstyle{remark}
\newtheorem{exam}{Example}
\newtheorem{rema}{Remark}[section]
\def\and{\quad{\rm and}\quad}
\def\<{\langle}
\def\>{\rangle}
\def\p{\partial}
\def\be{\begin{equation}}
\def\ee{\end{equation}}
\newcounter{mnotecount}[section]
\def\R{\mathbb{R}}
\def\SM{\mathbb{M}^{n+1}_m}
\def\Ric{\mathrm{Ric}}
\def\bg{\bar g}
\def\gE{g_{_E}}
\def\bRic{\overline{\Ric}}  
\def\So{\Sigma_{_O}}      
\def\Sh{\Sigma_{_H}}      
\def\fg{\breve{g}}              
\def\fH{{H}}                       
\def\m{\mathfrak{m}}        
\def\N{\mathbb{N}}           
\def\Hb{\bar H}                    
\def\Ab{\bar A}                    
\def\dvol{d \mathrm{vol}}   
\def\mB{\m_{_B}}
\def\E{\mathbb{E}}
\def\tXr{\tilde X_r}
\def\Xz{X_{\sigma}}
\begin{document}

\title{Minimal hypersurfaces and Boundary behavior \\ of compact manifolds with   nonnegative \\
 scalar curvature}

\author[Siyuan Lu]{Siyuan Lu}
\address{Department of Mathematics and Statistics, McGill University,  805 Sherbrooke O, Montreal, 
Quebec, Canada, H3A 0B9.}
\email{siyuan.lu@mail.mcgill.ca}

\author[Pengzi Miao]{Pengzi Miao}
\address{Department of Mathematics, University of Miami, Coral Gables, FL 33146, USA.}
\email{pengzim@math.miami.edu}

\thanks{The first named author's research was partially supported by  CSC fellowship. The second named author's research was partially supported by the Simons Foundation Collaboration Grant for Mathematicians \#281105.}

\begin{abstract}
On a  compact Riemannian manifold with boundary having positive mean curvature, 
a fundamental result of  Shi and Tam  states that, 
if the manifold has nonnegative scalar curvature and 
if the boundary is isometric to a strictly convex hypersurface in the Euclidean space, then the total mean 
curvature of the boundary  is no greater than the total mean curvature 
of the corresponding Euclidean hypersurface. In $3$-dimension, 
Shi-Tam's result is known to be equivalent to the Riemannian positive mass theorem.

In this paper, we provide  a supplement to Shi-Tam's result by including 
the  effect of minimal hypersurfaces on the boundary.
More precisely, given a compact manifold  $\Omega$ with nonnegative scalar curvature, 
assuming  its boundary consists  of two parts, $ \Sh $  and $ \So$, where 
$\Sh$ is the union of all closed minimal hypersurfaces in $\Omega$ and $ \So$ is isometric 
to a suitable $2$-convex hypersurface $\Sigma$ in a  spatial Schwarzschild manifold of positive mass $m$, 
we establish an inequality relating $m$, the area of $ \Sh$, and 
two weighted total mean curvatures  of $\So$ and $ \Sigma$.
In $3$-dimension, the inequality has implications to both isometric embedding 
and quasi-local mass problems. 
In a relativistic context, 
our result can be interpreted as a quasi-local mass type quantity of $ \So$ 
being greater than or equal to  the Hawking mass of $\Sh$. 
We further analyze  the limit of such  quasi-local mass  quantity associated with  suitably  chosen  
isometric embeddings of large coordinate spheres of an asymptotically flat $3$-manifold $M$
into a spatial Schwarzschild manifold.  We show that the limit equals the ADM mass of $M$.
It follows that our result on the  compact manifold $\Omega$ is equivalent to 
the Riemannian Penrose inequality.
\end{abstract}

\maketitle 

\markboth{Siyuan Lu and Pengzi Miao}{Boundary behavior of compact manifolds}

\section{Introduction and statement of results}

The main goal of this paper is to prove the following theorem:

\begin{theo}  \label{thm-intro-main}
Let  $(\Omega^{n+1}, \fg)$ be a compact, connected,  orientable, $(n+1)$-dimensional Riemannian manifold 
with nonnegative scalar curvature, with boundary $\p \Omega$. 
Suppose $ \p \Omega$ is the disjoint union of two pieces, $ \So $ and $ \Sh$, where 
\begin{enumerate}
\item[(i)]   $\So$ has positive mean curvature $\fH$; and  
\item[(ii)]  $\Sh$,  if nonempty, is a minimal hypersurface
(with one or more components)
and there are no other closed  minimal hypersurfaces in $ (\Omega, \fg)$.  
\end{enumerate}
Let $ \SM$ denote an  $(n+1)$-dimensional spatial Schwarzschild manifold,  
outside the  horizon, of  mass $ m >0 $ . 
Suppose $ \So$ is isometric to a closed, star-shaped, $2$-convex hypersurface 
$ \Sigma^n \subset \SM$ with $ \bRic(\nu, \nu) \le 0 $, where ${\bRic}$ is the 
Ricci curvature of $\SM$ and $ \nu $ is the outward unit normal to $ \Sigma$. 

If $ n < 7 $, then 
\be \label{eq-intro-main}
m + \frac{1}{n \omega_n} \int_\Sigma NH_m \, d \sigma  \geq 
 \frac12  \left(\frac{|\Sh|}{\omega_n}\right)^{\frac{n-1}{n}}  + \frac{1}{n \omega_n} \int_{\So} N \fH \, d \sigma . 
\ee
Here $H_m  $ is the mean curvature of $ \Sigma$ in $ \SM$, $ d \sigma$ is  the area element on $ \Sigma$ and $ \So$, 
$\omega_n$  is the area  of the standard unit sphere $\mathbb{S}^n$, 
$N$ is the static potential function on $ \SM$  
given by 
$$
N = \frac{ 1 - \frac{m}{2} | x|^{1-n} }{ 1 + \frac{m}{2} |x|^{1 - n } }
$$
if  one writes  
$$\SM = \left( \mathbb{R}^{n+1} \setminus \left\{ | x | <  \left( \frac{m}{2} \right)^\frac{1}{n-1}   \right\}, 
\left( 1 + \frac{m}{2}  | x |^{1-n}  \right)^\frac{4}{n-1} g_{_E} \right)  $$
where $g_{_E} $ is the  Euclidean metric,  
 $ N $ is also viewed as a function on $ \So$ via the isometry  between $\Sigma$ and $ \So $, 
 $ | \Sh |$ denotes the area of $ \Sh $,  and $ | \Sh | $ is taken to be $0$ if $ \Sh = \emptyset $.

Moreover, if equality in \eqref{eq-intro-main} holds, then 
\begin{equation*} \label{eq-intro-equal}
 \fH = H_m  \ \  \mathrm{and} \  \  \frac12  \left(\frac{|\Sh|}{\omega_n}\right)^{\frac{n-1}{n}}  = m . 
\end{equation*} 
In particular, $ \Sh$ must be nonempty  in this case. 
\end{theo}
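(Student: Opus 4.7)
The plan is to reduce \eqref{eq-intro-main} to the Riemannian Penrose inequality by extending $(\Omega,\fg)$ across $\So\cong\Sigma$ to an asymptotically flat manifold $(\tilde M,\tilde g)$ with nonnegative scalar curvature, whose ADM mass is controlled by a weighted mean-curvature integral.

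\emph{Construction of the extension.} Working in $\SM$ outside $\Sigma$, foliate the exterior by a one-parameter family $\{\Sigma_s\}_{s\ge 0}$ of star-shaped, $2$-convex hypersurfaces with $\Sigma_0=\Sigma$, and look for a quasi-spherical metric $\bar g = u^2\,ds^2 + g_s$ on the exterior region, where $g_s$ is the metric induced from $\SM$ on $\Sigma_s$. The requirement that $\bar g$ be scalar-flat is a Shi-Tam type parabolic equation for the lapse $u$, to be solved with initial datum $u|_{\Sigma_0}$ chosen so that the mean curvature of $\So$ in $\bar g$ equals the prescribed $\fH$. The hypotheses that $\Sigma$ is $2$-convex and satisfies $\bRic(\nu,\nu)\le 0$ should drive long-time existence of $u$ and the decay $u\to 1$ at infinity, making $\bar g$ asymptotically Schwarzschild with some ADM mass $m^\ast$.

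\emph{Monotonicity along the foliation.} Consider the weighted Brown-York type quantity
\begin{equation*}
Q(s) \;=\; \frac{1}{n\omega_n}\int_{\Sigma_s} N\bigl(H_m - H_{\bar g}\bigr)\, d\sigma_s ,
\end{equation*}
where $H_m$ and $H_{\bar g}$ are the mean curvatures of $\Sigma_s$ in $\SM$ and in $\bar g$ respectively. Using the static vacuum equation $\bar\nabla^2 N = N\bRic$ on $\SM$ together with the parabolic equation for $u$, a direct computation should yield $Q'(s)\ge 0$. At $s=0$ the value of $Q$ involves $H_m - \fH$, and as $s\to\infty$ the Schwarzschild asymptotics of $\bar g$ identify $Q(\infty)$ with $m^\ast - m$. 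Therefore
\begin{equation*}
m + \frac{1}{n\omega_n}\int_\Sigma N H_m\, d\sigma - \frac{1}{n\omega_n}\int_{\So} N\fH\, d\sigma \;\le\; m^\ast .
\end{equation*}

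\emph{Penrose inequality and equality.} Gluing $(\Omega,\fg)$ to the extension across $\So$, the matching of mean curvatures gives a Lipschitz metric $\tilde g$ on $\tilde M$ with distributionally nonnegative scalar curvature; by hypothesis (ii) the outermost minimal hypersurface in $\tilde M$ has area $|\Sh|$. Since $n<7$, the Riemannian Penrose inequality of Bray-Lee yields $m^\ast \ge \tfrac12(|\Sh|/\omega_n)^{(n-1)/n}$, which combined with the preceding bound gives \eqref{eq-intro-main}. When equality holds, the rigidity case of Bray-Lee forces $(\tilde M,\tilde g)$ outside its horizon to be isometric to a spatial Schwarzschild manifold of mass $m^\ast$; matching this rigidity with the quasi-spherical form of $\bar g$ on $\SM\setminus\mathrm{int}(\Sigma)$ forces $m^\ast = m$, $\Sigma$ to be a coordinate sphere in $\SM$, and $\fH = H_m$ on $\So$, whence $\tfrac12(|\Sh|/\omega_n)^{(n-1)/n} = m$ and in particular $\Sh \ne \emptyset$.

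The main obstacle is the monotonicity step: unlike in Shi-Tam's original Euclidean setting, the weight $N$ is nontrivial and the background $\SM$ is curved, so the differential identity $Q'(s)\ge 0$ must exploit the full static vacuum structure of $\SM$ and rely on careful cancellation of curvature terms using $\bar\nabla^2 N = N\bRic$. A secondary technical issue is establishing a priori estimates for the lapse $u$ that ensure both long-time existence along the $2$-convex foliation and the correct $u\to 1$ asymptotics needed to interpret $Q(\infty)$ as $m^\ast - m$.
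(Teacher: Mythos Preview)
Your overall architecture matches the paper's proof exactly: build a Bartnik--Shi--Tam extension on the Schwarzschild exterior of $\Sigma$ via an inverse curvature foliation, use a weighted Brown--York monotonicity along the leaves, glue to $(\Omega,\fg)$, and apply the Riemannian Penrose inequality. However, two points go wrong.

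\textbf{The monotonicity has the wrong sign, and the inequality chain then does not close.} In the paper the computation (Proposition~\ref{prop-mono}) gives
\[
\frac{d}{ds}\int_{\Sigma_s} N\bigl(H_m-H_{\bar g}\bigr)\,d\sigma
= -\int_{\Sigma_s}\eta^{-1}(\eta-f)^2\,H_m\,\frac{\partial N}{\partial\nu}\,d\sigma
  -\int_{\Sigma_s} N\,\sigma_2\,\eta^{-1}(\eta-f)^2\,d\sigma \le 0,
\]
so $Q$ is \emph{nonincreasing}, not nondecreasing. With the correct sign one obtains $Q(0)\ge Q(\infty)=m^\ast-m$, hence
\[
m+\frac{1}{n\omega_n}\int_\Sigma N(H_m-\fH)\,d\sigma \;\ge\; m^\ast
\;\ge\; \tfrac12\bigl(|\Sh|/\omega_n\bigr)^{(n-1)/n},
\]
which is the desired chain. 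Your stated inequality $m+Q(0)\le m^\ast$ together with $m^\ast\ge \tfrac12(|\Sh|/\omega_n)^{(n-1)/n}$ does not imply \eqref{eq-intro-main}; the two bounds point in incompatible directions.

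\textbf{The equality case should not invoke Penrose rigidity.} You deduce the equality statement from the rigidity clause of Bray--Lee, but the glued manifold $(\tilde M,\tilde g)$ has a corner along $\Sigma$, and rigidity for the Riemannian Penrose inequality on manifolds with corners is not available (the paper explicitly flags this as open in the remark following the proof). The paper instead reads equality off the monotonicity formula: if \eqref{eq-intro-main} is an equality then $Q$ is constant in $s$, which by the displayed identity forces $\eta\equiv f$, hence $H_{\bar g}=H_m$ on every leaf and in particular $\fH=H_m$ on $\Sigma$; then $m^\ast=m$ and the Penrose step becomes $m=\tfrac12(|\Sh|/\omega_n)^{(n-1)/n}$. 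No appeal to global rigidity is needed, and no conclusion that $\Sigma$ is a coordinate sphere is claimed.
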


\begin{rema}
Compact manifolds $(\Omega, \fg)$ satisfying conditions (i) and (ii) in Theorem \ref{thm-intro-main}  exist widely. 
For instance, given any compact, connected,  orientable  Riemannian manifold $(\tilde{\Omega}, \fg)$ 
with disconnected boundary $ \p \tilde \Omega$,  
if the mean curvature vector of $ \p \tilde \Omega$ points  inward at each boundary component,  
then by minimizing area among all hypersurfaces that bounds a domain with a chosen boundary component, 
one  can always construct such an $(\Omega, \fg)$ (under the  given dimension assumption). 
In a relativistic context, a  compact manifold $(\Omega, \fg)$ satisfying conditions (i) and (ii) 
represents a finite body surrounding the apparent horizon of the black hole in a time-symmetric initial data set. 
\end{rema}

\begin{rema}
Let $ \Sh^S = \p \SM$ be the minimal hypersurface boundary of $ \SM$. 
Using the fact $ m = \frac12  \left(\frac{|\Sh^S|}{\omega_n}\right)^{\frac{n-1}{n}} , $
we can write  \eqref{eq-intro-main} equivalently as 
\be \label{eq-intro-main-h}
 \frac12  \left(\frac{|\Sh^S|}{\omega_n}\right)^{\frac{n-1}{n}}  + \frac{1}{n \omega_n} \int_\Sigma NH_m \, d \sigma  \geq 
 \frac12  \left(\frac{|\Sh|}{\omega_n}\right)^{\frac{n-1}{n}}  + \frac{1}{n \omega_n} \int_{\So} N \fH \, d \sigma . 
\ee
Such an  inequality has the following variational interpretation. 
Let $g$ denote the induced metric on $\Sigma $ from the Schwarzschild metric $\bg$ on $\SM$.
Let $ \mathring{\mathcal{F}}_{(\Sigma, g)} $ be the set of {\em fill-ins of $(\Sigma, g)$ 
with outermost horizon inner boundary}, i.e. 
$ \mathring{\mathcal{F}}_{(\Sigma, g)} $ consists of all compact, connected, orientable  
manifolds $(\Omega, \fg)$ with nonnegative scalar curvature, with boundary satisfying 
(i) and (ii) such that  $ \So = \Sigma$ and $ \fg |_{ \So} = g $, where $ \fg |_{\So} $ is  the induced metric 
on $ \So$ from $\fg$. Let $ N$ be the function on $ \So  = \Sigma$, which is the restriction of the static potential
on $\SM$ to $\Sigma$. On $\mathring{ \mathcal{F}}_{(\Sigma, g)}$, 
consider the functional 
$$ (\Omega, \fg)  \longmapsto \frac12  \left(\frac{|\Sh|}{\omega_n}\right)^{\frac{n-1}{n}}  
+ \frac{1}{n \omega_n} \int_{\So} N \fH \, d \sigma .  $$
Inequality \eqref{eq-intro-main-h}  asserts that this functional is maximized at $(\Omega^S, \bg ) $,
where $\Omega^S$ is the domain in $\SM$  bounded by $\Sigma$ and $ \Sh^S$. 
(Such an interpretation of \eqref{eq-intro-main-h} in terms of fill-ins relates  to 
the work of Mantoulidis and 
the second author 
\cite{Mantoulidis-Miao}.) 
\end{rema}

Treating the assumption that  $ \So $ is isometric to  $ \Sigma \subset  \SM$
as a condition of having an  isometric embedding of $ \So $ into $ \SM$, 
 we  have  the following result. 

\begin{theo} \label{thm-intro-iso-emb}
Let $ (M^3, \fg)$ be  an asymptotically flat $3$-manifold.
Let $ S_r $ denote the  coordinate sphere of coordinate radius $r$
in a  coordinate chart  defining  the asymptotic flatness of $(M, \fg)$. 
Let $ g_r$ be the induced metric on $ S_r$.
Then, given any  constant $ m > 0 $, 
there exists an isometric embedding 
$$X_r: (S_r, g_r) \longrightarrow \mathbb{M}^3_m  $$ for each sufficiently large $r$, such that   
$\Sigma_r = X_r ( S_r)$ is a star-shaped, convex surface in $\mathbb{M}^3_m$,  with  $ \bRic (\nu, \nu) < 0 $
where $ \nu$ is the outward unit normal to $ \Sigma_r$;
moreover,  
\be \label{eq-intro-nby-limit}
  \lim_{r \to \infty}  \left( m + \frac{1}{8 \pi} \int_{S_r} N ( H_m - H) \, d \sigma  \right) =  \m , 
\ee
and
\be    \label{eq-intro-volume-limit}
 V(r) - V_m (r)    =    2 \pi r^2 (\m -m) + o (r^2 ) , \ \mathrm{as} \ r \to \infty.  
\ee 
Here $ \m $ is the ADM mass of $(M, \fg)$,
 $H$ is  the mean curvature of $S_r$ in $(M, \fg)$ and  $H_m$ is   the mean curvature of $ \Sigma_r$ in 
$\mathbb{M}^3_m$, 
$N$ is the static potential on $\mathbb{M}^3_m$,
 $N$ and $H_m$ are viewed as functions on $ S_r$ via the embedding $X_r$,
 $V(r)$  is  the volume of the region enclosed by $ S_r$ in $(M, \fg)$ and 
  $V_m (r)$ is the volume of the region enclosed by $ \Sigma_r$ in $ \mathbb{M}^3_m$.
\end{theo}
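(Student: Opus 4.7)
The plan is to construct $X_r$ as a small perturbation of a spherically symmetric embedding into $\SM = \mathbb{M}^3_m$. In isotropic coordinates $\mathbb{M}^3_m = (\R^3 \setminus \{|x| \le m/2\},\, u^4 \gE)$ with $u = 1 + m/(2|x|)$, each centered sphere $\{|x| = \rho\}$ embeds the round metric $\rho^2 u(\rho)^4 g_{S^2}$ and is star-shaped, strictly convex, and satisfies $\bRic(\nu,\nu) < 0$ for large $\rho$ (the last follows from a direct conformal Ricci computation, which shows the radial Ricci eigenvalue of $\bg$ is $\sim -2m |x|^{-3}$ at infinity). By asymptotic flatness, the rescaled metric $r^{-2} g_r$ converges in $C^k$ to $g_{S^2}$, so the area-matching equation $\rho^2 (1 + m/(2\rho))^4 = |S_r|_{\fg}/(4\pi)$ has a unique solution $\rho = \rho(r)$ with expansion $\rho(r) = r + (\m - m) + o(1)$. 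A Nirenberg-type implicit function theorem (as in Shi--Tam, but centered at the spherically symmetric embedding of radius $\rho(r)$ in $\mathbb{M}^3_m$ rather than at the standard Euclidean sphere) then yields an isometric embedding $X_r$ whose image $\Sigma_r$ is $C^k$-close to that reference sphere; the three geometric conditions being open, they persist under this perturbation for $r$ sufficiently large.

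For the limit \eqref{eq-intro-nby-limit}, I expand the integrands on the reference spherical embedding in inverse powers of $\rho$. The conformal mean-curvature identity $H_m = u^{-2} H_0 + 4 u^{-3} \p_{\nu_0} u$ with $H_0 = 2/\rho$, combined with $N = 1 - m/\rho + O(\rho^{-2})$ and $d\sigma_{\bg} = u^4 \rho^2\, d\omega_0$, yields by direct computation $\tfrac{1}{8\pi} \int N H_m \, d\sigma = \rho - m + o(1)$. The analogous expansion on $(S_r, \fg)$ gives $\tfrac{1}{8\pi} \int N H \, d\sigma = r - m + o(1)$ (the ADM mass $\m$ cancels from this integral at leading order, exactly as in the standard derivation of the Brown--York mass limit). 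Subtracting and using $\rho(r) = r + (\m - m) + o(1)$ then produces $\tfrac{1}{8\pi} \int N(H_m - H) \, d\sigma = \m - m + o(1)$, so that after adding $m$ the limit equals $\m$. The technical subtlety is that, for general $(M, \fg)$, the metric $g_r$ is not exactly round and $X_r$ is not exactly the spherically symmetric embedding; one must verify that the first-order angular perturbation of $X_r$ contributes only at order $o(1)$ to the weighted total mean curvature, which reduces to an integration by parts on $S^2$ against the radial function $N$.

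For \eqref{eq-intro-volume-limit}, I compute both volumes in the respective asymptotic charts. Using $\sqrt{\det \fg} = 1 + 3 \m/|x| + O(|x|^{-2})$ and $u^6 = 1 + 3 m/|x| + O(|x|^{-2})$, integration over Euclidean balls gives $V(r) = \tfrac{4\pi}{3} r^3 + 6\pi \m r^2 + o(r^2)$ and, for the $\bg$-volume of $\{|x| \le \rho\}$ in $\mathbb{M}^3_m$, the value $\tfrac{4\pi}{3} \rho^3 + 6\pi m \rho^2 + o(\rho^2)$. The area-matching condition forces the angular average of the radial perturbation of $X_r$ from the centered sphere $\{|x| = \rho(r)\}$ to vanish at leading order, so the enclosed volume $V_m(r)$ differs from the $\bg$-volume at radius $\rho(r)$ only by $o(r^2)$. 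Substituting $\rho(r) = r + (\m - m) + o(1)$ then yields $V(r) - V_m(r) = 2\pi(\m - m) r^2 + o(r^2)$, as claimed. \emph{The principal obstacle} of the whole argument is obtaining these fine two-term expansions --- $\rho(r) = r + (\m - m) + o(1)$ and the matching first-order angular control on $X_r$ --- for a general (not conformally flat) AF metric: neither follows from crude $C^0$ closeness of $X_r$ to the spherically symmetric embedding, and each requires carefully tracking the mass-order terms in the asymptotic expansions of $g_r$ and of the Schwarzschild reference.
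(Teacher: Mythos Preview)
Your approach has a genuine gap: the two-term expansions you rely on are not valid for a general asymptotically flat metric with decay rate $\tau>\tfrac12$, and several of your intermediate claims are in fact false, not merely hard to justify. The area of the coordinate sphere satisfies only $A(r)=4\pi r^2+O(r^{2-\tau})$; there is \emph{no} universal correction of the form $+8\pi\m r$ (that holds for Schwarzschild in isotropic coordinates, but not for general $(M,\fg)$). Consequently your area-matching expansion $\rho(r)=r+(\m-m)+o(1)$ fails, and so does the claim $\tfrac{1}{8\pi}\int_{S_r}NH\,d\sigma=r-m+o(1)$: using \cite{FST} one finds instead
\[
\int_{S_r}NH\,d\sigma=r^{-1}A(r)+4\pi r-8\pi\m-8\pi m+o(1),
\]
so the ADM mass $\m$ does \emph{not} cancel, and the uncontrolled term $r^{-1}A(r)=4\pi r+O(r^{1-\tau})$ is not $o(1)$. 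Likewise $\sqrt{\det\fg}=1+3\m/|x|+\dots$ is special to conformally flat coordinates; for general AF metrics the pointwise volume density has no such expansion, and your formula $V(r)=\tfrac{4\pi}{3}r^3+6\pi\m r^2+o(r^2)$ does not hold.

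The paper handles exactly this difficulty by never attempting to expand $A(r)$ beyond leading order. For the embedding it rescales: $r^{-2}g_r$ is $O(r^{-\tau})$-close to the round metric and embeds (via Proposition~\ref{prop-iso}) into $\mathbb{M}^3_{m/r}$, then one scales back. For the limits, on the $(M,\fg)$ side it quotes \cite{FST}; on the Schwarzschild side it uses two Minkowski-type identities (the traced Hessian of $\Phi=\int\phi$ and its contraction with the Newton tensor $\sigma_2^{ij}$) to express $\int_{\Sigma_r}NH_m\,d\sigma$ in a form that also contains $r^{-1}A(r)$ with the \emph{same} coefficient. The unknown area term then cancels in the difference $\int N(H_m-H)\,d\sigma$, and similarly $\tfrac12 rA(r)$ cancels in $V(r)-V_m(r)$. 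Your proposal could be salvaged only by either restricting to coordinates in which the area and volume really do have the claimed mass corrections (not the setting of the theorem), or by carrying $A(r)$ through symbolically and discovering this cancellation---which is essentially what the paper does.
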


Now we explain the motivations to and the implications of Theorem \ref{thm-intro-main}. 
Our first motivation  to Theorem \ref{thm-intro-main} is  the following  
theorem of  Shi and Tam \cite{ST}.

\begin{theo} [\cite{ST}] \label{thm-intro-ST}
Let $ (\tilde{\Omega}^{n+1}, \fg)$ be a compact,  Riemannian spin manifold 
with nonnegative scalar curvature, with boundary $\p \tilde \Omega$. 
Let $ \Sigma_i$, $1 \le i \le k $, be the  connected components of $ \p \tilde \Omega$. 
Suppose each $ \Sigma_i $  has positive mean curvature 
and each $ \Sigma_i$ is  isomeric to a strictly convex hypersurface 
$ \hat \Sigma_i  \subset \mathbb{R}^{n+1}$.
Then
\be \label{eq-intro-ST}
\int_{\hat \Sigma_i} H_0 \, d \sigma \ge \int_{\Sigma_i}  \fH \, d \sigma , 
\ee
where $ H_0 $ is the mean curvature of $ \hat \Sigma_i $ in $ \mathbb{R}^{n+1}$
and $ H$ is the mean curvature of $ \Sigma_i$ in $ ( \tilde \Omega, \fg)$. 
Moreover, if equality holds for some $i$, then $k =1 $ and 
$( \tilde \Omega, \fg )$ is isometric to a domain in $ \mathbb{R}^{n+1}$.  
\end{theo}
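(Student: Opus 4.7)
The plan is to follow the strategy Shi and Tam introduced: build an asymptotically flat extension beyond each boundary component $\Sigma_i$, express its ADM mass in terms of boundary data, and invoke the spinor positive mass theorem (this is where the spin hypothesis enters). I would treat one boundary component at a time. Let $\hat\Omega_i \subset \mathbb{R}^{n+1}$ be the convex body enclosed by $\hat\Sigma_i$. Since $\hat\Sigma_i$ is strictly convex, the family of equidistant hypersurfaces $\{\hat\Sigma_t\}_{t\ge0}$ foliating $\mathbb{R}^{n+1}\setminus \hat\Omega_i$ consists of smooth, strictly convex hypersurfaces, and in these coordinates $g_E = dt^2 + g_t$. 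Following Bartnik's quasi-spherical ansatz, I would look for a metric of the form
\[
\bar g = u^2\, dt^2 + g_t, \qquad u>0,
\]
on $\mathbb{R}^{n+1}\setminus \hat\Omega_i$, glued to $(\tilde\Omega,\fg)$ across the isometric identification of $\hat\Sigma_i$ with $\Sigma_i$. The induced metrics match automatically; requiring mean curvatures from both sides of the corner to agree determines the boundary condition $u|_{t=0} = H_0/\fH > 0$.

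The key analytic step is that $R(\bar g)\ge 0$ translates into a quasilinear parabolic equation for $u$, schematically
\[
2 \hat H_0\, \partial_t u \;=\; 2 u^2 \Delta_{g_t} u \;+\; \bigl(\,\textup{terms in }R(g_t),\hat H_0, |\hat A|^2\,\bigr)\, u \;+\; (\cdots)\, u^3,
\]
where $\hat H_0$ is the (strictly positive) mean curvature of $\hat\Sigma_t$. Convexity of every leaf and a maximum-principle analysis should give global existence, uniform two-sided bounds, and the decay $u\to 1$ at a sufficient rate to ensure asymptotic flatness of $\bar g$. A direct computation in the foliation then expresses the ADM mass of $\bar g$ as
\[
\m(\bar g) \;=\; \lim_{t\to\infty}\frac{1}{2(n-1)\omega_n}\int_{\hat\Sigma_t} \hat H_0\,(1-u^{-1})\, d\sigma,
\]
and the parabolic equation together with Hölder's inequality shows this integral is non-increasing in $t$. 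Evaluated at $t=0$ via the boundary condition, one obtains
\[
\m(\bar g) \;\le\; \frac{1}{2(n-1)\omega_n}\left(\int_{\hat\Sigma_i} H_0\, d\sigma - \int_{\Sigma_i}\fH\, d\sigma\right).
\]

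The glued manifold is a complete, asymptotically flat Riemannian manifold whose metric is Lipschitz across $\Sigma_i$, smooth elsewhere, has nonnegative scalar curvature on the smooth part, and has mean curvatures matching across the corner. In this setting, a corner/distributional version of the spinor positive mass theorem yields $\m(\bar g)\ge 0$, giving \eqref{eq-intro-ST}. For the rigidity clause, equality forces $\m(\bar g)=0$, whence the PMT rigidity identifies the extension with $(\mathbb{R}^{n+1}\setminus\hat\Omega_i, g_E)$; tracing the matching across $\Sigma_i$ (and arguing that a second boundary component would contradict the equality) then forces $k=1$ and $(\tilde\Omega,\fg)\cong(\hat\Omega_1, g_E)$.

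I expect the main obstacle to be the PDE analysis: long-time existence of $u$, uniform bounds away from $0$ and $\infty$, and the sharp decay rate needed to certify that $\bar g$ is asymptotically flat with a well-defined ADM mass equal to the claimed limit. A secondary difficulty is the legitimate application of the spinor positive mass theorem across a Lipschitz corner, which requires either a careful smoothing approximation that preserves nonnegativity of scalar curvature in an integral sense, or a distributional formulation tailored to the mean-curvature-matching condition at $\Sigma_i$.
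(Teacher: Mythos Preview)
Your proposal is correct and follows essentially the same approach as the paper's outline of the Shi--Tam proof: construct a scalar-flat quasi-spherical extension on the exterior of $\hat\Sigma_i$ via the Bartnik ansatz, exploit the monotonicity of $\int_{\Sigma_t}(H_0-H_u)\,d\sigma$ (which equals your $\int \hat H_0(1-u^{-1})\,d\sigma$ since $H_u=u^{-1}H_0$), identify its limit as the ADM mass, and apply the positive mass theorem across the Lipschitz corner. Two minor corrections: the extension metric is taken to have scalar curvature exactly zero (not merely $\ge 0$), and the monotonicity is obtained by a direct differentiation of the integral rather than by H\"older's inequality.
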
 

Theorem \ref{thm-intro-ST} is a  fundamental  result on compact manifolds 
with nonnegative scalar curvature with boundary,
obtained via  the Riemannian positive mass theorem \cite{SY79, Witten81}.
 For the purpose of  later explaining  the proof  of Theorem \ref{thm-intro-main},  
 we outline the  proof of Theorem \ref{thm-intro-ST} from  \cite{ST} as follows. 
 For simplicity, we assume  $k=1$ and 
 denote $ \Sigma_1 $ by  $\Sigma$. 
 Identifying $\Sigma$ with its isometric image in $ \mathbb{R}^{n+1}$  
and using the assumption that  $ \Sigma $ is  convex in $ \R^{n+1}$, 
one can write the Euclidean metric $ \gE$ on $\E$,  the exterior of $ \Sigma $, 
as  $ \gE = d t^2 + g_t $, where $g_t $ is  the induced metric  on the  hypersurface $\Sigma_t$  that 
has a fixed Euclidean distance $t$  to $ \Sigma$.
Given the mean curvature function $ \fH > 0 $ on $ \Sigma$, one 
shows that there exists a function $ u > 0 $ on $ \E$ such that
$ g_u = u^2 d t^2 + g_t $ has zero scalar curvature, $(\E, g_u) $ is asymptotically flat, and the mean curvature 
$H_u $ of $ \Sigma_t $ in $ (\E, g_u)$ 
satisfies $ H_u =  \fH$ at $ \Sigma_0 = \Sigma$. 
A key feature of such  an $(\E, g_u)$ is that the integral 
\be \label{eq-intro-BY}
\frac{1}{ n \omega_n}  \int_{\Sigma_t} (H_0 - H_u ) \, d \sigma 
 \ee
is monotone nonincreasing  and it  converges to $ \m (g_u) $, 
where $\m (g_u) $ is the ADM mass \cite{ADM} of $ (\E, g_u)$. 
By gluing $(\tilde \Omega, \fg) $ and $(\E, g_u)$ along their common boundary $\Sigma$ 
and applying  the Riemannian positive mass theorem,
which is  still valid under the condition that the mean curvatures of $\Sigma$ in $(\tilde \Omega, \fg)$ and $ (\E, g_u)$ agree 
(see \cite{ST, Miao02}), one concludes that
\be \label{eq-intro-pf-ST}
\frac{1}{n \omega_n}  \int_\Sigma (H_0 - H) \, d \sigma \ge \lim_{t \rightarrow \infty} \frac{1}{n \omega_n}   
\int_{\Sigma_t}  (H_0 - H) \, d \sigma  =  \m (g_u) \ge 0  ,
 \ee
which proves \eqref{eq-intro-ST}.

One of the most important features of Theorem \ref{thm-intro-ST} is that,
when  $n=2$, by  the  solution to the Weyl embedding problem (\cite{Nirenberg, Pogorelov}),  
Theorem \ref{thm-intro-ST} implies the positivity of  the Brown-York quasi-local mass (\cite{BY1, BY2})
of $ \p \tilde \Omega$, under the  assumption that  
$ \p  \tilde \Omega$ is a topological  $2$-sphere with positive  Gauss curvature.  

\begin{rema}
When $ n > 2 $, Eichmair,  Wang and the second author \cite{EMW} proved that 
Theorem \ref{thm-intro-ST}   remains  valid  if 
each component $ \Sigma_i$ is isometric to a  star-shaped hypersurface  with positive scalar curvature in $ \mathbb{R}^{n+1}$.
It was also noted in \cite{EMW} that  the spin assumption therein  can be dropped  when $  n < 7 $.
Recently, Schoen and Yau \cite{SY17} proved  that the Riemannian positive mass theorem holds 
in all dimensions without a spin assumption. Therefore, by the argument in \cite{EMW}, 
results in \cite{ST, EMW}  also  hold in all dimensions without a spin assumption.
\end{rema}

To motivate Theorem \ref{thm-intro-main} from Theorem \ref{thm-intro-ST}, 
one may  consider the setting  $ k > 1 $ of 
Theorem \ref{thm-intro-ST}. In this case,  given  any boundary component
$\Sigma_i$,  there  exists a minimal hypersurface $S_i$, possibly disconnected, 
in the interior of $(\tilde \Omega, \fg)$ such that $ S_i$ and $ \Sigma_i$ bounds a domain $\Omega$ 
satisfying conditions (i) and (ii) in Theorem \ref{thm-intro-main}. 
Thus, besides the nonnegative scalar curvature, one wants to understand the influence of $ S_i$ on $ \Sigma_i$.
This is indeed related to the following Riemannian Penrose inequality, 
which is our second motivation to Theorem \ref{thm-intro-main}. 

\begin{theo}[\cite{HI01, B, BL}] \label{thm-intro-RPI}
Let $M^{n+1}$ be an asymptotically flat manifold with nonnegative scalar curvature, 
with boundary $\p M$, where $  n < 7$. 
Suppose $ \p M $ is an outer minimizing, minimal hypersurface (with one or more component), then
\be  \label{eq-intro-RPI}
\m (M)  \geq \frac12\left( \frac{|\p M |}{\omega_{n}}  \right)^{\frac{n-1}{n}}, 
\ee 
where $ \m (M) $ is the ADM mass of $M$ and $ | \partial M | $ is  the area of $ \partial M $.  
Moreover, equality holds if and only if $M$ is isometric to a spatial Schwarzschild manifold outside its horizon. 
\end{theo}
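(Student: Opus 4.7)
The plan is to follow Bray's conformal flow of metrics, extended by Bray--Lee to cover the higher-dimensional cases up to $n < 7$, since the stated hypothesis permits a disconnected horizon. (An alternative route for a connected horizon in the case $n=2$ is Huisken--Ilmanen's weak inverse mean curvature flow combined with the monotonicity of the Hawking mass.) The central idea is to construct a one-parameter family $(M, g_t)_{t \geq 0}$ of asymptotically flat metrics on $M$, conformal to $\fg$, each with nonnegative scalar curvature and an outermost minimal boundary $\Sigma_t$, such that (a) the $g_t$-area $|\Sigma_t|_{g_t}$ is preserved, (b) the ADM mass $\m(g_t)$ is non-increasing, and (c) as $t \to \infty$ the exterior of $\Sigma_t$ converges to the exterior of the horizon in a spatial Schwarzschild manifold of mass $m_\infty \geq 0$.

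Concretely, set $g_t = u_t^{4/(n-1)} \fg$ with $u_0 \equiv 1$ and evolve by $\partial_t u_t = v_t$, where $v_t$ is the $\fg$-harmonic function on the exterior of $\Sigma_t$ with $v_t = 0$ on $\Sigma_t$ and appropriate decay at infinity (chosen so that $u_t \to e^{-t}$ there). Harmonicity of $u_t$ outside $\Sigma_t$ ensures that $(M, g_t)$ has nonnegative scalar curvature wherever $u_t > 0$, and the Dirichlet boundary condition $v_t = 0$ keeps $\Sigma_t$ minimal in $g_t$ and preserves its $g_t$-area. To obtain mass monotonicity, reflect the exterior of $\Sigma_t$ across itself through the harmonic conformal factor; this produces a complete manifold $\widetilde{M}_t$ with two asymptotically flat ends and (distributionally) nonnegative scalar curvature across $\Sigma_t$. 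Applying the Riemannian positive mass theorem---available in all dimensions without spin by Schoen--Yau \cite{SY17}---to $\widetilde{M}_t$, together with an asymptotic expansion of $u_t$, yields $\frac{d}{dt}\m(g_t) \leq 0$. Since $|\Sigma_t|_{g_t}$ is constant along the flow and spatial Schwarzschild satisfies $m = \frac{1}{2}(|\p \SM|/\omega_n)^{(n-1)/n}$, the limit gives $m_\infty = \frac{1}{2}(|\p M|/\omega_n)^{(n-1)/n}$; combining with (b) produces \eqref{eq-intro-RPI}.

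For the rigidity clause, equality forces $\m(g_t)$ to be constant in $t$, so the rigidity part of the positive mass theorem applied to the doubling $\widetilde{M}_0$ identifies $(M, \fg)$, outside $\p M$, with the corresponding spatial Schwarzschild exterior. The main obstacle is the mass-monotonicity step: $\widetilde{M}_t$ is only Lipschitz across $\Sigma_t$, so the positive mass theorem cannot be invoked directly. The Lipschitz joining must be handled either by smoothing the metric near the horizon while controlling the sign of the scalar curvature (in the spirit of \cite{ST, Miao02}) or by justifying a distributional version that admits nonnegative mean-curvature jumps. In parallel, one must control the isolated times at which the outermost horizon $\Sigma_t$ jumps outward, and it is precisely here that the restriction $n < 7$ enters: it guarantees the smoothness of outer-minimizing hypersurfaces via geometric measure theory, keeping the conformal flow geometrically tractable and the evolution of $\Sigma_t$ piecewise smooth.
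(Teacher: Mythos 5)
The paper does not itself prove Theorem~\ref{thm-intro-RPI}; it is a cited black-box result whose proofs live in \cite{HI01}, \cite{B}, and \cite{BL}, so there is no ``paper's own proof'' to compare against. Your sketch is nonetheless a recognizable and broadly accurate outline of the Bray / Bray--Lee conformal flow, which is the right reference for the stated generality (disconnected horizons, $n < 7$): the harmonic conformal factor preserving nonnegative scalar curvature, the outermost minimal area enclosure $\Sigma_t$, the doubling-plus-positive-mass-theorem argument for $\frac{d}{dt}\m(g_t)\le 0$, and the geometric-measure-theory origin of the dimensional restriction are all correctly placed.

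Be aware, however, that several of your intermediate assertions are themselves substantial theorems, not consequences of the stated boundary condition. The constancy of $|\Sigma_t|_{g_t}$ is \emph{not} an immediate consequence of $v_t|_{\Sigma_t}=0$; it is Bray's area theorem, proved through a delicate analysis of how the outermost minimal enclosure jumps outward and how areas transform under the flow. Likewise, convergence of the flowed exteriors to a spatial Schwarzschild exterior as $t\to\infty$, together with identifying $m_\infty$ with $\frac12\left(|\p M|/\omega_n\right)^{(n-1)/n}$, is one of the hardest parts of \cite{B,BL} and cannot be recorded as a one-line conclusion. Finally, your rigidity paragraph is incomplete in the higher-dimensional case: as the paper itself remarks, \cite{BL} obtains the equality case under a spin hypothesis, and removing spin requires the corner-regularization result of McFeron--Sz\'{e}kelyhidi \cite{MS}. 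Invoking only ``the rigidity part of the positive mass theorem applied to $\widetilde M_0$'' does not by itself deliver the rigidity statement across the full range $2\le n<7$.
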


When $ n =2$,  Theorem \ref{thm-intro-RPI} was first proved by Huisken and Ilmanen \cite{HI97, HI01}  
for the case that $ \p M$  is connected, and later proved  by  Bray \cite{B} for  the general case  
in which  $ \p M $ can have multiple components.
For higher dimensions,  Bray and Lee  \cite{BL} proved  inequality \eqref{eq-intro-RPI}  for $ n < 7 $ 
and established the rigidity case assuming  that $M$ is spin.
(Without the spin assumption, the rigidity case follows by combining results of  Bray and Lee \cite{BL} 
and  McFeron and Sz\'{e}kelyhidi \cite{MS}.)

To compare Theorem \ref{thm-intro-main} and Theorem \ref{thm-intro-RPI}, 
we can write \eqref{eq-intro-main} equivalently as
\be \label{eq-intro-main-sh}
m + \frac{1}{n \omega_n} \int_{\So} N ( H_m - \fH)  \, d \sigma  \geq 
 \frac12  \left(\frac{|\Sh|}{\omega_n}\right)^{\frac{n-1}{n}} 
\ee
by identifying $ \So$ and $ \Sigma$.
The quantity  on the  left side of \eqref{eq-intro-main-sh} depends only on 
the assumption on the (outer) boundary component $\So$ of  $ \Omega$, 
while the mass $\m (M)$ in  \eqref{eq-intro-RPI} is determined solely   
by the asymptotically flat end of $M$. 
In this sense, Theorem \ref{thm-intro-main} can be viewed as a localization of 
Theorem \ref{thm-intro-RPI}  to a compact manifold with boundary satisfying conditions (i) and (ii). 
 Indeed, by \eqref{eq-intro-nby-limit} in Theorem \ref{thm-intro-iso-emb}
 and the fact that our proof of Theorem \ref{thm-intro-main}  uses  \eqref{eq-intro-RPI}, 
 Theorem \ref{thm-intro-main}  is  equivalent to the Riemannian Penrose inequality \eqref{eq-intro-RPI} when $ n = 2$.
In this case, the right side of \eqref{eq-intro-main-sh} is  the Hawking quasi-local mass \cite{Hawking} of $\Sh$, 
and \eqref{eq-intro-main-sh} describes how $\Sh$, which models the apparent horizon of  black hole, 
contributes to the quasi-local mass of  a  body surrounding it.

\begin{rema}
In \cite{CWWY},  Chen, Wang, Wang and Yau introduced a notion 
of quasi-local energy in reference to a general static spacetime. 
Setting $ \tau = 0 $ in equation (2.10) in \cite{CWWY}, 
one sees that the quasi-local energy of a $2$-surface $ \Sigma $ defined in  \cite{CWWY} 
with respect to an isometric embedding of $\Sigma$ into  a time-symmetric slice of Schwarzschild
the Schwarzschild spacetime with mass $m$ is given by 
$  \frac{1}{8 \pi } \int_{\Sigma} N ( H_m - \fH)  \, d \sigma $,
which agrees with the surface integral  on the left side of \eqref{eq-intro-main-sh} 
with $ \Sigma = \So$. 
\end{rema}

To illustrate  that Theorem \ref{thm-intro-main} provides a  supplement to 
Shi-Tam's result,  we want to make a connection  between   \eqref{eq-intro-main-sh} and 
an inequality that can be obtained by directly combining  \eqref{eq-intro-RPI} 
and Shi-Tam's proof of  Theorem \ref{thm-intro-ST}. 
Only for the convenience of making  a comparison, we list
the following inequality in a theorem format:

\vspace{.2cm}

\noindent {\bf Theorem 1.3'}
{\em Let $ (\Omega^{n+1}, \fg) $ be a compact Riemannian manifold with nonnegative scalar curvature, with boundary $ \p \Omega$, 
satisfying conditions (i) and (ii) in Theorem \ref{thm-intro-main}. 
Suppose $ \Sh \neq \emptyset $  and  $\So$ is isometric to a strictly convex hypersurface $ \Sigma^n \subset \R^{n+1}$.
 If $ n < 7$, then 
\be \label{eq-intro-ST-sh}
 \frac{1}{n \omega_n} \int_{\So} ( H_0 - \fH)  \, d \sigma  > 
 \frac12  \left(\frac{|\Sh|}{\omega_n}\right)^{\frac{n-1}{n}} ,
\ee
where $H_0$ is the mean curvature of $\Sigma$ in $ \R^{n+1}$. 
}

\vspace{.2cm} 

The proof of  \eqref{eq-intro-ST-sh}  is identical to Shi-Tam's proof of Theorem \ref{thm-intro-ST} outlined earlier, 
except that in the final  inequality of \eqref{eq-intro-pf-ST}, one replaces 
the Riemannian positive mass theorem 
by the Riemannian Penrose inequality to yield 
\be \label{eq-intro-pf-ST-sh}
\frac{1}{n \omega_n}  \int_{\So} (H_0 - H) \, d \sigma \ge \lim_{t \rightarrow \infty}  \frac{1}{n \omega_n}  \int_{\Sigma_t}  (H_0 - H) \, d \sigma  
=  \m (g_u) \ge    \frac12  \left(\frac{|\Sh|}{\omega_n}\right)^{\frac{n-1}{n}} .
 \ee
The fact  that  \eqref{eq-intro-RPI} is applicable to the manifold  obtained by gluing $(\Omega, \fg)$ and $(\E, g_u)$
was demonstrated  in \cite{Miao09} for $n=2$ and in \cite{MM} for $  n < 7$. 

\begin{rema}
By the argument in \cite{EMW},  
 \eqref{eq-intro-ST-sh} holds with  the assumption that $\Sigma \subset \R^{n+1}$ is strictly convex replaced by 
 that $ \Sigma$ is star-shaped with positive scalar curvature. 
 Such a statement  is precisely   the $m=0$ analogue of Theorem \ref{thm-intro-main} for the case 
$\Sh \neq \emptyset$.
\end{rema}

Inequality  \eqref{eq-intro-ST-sh} takes a simpler form than \eqref{eq-intro-main-sh},
however  it is always a strict inequality. 
This is because, if the first inequality in \eqref{eq-intro-pf-ST-sh} were equality, 
the function $u$ would be identically $1$ (implied by the monotonicity calculation of 
\eqref{eq-intro-BY} in \cite{ST, EMW}),  consequently  $ H_0 = \fH$ identically,  
 which would show  $ 0 \ge | \Sh | $, contradicting the assumption  $ \Sh \neq \emptyset$. 
A more intuitive reason for \eqref{eq-intro-ST-sh} to be  strict  is that, though $ \Sh$ is a nonempty 
minimal hypersurface in $ \Omega^{n+1}$,   \eqref{eq-intro-ST-sh} is obtained by comparing $ \So $ 
to a hypersurface in $ \R^{n+1}$ which is free of  closed minimal hypersurfaces. 

For the above reason, we consider an assumption  $ \So$ is isometric to an $ \Sigma \subset \SM$
in Theorem \ref{thm-intro-main}. 
In particular, \eqref{eq-intro-main-sh} does become an equality when $\Omega$ itself  is  the domain 
in $ \SM$ bounded by  $\Sigma$ and the Schwarzschild horizon $\Sh^S$.

The fact that \eqref{eq-intro-main-sh} gives a  refined estimate  on $ | \Sh |$, sharper than \eqref{eq-intro-ST-sh}, 
can be  illustrated by the  case  in which   $\So$ is  isometric to a round sphere. 
In the following example, for simplicity, we  take $n=2$.

\begin{exam}
Suppose  $ \Omega $ is a compact $3$-manifold with nonnegative scalar curvature,
with boundary $\p \Omega$, satisfying conditions (i) and (ii) in Theorem \ref{thm-intro-main}.
Suppose  $ \Sh \neq \emptyset $ and $ \So $  is isometric to a round sphere with  area $ 4\pi R^2$.
Then  \eqref{eq-intro-ST-sh}  shows 
 \be \label{eq-exam-ST-sh}
  R -  \frac{1}{8\pi} \int_{\So}  \fH \, d \sigma >  \sqrt{ \frac{ | \Sh |}{16 \pi} } .
 \ee 
On the other hand, Theorem \ref{eq-intro-main}  applies to any  $ \mathbb{M}_m^3$ with
 $ m \in \left(0, \frac12 R \right)$ 
 since $ \So $ is  isometric to a rotationally symmetric sphere   in such an $\mathbb{M}^3_m $. 
Thus,  by  \eqref{eq-intro-main-sh}, 
\be \label{eq-intro-main-sh-so-round}
m + \frac{1}{8 \pi}  \int_{\So} N  \left( N \frac{2}{R}  - \fH \right) \, d \sigma \ge  \sqrt{ \frac{ | \Sh |}{16 \pi} }  
\ee
with $N = \sqrt{ 1 - \frac{2m}{R} }$. 
Let $ \Phi (m) $ denote the quantity on the left side of \eqref{eq-intro-main-sh-so-round}. 
(The left side of \eqref{eq-exam-ST-sh}  equals $ \lim_{m \to 0+} \Phi (m)$.) 
By  \eqref{eq-intro-main-sh-so-round}, 
\be 
 \min_{ 0 < m < \frac{R}{2} } \Phi (m) \ge  \sqrt{ \frac{ | \Sh |}{16 \pi} } . 
\ee
Note that either  \eqref{eq-exam-ST-sh} or \eqref{eq-intro-main-sh-so-round} implies 
 $ 0 <  \frac{1}{8 \pi R} \int_{\So} \fH \, d \sigma < 1 $.
Therefore, via direct calculation, one has 
\be \label{eq-intro-round-case}
\begin{split}
 R -  \frac{1}{8\pi} \int_{\So}  \fH \, d \sigma  > & \  \min_{ 0 < m < \frac{R}{2} } \Phi (m)  
=   \frac{R}{2} \left[ 1 - \left(  \frac{1}{8 \pi R} \int_{\So} \fH \, d \sigma  \right)^2 \right] \\
\ge  & \  \sqrt{ \frac{ | \Sh |}{16 \pi} }  .
\end{split} 
\ee
(It is clear that, if $ \Omega$ is the region bounded by a rotationally symmetric sphere 
and the horizon boundary in some $\mathbb{M}^3_m $, then 
$ \min_{ 0 < m < \frac{R}{2} } \Phi (m)   =  \sqrt{ \frac{ | \Sh |}{16 \pi} }  $.)
In \eqref{eq-intro-round-case},  it is also intriguing  to 
note that  $ \min_{ 0 < m < \frac{R}{2} } \Phi (m) $ is achieved
at $ m = m_*$  where $m_*$, 
determined by $ N = \frac{1}{8 \pi R} \int_{\So} \fH \, d \sigma  $,
agrees with $ \min_{ 0 < m < \frac{R}{2} } \Phi (m) $, i.e. 
\be \label{eq-intro-optimal-m}
 m_* = \min_{ 0 < m < \frac{R}{2} } \Phi (m)   .
 \ee
This means that an optimal background $\mathbb{M}^{3}_{m_*} $ that is used to be compared with 
 $\Omega$ is indeed  determined by the minimal value of $\Phi (m)$. 
 \end{exam}

\begin{rema}
Calculation in relation to the example above 
was first carried out  in \cite{Miao09} where the special  case of  Theorem \ref{thm-intro-main} 
in which  $\So $ is  isometric to  a round sphere was proved.
The implication of \eqref{eq-intro-optimal-m} on the quasi-local mass of such round surfaces
 was also discussed in \cite{Miao09}. 
\end{rema}

Next, we comment on the implication of Theorem \ref{thm-intro-main}  on isometric embeddings 
of a $2$-sphere into a  Schwarzschild manifold  $ \mathbb{M}^{3}_{m} $ with $ m > 0$.
It was proved by Li and Wang \cite{LW} that, if $\sigma$ is a metric on the $2$-sphere $ \mathbb{S}^2$, 
an isometric embedding of  $(\mathbb{S}^2, \sigma) $ into $ \mathbb{M}^3$ may not be unique. 
Indeed, it was shown  in \cite{LW} that, if $ \sigma_r$ is the  standard round metric of area $ 4 \pi r^2 $ 
with  $ r > 2 m $, then $ (\mathbb{S}^2, \sigma_r)$ admits 
 an isometric embedding into $\mathbb{M}^3_m$ that is  close to but different from the 
 standard embedding whose image is a rotationally symmetric sphere. 
For this reason, one knows that  inequality \eqref{eq-intro-main} 
does depend on the choice of the isometry between $\So $ and $ \Sigma$. 
 (This contrasts with inequality \eqref{eq-intro-ST}  which only depends on the 
 intrinsic metric on $\Sigma_i$.)
However, in the following example, we demonstrate that \eqref{eq-intro-main}  can be  applied to 
reveal  information on such different isometric embeddings into $\mathbb{M}^{3}_{m}$. 

\begin{exam}
Let $ \Sigma \subset \mathbb{M}^3_m$ be a closed, star-shaped, convex surface 
 with $ \bRic(\nu, \nu) \le 0 $. 
Let $ H_m$ denote its mean curvature. 
Suppose $\iota: \Sigma \rightarrow \tilde \Sigma $ is an isometry between $\Sigma$ 
and another surface 
$ \tilde \Sigma  \subset \mathbb{M}^3_m $ with properties 
\begin{itemize}
\item[(a)] $ \tilde \Sigma $ bounds a domain $D$ with the Schwarzschild horizon  
$ \Sh^S = \p \mathbb{M}^3_m$, and
\item[(b)] $ \tilde \Sigma$ has positive mean curvature $\tilde H_m$ with respect to the outward unit normal.
\end{itemize}
Then Theorem \ref{thm-intro-main} is applicable to the domain $D$ to give
\be \label{eq-intro-main-exam}
m + \frac{1}{8 \pi } \int_\Sigma NH_m \, d \sigma  \geq 
\sqrt{  \frac{ | \Sh^S | }{ 16 \pi } }  + \frac{1}{8 \pi } \int_{\tilde \Sigma} \tilde{N} \tilde H_m  \, d \sigma 
\ee
with $ \tilde N = N \circ \iota^{-1}  $. 
(Note that, if (a) is replaced by an assumption $ \tilde \Sigma = \partial D$ for some $D$, then the term
involving  $| \Sh^S|$  will be absent in \eqref{eq-intro-main-exam} and the inequality is strict.)
Since $ m = \sqrt{  \frac{ | \Sh^S | }{ 16 \pi } } $, 
\eqref{eq-intro-main-exam} shows 
\be \label{eq-intro-main-exam-1}
 \int_\Sigma NH_m \, d \sigma  \geq \int_{\tilde \Sigma} \tilde{N} \tilde H_m  \, d \sigma ,
\ee
with equality holds only if $H_m \circ \iota^{-1} = \tilde H_m $. 
Now suppose we consider the special case in which $ \Sigma$ is a rotationally symmetric sphere, 
then $ N $ is a constant on $ \Sigma$, hence $ \tilde N $ is also a constant that equals $N$. 
In this case, \eqref{eq-intro-main-exam-1} becomes 
\be \label{eq-intro-main-exam-2}
 \int_\Sigma H_m \, d \sigma  \geq \int_{\tilde \Sigma} \tilde H_m  \, d \sigma .
\ee
(In the case of $\tilde \Sigma = \partial D$, one has 
$ 8\pi m N^{-1} +  \int_\Sigma H_m \, d \sigma  >  \int_{\tilde \Sigma} \tilde H_m  \, d \sigma$.)
Since  $ H_m$ is a constant, equality in \eqref{eq-intro-main-exam-2} 
holds only if $\tilde H_m $ is a constant. 
By the result of Brendle \cite{Brendle-CMC}, we conclude that  $\tilde \Sigma$ must be  $\Sigma$
when  equality holds in  \eqref{eq-intro-main-exam-2}.
\end{exam}

We now outline the proof of Theorem \ref{thm-intro-main}. 
The first step in our proof is to generalize the monotonicity of the Brown-York mass type integral \eqref{eq-intro-BY}
in Shi-Tam's proof of Theorem \ref{thm-intro-ST} to the monotonicity of a weighted Brown-York mass type   integral 
\be \label{eq-intro-w-BY}
 \int_{\Sigma_t} N ( \Hb - H_\eta ) \, d \sigma 
\ee
in a general static background on which $N$ is a positive static potential function. 
The idea of imposing a suitable weight function in  \eqref{eq-intro-w-BY} to obtain monotonicity 
goes back to the work of Wang and Yau \cite{WY07} in which isometric embeddings of surfaces 
into hyperbolic spaces are considered. 
Given a static Riemannian manifold $(\N, \bar g)$ (see Definition \ref{def static}), 
let $\{ \Sigma_t \}$ be a family of closed hypersurfaces  evolving in $(\N, \bar g)$ with speed $f > 0 $,
we show that, as long as $ \Sigma_t $ is $2$-convex and $ \frac{\p N}{\p \nu} > 0 $, 
\eqref{eq-intro-w-BY} is monotone nonincreasing along the flow. 
Here  $2$-convexity of $ \Sigma_t $ means that $\sigma_1> 0 $ and $ \sigma_2 > 0 $, 
where $\sigma_1$ and $ \sigma_2$  are the first and second elementary symmetric
functions of the principal curvatures of $\Sigma_t$ in $(\N, \bar g)$; $ \nu $ denotes 
the unit normal giving the direction of the flow; and $\Hb$, $H_\eta$ denote the mean curvature of 
$\Sigma_t$ with respect to $\bar g = f^2 d t^2 + g_t $, $ g_\eta = \eta^2 d t^2 + g_t $,  respectively,
where $ g_\eta $ is taken to have the same scalar curvature as $\bar g$.
(The idea of considering such a metric $g_\eta$ goes back to Bartnik \cite{Bartnik-93}.)
To apply this  monotonicity formula, in the next step we study  
a family of closed, star-shaped,  hypersurfaces $\{ \Sigma_t  \}$  in a spatial Schwarzschild manifold $\SM$,
given by  $ \Sigma_t = X (t, \mathbb{S}^n)$, where $ X: [0, \infty)  \times \mathbb{S}^n \rightarrow \SM $ 
is a smooth map  evolving  according to 
\be \label{eq-intro-flow-sigma}
\frac{\p X}{\p t} = \frac{n-1}{2n} \frac{\sigma_1}{\sigma_2} \nu .
\ee
We shows that, if the initial hypersurface $ \Sigma_0$ is $2$-convex with $ \bRic (\nu, \nu) \le 0 $, then 
\eqref{eq-intro-flow-sigma} admits a long time solution $\{ \Sigma_t \}_{ 0 \le t < \infty} $ so that  each 
$ \Sigma_t $ is $2$-convex and has positive scalar curvature. 
Writing the Schwarzschild background metric $\bar g$ on the exterior   region $\E$ of $ \Sigma_0$ as 
$ \bar g = f^2 dt^2 + g_t $, we then demonstrate that there exists a positive function $ \eta  $ on $ \E $ such that
$ g_\eta = \eta^2 d t^2 + g_t $ has zero scalar curvature, the mean curvature of $\Sigma_0$ in $(\E, g_\eta)$ equals $H$ 
which is  the mean curvature of $ \So $ in $(\Omega, \fg)$; and $(\E, g_\eta)$ is asymptotically flat with mass 
\be \label{eq-intro-gu-mass}
 \m (g_\eta) = m +  \lim_{t \rightarrow \infty} \frac{1}{n \omega_n} \int_{\Sigma_t} N ( \Hb - H_\eta ) \, d \sigma  . 
 \ee
 Finally, by gluing $(\Omega, \fg)$ and $ (\E, g_\eta)$ along $\So $ (which is identified with $\Sigma =  \Sigma_0$)
 to get an asymptotically flat manifold $(\hat M, \hat h)$, we conclude 
 \begin{equation} \label{eq-intro-app-RPI}
 \begin{split}
m + \frac{1}{n \omega_n} \int_{\So} N ( H_m - H) \, d \sigma  \ge  & \ 
m + \lim_{t \rightarrow \infty} \frac{1}{n \omega_n} \int_{\Sigma_t} N ( \Hb  - H_\eta) \, d \sigma \\
= & \ \m (g_\eta)  \ge   \frac12  \left(\frac{|\Sh|}{\omega_n}\right)^{\frac{n-1}{n}} ,
 \end{split} 
 \end{equation}
 where in the last step we 
 used the fact that the Riemannian Penrose inequality holds on such an $(\hat M, \hat h)$
 (see \cite{Miao09, MM}).
  
It is worth of mentioning that, similar to the fact that Shi-Tam's proof of Theorem \ref{thm-intro-ST}  
gives   an upper bound of the  Bartnik mass $\mB(\Sigma)$  \cite{Bartnik} for a $2$-surface $\Sigma$
that  is isometric to a convex surface in $ \R^3$  in terms of  its Brown-York mass, 
our proof of Theorem \ref{thm-intro-main} yields  
\be \label{eq-intro-B-mass}
\mB(\Sigma) \le   m +  \frac{1}{8 \pi} \int_{\Sigma} N ( H_m - H) \, d \sigma  
\ee
for a surface $\Sigma$  that is isometric to a convex  surface  with $ \bRic(\nu, \nu) \le 0 $ 
in an $ \mathbb{M}^3_m$ (see Theorem \ref{thm Bmass}).
Such an estimate on the Bartnik mass 
verifies  a special case of Conjecture 4.1 in \cite{Miao-ICCM-07}, which is formulated  
for a surface that admits an isometric embedding into a general static manifold.

This paper is organized as follows. In Section \ref{sec-mono-static}, we derive the monotonicity formula of the
weighted Brown-York mass type integral \eqref{eq-intro-w-BY} in a general static background. 
In Section \ref{sec-flow}, we study a family of inverse curvature flows in a spatial Schwarzschild manifold $\SM$, 
which includes \eqref{eq-intro-flow-sigma} as a special case. In Section \ref{sec-warped-metric}, 
we prove that a warped metric of the form $g_\eta = \eta^2 d t^2 + g_t$, with zero scalar curvature,  
exists  on the Schwarzschild exterior region $ \E$  swept out by the  solution  
$\{ \Sigma_t \}_{0 \le t \le \infty} $ to  \eqref{eq-intro-flow-sigma}, and show that $g_\eta$ is asymptotically flat and its  
mass  is given by \eqref{eq-intro-gu-mass}.  In Section \ref{sec apps}, we attach $(\E, g_\eta)$ to  $ (\Omega, \fg) $ along $ \So$
and apply the Riemannian Penrose inequality to prove Theorem \ref{thm-intro-main}. 
We also discuss  the implication of our work to the Bartnik mass.
We end the paper by  proving Theorem \ref{thm-intro-iso-emb} in Section \ref{sec-iso-emb}. 

\section{Monotonicity formula in a static background}  \label{sec-mono-static}

The Euclidean space $ \R^{n+1}$ and the spatial Schwarzschild manifolds $\SM$ both are examples 
of a static Riemannian manifold according to the following definition.

\begin{defi} [\cite{Corvino}] \label{def static}
A Riemannian  manifold $(\N,\bar{g})$ is called static if there exists a nontrivial function $N$ such that
\begin{align}\label{static equation}
(\bar{\Delta}N)\bar{g}-\bar{D}^2N+N\bar{Ric}=0,
\end{align}
where $\bar{Ric}$ is the Ricci curvature of $(\N, \bar g)$, $\bar{D}^2 N $ is the Hessian of $N$ 
and $\bar{\Delta}$ is the Laplacian of $N$. 
The function $N$ is called a static potential.  
\end{defi} 

Throughout this section, we let  $(\N, \bar g)$ denote a static Riemannian manifold with a  static potential $N$. 
The scalar curvature $\bar R$  of such an $(\N, \bg )$ is necessarily a constant  (see \cite[Proposition 2.3]{Corvino}). 
Consider  a  smooth family of embedded hypersurfaces $\{\Sigma_t \}$  evolving in $(\N, \bg)$ according to 
\begin{align}\label{formula for flow}
\frac{\partial X}{\partial t}=f\nu , 
\end{align}
where $X$ denotes points in $\Sigma_t$,  $f >0$ denotes  the speed  of the flow, and $\nu$ is a  unit normal to $\Sigma_t$.
Let  $\sigma_1$ and $ \sigma_2$ be the first and second elementary symmetric functions of the principal curvatures of $\Sigma_t$ in $(\N, \bar g)$,
respectively. In particular, $ \sigma_1 $ equals  the mean curvature of $\Sigma_t$.

The metric  $\bg $ over the region  $U$ swept by $\{ \Sigma_t \}$ can be written as
\begin{align}\label{metric by foliation}
\bar{g}=f^2dt^2+g_t ,
\end{align}
where $g_t$ is the induced metric of $\Sigma_t$.
Now  consider  another metric 
\begin{align}\label{perturbed metric}
g_\eta=\eta^2dt^2+g_t , 
\end{align}
where $\eta > 0 $ is a function on $U$. 
We impose the condition that the scalar curvature $R(g_\eta)$  of $g_\eta $ 
equals the scalar curvature  of $ \bar g$, i.e. 
\be
R(g_\eta) = \bar R . 
\ee

\begin{prop}  \label{prop-mono}
Under the above notations and assumptions, 
\begin{align*}
  \frac{d}{d t}\left( \int_{\Sigma_t} N ( \Hb - H_\eta ) d \sigma \right)= -  \int_{\Sigma_t} 
  \eta^{-1} ( \eta - f )^2  \Hb  \frac{\partial N}{\partial \nu} d \sigma  - \int_{\Sigma_t}   
  N \sigma_2 \eta^{-1}  (  \eta- f  )^2    d \sigma , 
\end{align*}
where $\Hb$ and $H_\eta$ are the mean curvature of $\Sigma_t$ with respect to $\bar{g}$ 
and $g_\eta$, respectively.
\end{prop}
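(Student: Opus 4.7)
The plan is to differentiate $\int_{\Sigma_t} N(\Hb - H_\eta)\,d\sigma$ directly and reduce the result using three ingredients: (a) the warped-product identities relating $\Ab$ and $A_\eta$ on the common induced metric $g_t$, (b) the common-scalar-curvature constraint $R(g_\eta) = \bar R$, and (c) the static equation for $N$.

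First I would record elementary warped-product identities. Because $\Sigma_t$ is the $t$-level set in both metrics with common induced metric $g_t$, the second fundamental forms are $\Ab = (2f)^{-1}\partial_t g_t$ and $A_\eta = (2\eta)^{-1}\partial_t g_t$, whence $H_\eta = (f/\eta)\Hb$, $|A_\eta|^2 = (f/\eta)^2|\Ab|^2$, and in particular $\Hb - H_\eta = \eta^{-1}(\eta-f)\Hb$. Since the $\bg$-unit normal is $\nu = f^{-1}\partial_t$, one also has $\partial_t N = f N_\nu$ and $\partial_t\,d\sigma = f\Hb\,d\sigma$. The classical warped-product formula
$$\bar R = R(g_t) - \Hb^2 - |\Ab|^2 - 2f^{-1}\partial_t \Hb - 2f^{-1}\Delta_{g_t} f,$$
applied to both $\bg$ and $g_\eta$ with the common value $\bar R$ (and using $H_\eta^2 + |A_\eta|^2 = (f/\eta)^2(\Hb^2 + |\Ab|^2)$), yields
$$\partial_t \Hb - \partial_t H_\eta = \Delta_{g_t}(\eta - f) - \tfrac{1}{2}(\eta - f)\bl R(g_t) - \bar R\br - \tfrac{1}{2}f\eta^{-1}(\eta - f)\bl \Hb^2 + |\Ab|^2 \br.$$

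Next I would integrate the Laplacian term by parts on the closed hypersurface $\Sigma_t$, converting $\int N\Delta_{g_t}(\eta-f)\,d\sigma$ into $\int(\eta-f)\Delta_{g_t}N\,d\sigma$, and compute $\Delta_{g_t}N$ via the static equation. Tracing \eqref{static equation} gives $\bar\Delta N = -\bar R N/n$, pairing it with $(\nu,\nu)$ gives $\bar D^2 N(\nu,\nu) = \bar\Delta N + N\bRic(\nu,\nu)$, and combining these with the standard identity $\Delta_{g_t} N = \bar\Delta N - \bar D^2 N(\nu,\nu) - \Hb N_\nu$ produces $\Delta_{g_t}N = -N\bRic(\nu,\nu) - \Hb N_\nu$. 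The Gauss equation $2\bRic(\nu,\nu) = \bar R - R(g_t) + \Hb^2 - |\Ab|^2 = \bar R - R(g_t) + 2\sigma_2$ then gives
$$\Delta_{g_t}N = -\tfrac{N}{2}\bl\bar R - R(g_t)\br - N\sigma_2 - \Hb N_\nu.$$

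Finally I would collect all contributions to $\frac{d}{dt}\int_{\Sigma_t}N(\Hb - H_\eta)\,d\sigma$, namely the term from $\partial_t N$, the term $N(\partial_t\Hb - \partial_t H_\eta)$ (after integration by parts), and the area-element term $fN\Hb(\Hb - H_\eta)$. The two contributions proportional to $R(g_t) - \bar R$ cancel exactly; the area-element contribution $fN\Hb^2\eta^{-1}(\eta - f)$ cancels the $\Hb^2$ half of the term $-\tfrac{1}{2}Nf\eta^{-1}(\eta - f)(\Hb^2 + |\Ab|^2)$ after rewriting $\Hb^2 + |\Ab|^2 = 2\Hb^2 - 2\sigma_2$; and the surviving $\sigma_2$ and $N_\nu$ pieces combine via the algebraic identity $(\eta - f) - f\eta^{-1}(\eta - f) = \eta^{-1}(\eta - f)^2$ to produce exactly the two right-hand side terms of the claim. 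The principal obstacle is purely the bookkeeping of cancellations; the conceptual point is that the static equation is precisely what converts a mixture of $R(g_t) - \bar R$, $\sigma_2$, and $N_\nu$ terms into the clean $\eta^{-1}(\eta - f)^2$ structure, after which monotonicity becomes manifest once one knows the signs of $\sigma_2$ and $\partial N/\partial\nu$.
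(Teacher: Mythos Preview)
Your proposal is correct and is essentially the paper's own argument, reorganized only cosmetically. The paper computes $\partial_t(\Hb-H_\eta)$ via the second variation formula in terms of $\bRic(\nu,\nu)$ and $\sigma_2$ and then uses the static equation as $\Delta N + N\bRic(\nu,\nu) = -\Hb\,\partial N/\partial\nu$; you instead package the second variation and Gauss equations together as the scalar-curvature identity for warped products, carry the combination $R(g_t)-\bar R$ through, and only convert to $\sigma_2$ and $\bRic(\nu,\nu)$ when handling $\Delta_{g_t}N$ --- but the ingredients, the integration by parts, and the final algebraic identity $(\eta-f)-f\eta^{-1}(\eta-f)=\eta^{-1}(\eta-f)^2$ are identical.
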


\begin{proof}
Denote $\Ab$ and $A_\eta$ the second fundamental form of $\Sigma_t$ with respect to  
$\bar{g}$ and $g_\eta$,  respectively. By  \eqref{metric by foliation} and \eqref{perturbed metric}, 
\begin{align}\label{formula H,H_eta}
H_\eta=\eta^{-1}f \Hb, \quad A_\eta=\eta^{-1} f \Ab .
\end{align}
By the second variation formula, 
\begin{align}\label{H}
\frac{\partial}{\partial t} \Hb =  - \Delta f - f( | \Ab |^2 + \bar{Ric}(\nu, \nu)) 
\end{align}
and
\begin{align}\label{H_eta}
\frac{\partial}{\partial t}  H_\eta =  - \Delta \eta - \eta ( | A_\eta |^2 + Ric_{g_\eta} (\nu, \nu)) ,
\end{align}
where $\Delta$ is the Laplacian operator on $(\Sigma_t, g_t)$ and $Ric_{g_\eta}$ is the Ricci curvature of  $g_\eta$.

Let $ R$ denote  the scalar curvature of $(\Sigma_t, g_t)$. 
Let $ {\sigma_2}_\eta $ be the second elementary symmetric functions of the principal curvatures of $\Sigma_t$ in $(\N, g_\eta)$. 
By Gauss equation,
\begin{align}\label{Gauss-1}
\sigma_2=\frac{R-\bar{R}}{2}+\bar{Ric}(\nu,\nu),\quad {\sigma_2}_\eta=\frac{R-\bar{R}}{2}+Ric_{g_\eta}(\nu,\nu) .
\end{align}
Together with (\ref{formula H,H_eta}), we have
\begin{equation}\label{Ric-1}
\begin{split}
Ric_{g_\eta}(\nu,\nu) = & \bar{Ric}(\nu,\nu)+{\sigma_2}_\eta-\sigma_2 \\
= & \bar{Ric}(\nu,\nu)+\sigma_2(\eta^{-2}f^2-1) . 
\end{split}
\end{equation}
Putting  (\ref{H}), (\ref{H_eta}) and (\ref{Ric-1}) together, we have
\begin{align*}
\frac{\partial}{\partial t} ( \Hb  - H_\eta ) = & \Delta ( \eta - f ) - f( | \Ab |^2 + \bar{Ric} (\nu, \nu)) + \eta ( | A_\eta |^2 + Ric_{g_\eta} (\nu, \nu)) \\
=& \Delta ( \eta - f )+\bar{Ric} (\nu, \nu) (\eta-f) + | \Ab |^2(\eta^{-1} f^2-f)++\sigma_2(\eta^{-1}f^2-\eta) .
\end{align*}
Using the formula $\frac{\partial}{\partial t}d \sigma=fHd \sigma$, (\ref{formula H,H_eta}) and integrating  by part, we thus have
\begin{align*}
\frac{d}{dt}\left(\int_{\Sigma_t} N( \Hb  - H_\eta ) d \sigma\right) =&\int_{\Sigma_t}f\frac{\partial N}{\partial \nu} \Hb 
(1-\eta^{-1}f)d \sigma+\int_{\Sigma_t} N \Hb (1-\eta^{-1}f) f \Hb d \sigma\\
&+\int_{\Sigma_t}\left(\Delta N ( \eta - f ) +N \bar{Ric}(\nu, \nu)(\eta-f)\right) d \sigma \\
&+\int_{\Sigma_t}\left( N |\Ab|^2(\eta^{-1}f^2-f)+N\sigma_2(\eta^{-1}f^2-\eta)\right) d \sigma \\
=&\int_{\Sigma_t}(\eta-f)\left(\Delta N  + N \bar{Ric} (\nu, \nu)+ \eta^{-1} f \Hb \frac{\partial N}{\partial \nu}\right) d \sigma \\
&+\int_{\Sigma_t}N\sigma_2\left(2(f-\eta^{-1}f^2)+\eta^{-1}f^2-\eta\right) d \sigma\\
=&\int_{\Sigma_t}(\eta-f)\left(\Delta N  + N \bar{Ric} (\nu, \nu)+ \eta^{-1} f \Hb \frac{\partial N}{\partial \nu}\right) d \sigma \\
&-\int_{\Sigma_t}N\sigma_2\eta^{-1}(\eta-f)^2d \sigma .
\end{align*}
The static equation \eqref{static equation} implies 
\begin{align*}
\Delta N+N\bar{Ric}(\nu, \nu)= \bar{\Delta}N-\bar{D}^2N(\nu,\nu)- \Hb \frac{\partial N}{\partial \nu}+ N \bar{Ric}(\nu, \nu)
= - \Hb \frac{\partial N}{\partial \nu} .
\end{align*}
Therefore, we conclude 
\begin{align*}
\frac{d}{dt}\left(\int_{\Sigma_t} N( \Hb  - H_\eta ) d \sigma\right) =&\int_{\Sigma_t}(\eta-f)(-1+ \eta^{-1}f) \Hb \frac{\partial N}{\partial \nu} d \sigma 
- \int_{\Sigma_t}N\sigma_2\eta^{-1}(\eta-f)^2d \sigma\\
=&-\int_{\Sigma_t}\eta^{-1}(\eta-f)^2 \Hb \frac{\partial N}{\partial \nu} d \sigma -\int_{\Sigma_t}N\sigma_2\eta^{-1}(\eta-f)^2d \sigma .
\end{align*}
\end{proof}

\begin{coro}\label{cor-1}
Suppose $(\N, \bg)$ has a positive static potential $N$. Along $ \{ \Sigma_t \}$, suppose 
\be \label{eq-monotone}
 \frac{\partial N}{\partial \nu} >  0 \ \mathrm{and} \  \sigma_i >  0 , \ i = 1, 2. 
\ee
Then  $ \displaystyle \int_{\Sigma_t} N ( \Hb - H_\eta) d \sigma $ is monotone nonincreasing 
and it is a constant if and only if $ \eta = f $.
\end{coro}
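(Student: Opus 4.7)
The plan is to deduce this corollary directly from the monotonicity formula established in Proposition \ref{prop-mono}. That proposition gives the closed-form expression
\[
\frac{d}{dt}\left( \int_{\Sigma_t} N (\Hb - H_\eta)\, d\sigma \right) = - \int_{\Sigma_t} \eta^{-1} (\eta - f)^2 \Hb \frac{\partial N}{\partial \nu}\, d\sigma - \int_{\Sigma_t} N \sigma_2 \eta^{-1} (\eta - f)^2\, d\sigma,
\]
and the entire corollary is a matter of reading off the signs of the two integrands.

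First I would observe that under the hypotheses the integrands on the right are pointwise nonnegative. Indeed, $\eta > 0$ and $(\eta - f)^2 \geq 0$ by construction, $N > 0$ by assumption, $\Hb = \sigma_1 > 0$ and $\sigma_2 > 0$ by \eqref{eq-monotone}, and $\frac{\partial N}{\partial \nu} > 0$ also by \eqref{eq-monotone}. Hence each of the two terms in Proposition \ref{prop-mono} is nonpositive, and therefore
\[
\frac{d}{dt}\left( \int_{\Sigma_t} N (\Hb - H_\eta)\, d\sigma \right) \leq 0,
\]
which is exactly the monotonicity claim.

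For the equality characterization, I would argue as follows. If the integral is constant on some subinterval, its derivative vanishes there, so both nonpositive terms vanish simultaneously. Because $N$, $\Hb$, $\sigma_2$, $\eta^{-1}$, and $\frac{\partial N}{\partial \nu}$ are all strictly positive on $\Sigma_t$, the vanishing of either integrand forces $(\eta - f)^2 \equiv 0$ on $\Sigma_t$, i.e.\ $\eta = f$. Conversely, if $\eta = f$ along the flow, then both integrands are identically zero, so the derivative vanishes and the integral is constant. There is no real obstacle here: the work has already been done in Proposition \ref{prop-mono}, and the corollary is simply the observation that the sign hypotheses \eqref{eq-monotone} and positivity of $N$ make the right-hand side of the monotonicity formula manifestly nonpositive, with strict negativity wherever $\eta \neq f$.
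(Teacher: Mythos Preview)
Your proposal is correct and matches the paper's approach: the paper states this corollary immediately after Proposition \ref{prop-mono} with no separate proof, treating it as a direct consequence of the sign analysis you have written out. Your argument is exactly the intended one.
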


\section{Inverse curvature flows  in Schwarzschild manifolds} \label{sec-flow}

Corollary \ref{cor-1} suggests one consider foliations $\{ \Sigma_t \}$ satisfying condition \eqref{eq-monotone} in a static manifold 
with a positive static potential.  In this section, we use an inverse curvature flow to construct such foliations  in the 
Schwarzschild manifold $\SM$. 

We begin by fixing  some notations.  Henceforth, we will always use $\bg $ to  denote the metric on $ \SM$. 
We write 
\begin{align} \label{equation SM}
(\SM, \bar{g}) =( [ 0 , \infty) \times \mathbb{S}^n, dr^2+\phi^2(r)\sigma ), 
\end{align}
where $\sigma$ is the standard metric on the unit $n$-sphere $\mathbb{S}^n$ and $\phi = \phi(r) > 0 $ satisfies
$ \phi(0) = \left( 2 m \right)^\frac{1}{n-1} $ and 
\begin{align} \label{equation phi}
\phi^\prime=\sqrt{1 - 2m \phi^{1-n}}. 
\end{align}
In terms of this coordinate $r$,  the static potential function $N$  in Theorem \ref{thm-intro-main} equals $ \phi'$.  
We  use  $ \bar R(\cdot, \cdot, \cdot, \cdot)$, $ \bRic (\cdot, \cdot)$ to denote   the curvature tensor, 
the Ricci curvature of $\bg$, respectively. 
The  scalar curvature $\bar R$ of $ \bg$ is identically zero. 
 
Given any integer $ 1 \le k \le n $,  the Garding's  cone $\Gamma_k \subset \mathbb{R}^n$ is defined by  
$$\Gamma_k=\{(\kappa_1, \ldots, \kappa_n )\in \mathbb{R}^n \ | \ \sigma_j >0, \ 1  \leq j\leq k\} , $$
where $\sigma_j$ is the $j$-th elementary symmetric function of $(\kappa_1, \ldots, \kappa_n)$.
We also define $\sigma_0 = 1 $. A hypersurface $\Sigma \subset \SM$ is called $k$-convex if 
its principal curvature $(\kappa_1, \ldots, \kappa_n) \in \Gamma_k$. 

\begin{theo}\label{thm1}
Let $\Sigma^n_0$ be a star-shaped, $k$-convex, closed hypersurface in  $\SM$. Consider 
a smooth family of hypersurfaces $\{ \Sigma_t \}_{ t \ge0 }$ 
evolving according to  
\begin{align} \label{eq-k-flow}
\frac{\p X}{\p t} {}=\frac{\nu}{F},
\end{align}
where $\nu$ is the outward unit normal and $F=n\frac{C^{k-1}_n}{C^k_n}\frac{\sigma_k}{\sigma_{k-1}} > 0$
 which is evaluated at the principal curvatures of $\Sigma_t$. 
 Then  \eqref{eq-k-flow} has a smooth solution that exists for all time, each $ \Sigma_t$ remains star-shaped, 
 and the second fundamental form $ h$ of $\Sigma_t$ satisfies
\begin{align*}
|h^i_j\phi-\delta^i_j|\leq Ce^{-\alpha t},
\end{align*}
where $\phi$ is evaluated at $\Sigma_t$ and $C,\alpha$ depends only on $\Sigma_0,n,k$.
\end{theo}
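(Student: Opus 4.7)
The plan is to follow the graphical reduction for inverse curvature flows on warped product manifolds, pioneered by Gerhardt and Urbas. Since $\Sigma_0$ is star-shaped, I parameterize it (and every subsequent $\Sigma_t$, as long as star-shapedness persists) as a graph $\Sigma_t = \{(r(t,\theta),\theta) : \theta \in \mathbb{S}^n\}$, and pass to the radial variable $\varphi(r) = \int_0^r \phi(s)^{-1}\,ds$. Equation~\eqref{eq-k-flow} then reduces to a quasilinear parabolic scalar PDE for $\varphi(t,\theta)$, whose coefficients depend on $\phi$ through~\eqref{equation phi}; short-time existence is then standard parabolic theory.

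For the $C^0$ estimate, radially symmetric coordinate spheres evolve under~\eqref{eq-k-flow} via an autonomous ODE whose solutions $\rho(t)\to\infty$ sweep out $\SM$, so the scalar maximum principle sandwiches $r(t,\theta)$ between two such radial profiles. For the $C^1$ bound and preservation of star-shapedness, I apply the maximum principle to the angle function $v=\phi/\langle \phi\,\partial_r,\nu\rangle$, whose evolution is obtained by differentiating the flow; the warped-product structure of $\SM$ ensures the bad terms are controlled. Preservation of $k$-convexity follows from Andrews' tensor maximum principle applied to the evolution of $h^i_{\,j}$, using the fact that the speed $F=n\binom{n}{k-1}/\binom{n}{k}\cdot\sigma_k/\sigma_{k-1}$ is concave and inverse-concave on $\Gamma_k$. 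For the $C^2$ bound, a Urbas-type application of the maximum principle to the evolution equation of $F$ itself yields two-sided bounds $0<c\leq F\leq C$, hence uniform parabolicity; Krylov--Safonov together with Schauder then provide $C^{k,\alpha}$ estimates for all $k$, and the flow persists smoothly for all time.

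For the exponential convergence, note that on a coordinate sphere one has $h^i_{\,j}=(\phi'/\phi)\delta^i_{\,j}$, so $\phi\, h^i_{\,j}\to\delta^i_{\,j}$ on the round profiles at infinity. Setting $\Phi^i_{\,j} := \phi\, h^i_{\,j}-\delta^i_{\,j}$, I derive its evolution from the evolutions of $h^i_{\,j}$ and $\phi$ along the flow; the crucial simplification comes from the static equation $(\bar\Delta\phi')\bg - \bar D^2\phi' + \phi'\bRic = 0$ (with $N=\phi'$), which cancels the leading ambient curvature terms and produces an inequality of the schematic form $(\partial_t-\mathcal{L})|\Phi|^2 \leq -2\alpha|\Phi|^2 + (\text{good lower-order terms})$ for some $\alpha=\alpha(n,k)>0$. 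Applying the parabolic maximum principle to $e^{2\alpha t}|\Phi|^2$ then yields the claimed estimate. The main obstacle is precisely this last step: the $C^0$--$C^2$ theory is by now routine, but extracting a sharp exponential decay rate requires a careful computation of the linearization of $F$ at the spherical profile and exploitation of the static identity for $N=\phi'$ (rather than just the explicit ODE~\eqref{equation phi} for $\phi$) to secure the strictly negative coefficient $-2\alpha$.
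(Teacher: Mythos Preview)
Your outline for short-time existence, the $C^0$ barrier, and the $C^1$ bound is essentially the paper's, but two later steps have genuine problems.

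\textbf{The $C^2$ step.} You claim that the maximum principle applied to $F$ yields $0<c\le F\le C$ and hence uniform parabolicity. This is false as stated: since $\Sigma_t$ expands, on large leaves $F\sim n\phi'/\phi\sim n/\phi\to 0$, so $F$ is not bounded below. What one must bound two-sidedly is the \emph{rescaled} quantity $F\phi$ (the paper does the upper bound by a maximum-principle argument on $\log(F\phi)$, and the lower bound by first bounding $|\dot\varphi|$). Moreover, two-sided control of $F\phi$ alone does \emph{not} yield uniform ellipticity of $F^{ij}$: for $\sigma_k/\sigma_{k-1}$ with $k\ge 2$ the coefficient matrix degenerates if some principal curvature blows up relative to the others. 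The paper closes this gap with a separate maximum-principle argument on $\log h^1_1-\log u+\tfrac{2t}{n}$ to obtain $|\kappa_i\phi|\le C$; only then is Evans--Krylov applicable. Your outline skips this step entirely.

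\textbf{The exponential decay.} Your proposed route---deriving an evolution inequality $(\partial_t-\mathcal L)|\Phi|^2\le -2\alpha|\Phi|^2+\ldots$ for $\Phi^i_{\,j}=\phi h^i_{\,j}-\delta^i_{\,j}$ and invoking the static identity for $N=\phi'$---is not how the paper proceeds, and the static equation plays no role in this section. The paper's mechanism is simpler and more robust: once $F^{ij}$ is uniformly elliptic and $F\phi$ is bounded above, the $C^1$ argument (maximum principle on $\tfrac12|\nabla\varphi|^2$) bootstraps to give $|\nabla\varphi|\le Ce^{-\alpha t}$. Since the graphical formula reads $\phi h^i_{\,j}=\tfrac{\phi'}{v}\delta^i_{\,j}-\tfrac{1}{v}\tilde\sigma^{ik}\varphi_{kj}$, the decay of $\phi h^i_{\,j}-\delta^i_{\,j}$ follows once $\nabla^2\varphi$ also decays exponentially; this is obtained by interpolating the exponential $C^1$ decay against the uniform $C^{2,\alpha}$ bound coming from Evans--Krylov. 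So the ``main obstacle'' you identify is not where the work lies; the work is in the curvature bound $|\kappa_i\phi|\le C$ and the bootstrap of the gradient decay, not in a delicate linearization using the static potential.
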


We remark that 
inverse curvature flows in Euclidean spaces were first studied by Gerhardt \cite{G1} and Urbas \cite{U}.  They considered the  flow equation \eqref{eq-k-flow}
where $F$ is a concave function of homogeneous degree one, evaluated at the principal curvature, and  proved that the solution exists for all time and the normalized flow converges to a round sphere if the initial hypersurface is suitably star-shaped. 
For flows in other space forms, Gerhardt \cite{G2,G3} proved the solution exists for all time and the second fundamental form converges 
(see also earlier work by Ding \cite{D}). Recently, Brendle-Hung-Wang \cite{BHW} and Scheuer \cite{S2} proved that the same results hold in anti-de Sitter-Schwarzschild manifold and a class of warped product manifolds  for the inverse mean curvature flow, i.e. $F = \sigma_1$.
However, as pointed out by Neves \cite{N} and Hung-Wang \cite{HW}, for the inverse mean curvature flow, the rescaled limiting hypersurface is not necessarily 
a round sphere in an anti-de Sitter-Schwarzschild manifold.
The case of $F=n\frac{C^{k-1}_n}{C^k_n}\frac{\sigma_k}{\sigma_{k-1}}$  in anti-de Sitter-Schwarzschild manifolds 
was analyzed   by Lu \cite{Lu} and Chen-Mao \cite{CM} independently. They proved that 
the flow  exists for all time and the second fundamental converges exponentially fast if the initial hypersurface is 
star-shaped and $k$-convex.  

\medskip 

In what follows, we prove Theorem \ref{thm1} following the steps in \cite{Lu}.
We divide the proof into a few subsections. 

\subsection{Basic formulae}
We collect some well-known formulae  in Schwarzschild manifold in this subsection. 
Given a hypersurface $\Sigma^n \subset \SM$, we always use  $g$ to denote the induced metric on $\Sigma$.
Define 
\begin{align*}
\Phi(r)=\int_0^r\phi(\rho)d\rho, \quad u=\left\langle \phi \frac{\partial}{\partial r},\nu\right\rangle, 
\end{align*}
where $\nu$ is the outer unit normal of $\Sigma$ and $\langle \cdot, \cdot \rangle$ also denotes the metric product on $\SM$.
Let $i, j .. \in \{ 1 , \dots, n \}$ denote indices of local coordinates on $\Sigma$. 
Let $h$ be the second fundamental form on$ \Sigma$. 

The following formula is well-known (see  \cite{GL2} for instance), 
\begin{equation} \label{eq-Hess-Phi}
\Phi_{;ij} =\phi^\prime g_{ij}-h_{ij}u,
\end{equation}
where $``  ; " $ denotes the covariant differentiation on $\Sigma$.

Let $ R(\cdot, \cdot, \cdot, \cdot)$ be the curvature tensor of $ g$ on $\Sigma$. 
The Gauss equation and Codazzi equation are
\begin{align}\label{Gauss}
R_{ijkl}=\bar{R}_{ijkl}+\left(h_{ik}h_{jl}-h_{il}h_{jk}\right)
\end{align}
\begin{align}\label{Codazzi}
\nabla_k h_{ij}-\nabla_j h_{ik}=\bar{R}_{\nu ijk},
\end{align}
and the interchanging formula is
\begin{align}\label{interchanging formula}
\nabla_i\nabla_jh_{kl}=&\nabla_k\nabla_lh_{ij}-h^p_{l}(h_{ip}h_{kj}-h_{ij}h_{pk})-h^p_{j}(h_{pi}h_{kl}-h_{il}h_{pk}) \\\nonumber
&+h^p_{l}\bar{R}_{ikjp}+h^p_{j}\bar{R}_{iklp}+\nabla_k\bar{R}_{ijl\nu}+\nabla_i\bar{R}_{jkl\nu}.
\end{align}
Here $ \nabla$ is another notation for  the covariant differentiation on $\Sigma$.

The function $u$ is known as  the support function of $\Sigma$.  We have (see in \cite{Lu})
\begin{lemm}\label{support function}
\begin{align*}
\nabla_i u &=g^{kl}h_{ik}\nabla_l \Phi,\\
\nabla_i\nabla_j u&=g^{kl}\nabla_k h_{ij}\nabla_l\Phi+\phi^\prime h_{ij}-(h^2)_{ij}u+g^{kl}\nabla_l\Phi\bar{R}_{\nu jki},
\end{align*}
where $(h^2)_{ij}=g^{kl}h_{ik}h_{jl}$, $\bar{R}_{\nu jki}$ is the curvature of ambient space.
\end{lemm}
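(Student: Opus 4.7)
The plan is to derive both identities by treating the support function as $u = \langle \bar\nabla\Phi,\nu\rangle$ and differentiating via the ambient Levi-Civita connection. The key preliminary observation is that on the warped product $\bar g = dr^2 + \phi^2\sigma$ one has $\bar\nabla\Phi = \phi\,\partial_r$ and, more importantly, the \emph{conformal} identity
\begin{equation*}
\bar D^2\Phi = \phi'\,\bar g .
\end{equation*}
This can be checked componentwise in the foliation: the $(\partial_r,\partial_r)$ component equals $\phi'$, the mixed components vanish because $\bar\nabla_{\partial_r}X = (\phi'/\phi)X$ is tangent to the sphere factors, and the spherical components use the warped-product formula $\bar\nabla_X Y = \nabla^\sigma_X Y - \phi\phi'\sigma(X,Y)\partial_r$ to produce $\phi'\bar g(X,Y)$. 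Formula \eqref{eq-Hess-Phi} is then just the tangential restriction of this identity combined with the Gauss formula.

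For the first identity, I would differentiate $u=\langle\bar\nabla\Phi,\nu\rangle$:
\begin{equation*}
\nabla_i u = \bar D^2\Phi(e_i,\nu) + \langle \bar\nabla\Phi,\bar\nabla_{e_i}\nu\rangle .
\end{equation*}
The first term vanishes since $\bar D^2\Phi = \phi'\bar g$ and $e_i\perp\nu$. The Weingarten relation $\bar\nabla_{e_i}\nu = h_i^{\,k}e_k$ (using the paper's convention that makes principal curvatures of round spheres positive with outward normal) reduces the second term to $h_i^{\,k}\nabla_k\Phi = g^{kl}h_{ik}\nabla_l\Phi$, which is exactly the claim.

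For the second identity, I would differentiate the first:
\begin{equation*}
\nabla_i\nabla_j u = g^{pq}(\nabla_i h_{jp})\nabla_q\Phi + g^{pq}h_{jp}\,\Phi_{;iq} .
\end{equation*}
Substituting $\Phi_{;iq} = \phi' g_{iq} - h_{iq}u$ from \eqref{eq-Hess-Phi} immediately produces the terms $\phi' h_{ij}$ and $-(h^2)_{ij}u$. It remains to rewrite $\nabla_i h_{jp}$. Apply the Codazzi equation \eqref{Codazzi} with the index substitution $(k,i,j)\mapsto(i,j,p)$, yielding
\begin{equation*}
\nabla_i h_{jp} - \nabla_p h_{ij} = \bar R_{\nu j p i} ,
\end{equation*}
and contract against $g^{pq}\nabla_q\Phi$ to recover the stated expression $g^{kl}\nabla_k h_{ij}\nabla_l\Phi + g^{kl}\nabla_l\Phi\,\bar R_{\nu jki}$.

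The step that requires the most care is the bookkeeping of signs and index placements: the sign in Weingarten depends on the chosen convention for $h$, and the curvature term in the second identity is obtained only after a specific relabeling in Codazzi, so one has to make sure the final $\bar R_{\nu jki}$ (rather than some permutation) appears. Otherwise the proof is a compact calculation that exploits just three structural facts about Schwarzschild, namely the warped-product form, the conformal Hessian identity $\bar D^2\Phi=\phi'\bar g$, and the Codazzi relation for the hypersurface $\Sigma$.
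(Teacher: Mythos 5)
Your proof is correct, and it fills in what the paper itself omits: the authors simply cite \cite{Lu} for this lemma without reproducing an argument. Your calculation is the natural and essentially canonical one. The three ingredients you isolate --- the conformal Hessian identity $\bar D^2\Phi=\phi'\bar g$ on a warped product, the decomposition $\bar\nabla\Phi = \nabla\Phi + u\nu$ together with the Weingarten relation, and the Codazzi relabeling --- are exactly what is needed, and your index bookkeeping is consistent with the paper's sign conventions (the paper's $h$ satisfies $\bar\nabla_{e_i}\nu = h_i^{\,k}e_k$, which makes $\langle\bar\nabla_{e_i}e_j,\nu\rangle = -h_{ij}$ and yields the identity $\Phi_{;ij}=\phi'g_{ij}-h_{ij}u$ of \eqref{eq-Hess-Phi} as the tangential trace of $\bar D^2\Phi=\phi'\bar g$, as you note). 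In the first identity the ambient Hessian term $\bar D^2\Phi(e_i,\nu)$ indeed vanishes, and the term $u\langle\nu,\bar\nabla_{e_i}\nu\rangle=0$ is implicitly used; you might spell that out. For the second identity, your relabeling of Codazzi $(k,i,j)\mapsto(i,j,p)$ gives $\nabla_i h_{jp}-\nabla_p h_{ij}=\bar R_{\nu jpi}$, and contracting with $g^{pq}\nabla_q\Phi$ returns precisely the stated $g^{kl}\nabla_l\Phi\,\bar R_{\nu jki}$ after the trivial index renaming, so the curvature term comes out with the right order of indices. No gaps.
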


As for the curvature, we have the following curvature estimates, for proof, we refer readers to \cite{BHW}.

\begin{lemm}\label{ADS curvature}
The sectional curvature satisfies 
\begin{align*}
&\bar{R}(\partial_i,\partial_j,\partial_k,\partial_l)=\phi^2\left(1-{\phi^\prime}^2\right)(\sigma_{ik}\sigma_{jl}-\sigma_{il}\sigma_{jk})\\
&\bar{R}(\partial_i,\partial_r,\partial_j,\partial_r)=-\phi\phi^{\prime\prime}\sigma_{ij} .
\end{align*}
Together with \eqref{equation phi}, this gives 
\begin{align*}
&\bar{R}(\partial_i,\partial_j,\partial_k,\partial_l)= 2m\phi^{3-n}(\sigma_{ik}\sigma_{jl}-\sigma_{il}\sigma_{jk})\\
&\bar{R}(\partial_i,\partial_r,\partial_j,\partial_r)=- {m(n-1)} \phi^{1-n}\sigma_{ij} ,
\end{align*}
thus
\begin{align*}
\bar{R}_{\alpha\beta\gamma\mu}=O(r^{-n-1}),\quad \bar{\nabla}_\rho \bar{R}_{\alpha\beta\gamma\mu}=O(r^{-n-1}) .
\end{align*}
Here $ \{ \partial_i \}$ is the coordinate  frame on $\mathbb{S}^n$, $\sigma_{ij}$ is the standard metric of $\mathbb{S}^n$, and $\{ e_\alpha \} $ 
denotes  an orthonormal frame on $\SM$.
\end{lemm}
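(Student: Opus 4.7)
The plan is to compute the Riemann tensor of the warped-product metric $\bar g = dr^2 + \phi(r)^2 \sigma$ directly, then substitute the explicit relation \eqref{equation phi} to simplify. The proof breaks naturally into three short steps: derive the two sectional-curvature identities from the warped-product structure, substitute the formula for $\phi'$ (and its $r$-derivative) to obtain the explicit expressions in terms of $m$ and $\phi$, and finally read off the asymptotic decay using $\phi(r) \sim r$ at infinity.

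For the first step, I would treat each slice $\Sigma_r = \{r\} \times \mathbb{S}^n$ as a totally umbilic hypersurface in $(\SM, \bar g)$ with unit normal $\partial_r$. A direct computation of the relevant Christoffel symbol gives $\bar \Gamma^r_{ij} = -\phi \phi' \sigma_{ij}$, from which the second fundamental form of $\Sigma_r$ is $h_{ij} = \pm \phi \phi' \sigma_{ij}$ (the sign being immaterial for what follows). Since $(\Sigma_r, \phi^2 \sigma)$ is a round sphere of radius $\phi$ with intrinsic Riemann tensor $\phi^2(\sigma_{ik}\sigma_{jl} - \sigma_{il}\sigma_{jk})$, the Gauss equation $R^{\phi^2\sigma}_{ijkl} = \bar R_{ijkl} + h_{ik}h_{jl} - h_{il}h_{jk}$ produces the first identity after cancelling the contribution $h_{ik}h_{jl} - h_{il}h_{jk} = \phi^2(\phi')^2(\sigma_{ik}\sigma_{jl}-\sigma_{il}\sigma_{jk})$. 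For the radial component I would invoke the standard warped-product identity $\bar R(X, \partial_r)\partial_r = -(\phi''/\phi)X$ for $X$ tangent to the fiber, which is a quick consequence of $\bar \nabla_{\partial_i} \partial_r = (\phi'/\phi)\partial_i$; pairing with $\partial_j$ and using $\bar g(\partial_i,\partial_j) = \phi^2 \sigma_{ij}$ yields $\bar R(\partial_i,\partial_r,\partial_j,\partial_r) = -\phi\phi''\sigma_{ij}$.

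The second step is purely algebraic. Squaring \eqref{equation phi} gives $1-(\phi')^2 = 2m\phi^{1-n}$, so the tangential component equals $2m\phi^{3-n}(\sigma_{ik}\sigma_{jl}-\sigma_{il}\sigma_{jk})$. Differentiating $(\phi')^2 = 1 - 2m\phi^{1-n}$ in $r$ and dividing by $2\phi'$ yields $\phi'' = m(n-1)\phi^{-n}$, hence $-\phi\phi'' = -m(n-1)\phi^{1-n}$, which is the second desired expression.

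For the asymptotic estimates, note that $\phi'(r) \to 1$ as $r \to \infty$, so $\phi(r) = r(1 + o(1))$. In an orthonormal frame adapted to the warped product the two independent sectional curvatures are $(1-(\phi')^2)/\phi^2 = 2m\phi^{-n-1}$ and $-\phi''/\phi = -m(n-1)\phi^{-n-1}$, both of which are $O(r^{-n-1})$; all orthonormal-frame components of $\bar R$ are algebraic combinations of these, hence also $O(r^{-n-1})$. For the covariant derivative, rotational symmetry forces angular derivatives of orthonormal-frame curvature components to vanish, radial differentiation of $\phi^{-n-1}$ only improves the decay to $O(r^{-n-2})$, and the relevant Christoffel symbols in the orthonormal frame are $O(r^{-1})$, so $\bar\nabla_\rho \bar R_{\alpha\beta\gamma\mu} = O(r^{-n-1})$ as well. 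I do not foresee any real obstacle: the lemma is a routine warped-product calculation, with the only subtlety being sign bookkeeping in the Gauss equation.
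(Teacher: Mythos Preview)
Your proposal is correct and complete. The paper itself does not prove this lemma at all: it simply refers the reader to \cite{BHW} for the curvature estimates. Your direct computation via the Gauss equation on the umbilic slices $\{r\}\times\mathbb{S}^n$ together with the standard warped-product identity $\bar R(X,\partial_r)\partial_r=-(\phi''/\phi)X$ is exactly the routine calculation one expects, and your algebraic substitution of \eqref{equation phi} and the asymptotic analysis are accurate. In fact your decay argument shows the slightly stronger bound $\bar\nabla_\rho\bar R_{\alpha\beta\gamma\mu}=O(r^{-n-2})$, which of course implies the stated $O(r^{-n-1})$.
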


We also need the following two lemmas regarding to $\sigma_k$, see in \cite{Lu} for detailed proof.
\begin{lemm}\label{sigma_k 1}
let $F=n\frac{C^{k-1}_n}{C^k_n}\frac{ \sigma_k}{\sigma_{k-1}}$, thus $F$ is of homogeneous degree $1$, and $F(I)=n$, then we have
\begin{align*}
\sum_i F^{ii}\lambda_i^2\geq \frac{F^2}{n}
\end{align*}
\end{lemm}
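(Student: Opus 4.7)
The plan is to reduce the inequality to the classical Newton--Maclaurin inequality $p_k^2 \geq p_{k-1}p_{k+1}$, where $p_j := \sigma_j/C_n^j$, via a direct manipulation using standard identities for elementary symmetric polynomials. There is no need to invoke concavity of $F$ or a Cauchy--Schwarz bound (in fact, on $\Gamma_k$ one has $\sum_i F^{ii} \geq n$, so a naive Cauchy--Schwarz on $F = \sum_i F^{ii}\lambda_i$ points in the wrong direction).

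Setting $c := nC_n^{k-1}/C_n^k = \frac{nk}{n-k+1}$ so that $F = c\,\sigma_k/\sigma_{k-1}$, the quotient rule yields
$$F^{ii} = c\,\frac{\sigma_{k-1}\sigma_k^{(i)} - \sigma_k\sigma_{k-1}^{(i)}}{\sigma_{k-1}^2},$$
where $\sigma_k^{(i)} := \partial\sigma_k/\partial\lambda_i$. The main computational ingredient is the identity
$$\sum_i \lambda_i^2\,\sigma_k^{(i)} = \sigma_1\sigma_k - (k+1)\sigma_{k+1},$$
which I would prove by sorting the summands in $\sigma_1\sigma_k = \sum_i \sum_{|J|=k}\lambda_i\prod_{j\in J}\lambda_j$ according to whether $i \in J$ (contributing $\sum_i \lambda_i^2\sigma_k^{(i)}$) or $i \notin J$ (contributing $(k+1)\sigma_{k+1}$, since each $(k+1)$-subset is counted $k+1$ times). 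Applying this identity with both $k$ and $k-1$ in the expansion of $\sum_i F^{ii}\lambda_i^2$, the cross term $\sigma_1\sigma_{k-1}\sigma_k$ cancels and I arrive at the clean expression
$$\sum_i F^{ii}\lambda_i^2 = c\,\frac{k\sigma_k^2 - (k+1)\sigma_{k-1}\sigma_{k+1}}{\sigma_{k-1}^2}.$$

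The desired inequality $\sum_i F^{ii}\lambda_i^2 \geq F^2/n$ now becomes, after clearing the common denominator $\sigma_{k-1}^2$ and using $c/n = k/(n-k+1)$,
$$k(n-k)\,\sigma_k^2 \geq (n-k+1)(k+1)\,\sigma_{k-1}\sigma_{k+1},$$
which is precisely Newton's inequality $p_k^2 \geq p_{k-1}p_{k+1}$ written out, since $(C_n^k)^2/(C_n^{k-1}C_n^{k+1}) = (n-k+1)(k+1)/(k(n-k))$. I do not anticipate a serious obstacle; the only mildly delicate point is that on $\Gamma_k$ the quantity $\sigma_{k+1}$ need not be nonnegative, but if $\sigma_{k+1} \leq 0$ then, since $\sigma_{k-1} > 0$ on $\Gamma_k$, the right-hand side is nonpositive while the left-hand side is nonnegative and the inequality is automatic; in the remaining case $\sigma_{k+1} > 0$ one invokes Newton directly. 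Equality at $\lambda = (1,\ldots,1)$ is consistent with $F(I) = n$ and $F^{ii}(I) = 1$.
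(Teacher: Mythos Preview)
Your argument is correct. The computation of $\sum_i F^{ii}\lambda_i^2$ via the identity $\sum_i \lambda_i^2\,\sigma_k^{(i)} = \sigma_1\sigma_k - (k+1)\sigma_{k+1}$ is clean, and the reduction to $k(n-k)\,\sigma_k^2 \geq (n-k+1)(k+1)\,\sigma_{k-1}\sigma_{k+1}$ is exactly Newton's inequality $p_k^2 \geq p_{k-1}p_{k+1}$.

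The paper does not actually prove this lemma; it defers to \cite{Lu} for the details. Your route via Newton--Maclaurin is the standard one and is almost certainly what appears there, so there is nothing substantive to compare. One small remark: your case split on the sign of $\sigma_{k+1}$ is harmless but unnecessary, since Newton's inequality $p_k^2 \geq p_{k-1}p_{k+1}$ holds for \emph{all} real $n$-tuples $(\lambda_1,\dots,\lambda_n)$, not merely those in $\Gamma_k$ or $\Gamma_{k+1}$; so once you reach the displayed inequality you may invoke it directly without worrying about signs.
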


\begin{lemm}\label{sigma_k 2}
Let $F=n\frac{C^{k-1}_n}{C^k_n}\frac{\sigma_k}{\sigma_{k-1}}$ and $(\lambda_i)\in \Gamma_k$, then 
\begin{align*}
n\leq \sum_i F^{ii}\leq nk
\end{align*}
\end{lemm}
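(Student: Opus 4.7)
The plan is to produce a closed-form expression for $\sum_i F^{ii}$ involving only $\sigma_k,\sigma_{k-1},\sigma_{k-2}$, and then bound the remaining ratio via Newton's inequality. First, writing $F = c_{n,k}\,\sigma_k/\sigma_{k-1}$ with $c_{n,k} = n\binom{n}{k-1}/\binom{n}{k} = nk/(n-k+1)$, I would apply the quotient rule, use the derivative identity $\partial \sigma_j/\partial \lambda_i = \sigma_{j-1}(\lambda\,|\,i)$ (with $\lambda\,|\,i$ the $(n-1)$-tuple obtained by deleting $\lambda_i$), and sum over $i$ using the combinatorial identity $\sum_i \sigma_j(\lambda\,|\,i) = (n-j)\,\sigma_j(\lambda)$. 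A short simplification should then give
\[
\sum_i F^{ii} \;=\; nk \;-\; \frac{nk(n-k+2)}{n-k+1}\cdot \frac{\sigma_k\,\sigma_{k-2}}{\sigma_{k-1}^2},
\]
where $\sigma_0 := 1$ (and for $k=1$ the second term is absent, giving $\sum F^{ii}=n$ directly from $F=\sigma_1$).

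The upper bound $\sum_i F^{ii} \le nk$ is then immediate, since $(\lambda_i)\in\Gamma_k$ forces $\sigma_j>0$ for $1\le j\le k$, making the subtracted term non-negative. For the lower bound I would invoke Newton's inequality for the normalized symmetric functions $p_j = \sigma_j/\binom{n}{j}$, specifically $p_{k-1}^2 \ge p_{k-2}\,p_k$, which holds for every real $n$-tuple (by iterated Rolle's theorem applied to $\prod_i(x-\lambda_i)$; no positivity of individual $\lambda_i$ is needed). Translating back to the unnormalized $\sigma_j$'s yields
\[
\frac{\sigma_k\,\sigma_{k-2}}{\sigma_{k-1}^2} \;\le\; \frac{\binom{n}{k}\binom{n}{k-2}}{\binom{n}{k-1}^2} \;=\; \frac{(k-1)(n-k+1)}{k(n-k+2)},
\]
and substituting into the boxed formula collapses it to $\sum_i F^{ii} \ge nk - n(k-1) = n$.

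The main obstacle is really just bookkeeping: arranging the binomial coefficients so that everything cancels to the clean constants $nk$ and $n$, and remembering that Newton's inequality in its general form is valid for all real tuples (so it applies here even though $\Gamma_k$ allows some $\lambda_i$ to be negative when $k<n$). It is also worth sanity-checking the edge cases $k=1$ and $k=2$ (the latter using $\sigma_0=1$) to confirm the derived formula behaves correctly, and noting that $\sigma_{k-1}>0$ on $\Gamma_k$ so division by $\sigma_{k-1}^2$ is legitimate throughout.
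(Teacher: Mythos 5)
Your proof is correct, and it is self-contained where the paper simply defers to \cite{Lu}. Each step checks out: the quotient rule combined with $\partial\sigma_j/\partial\lambda_i=\sigma_{j-1}(\lambda\,|\,i)$ and $\sum_i\sigma_j(\lambda\,|\,i)=(n-j)\sigma_j$ gives exactly
$$\sum_i F^{ii}=c_{n,k}(n-k+1)-c_{n,k}(n-k+2)\,\frac{\sigma_k\sigma_{k-2}}{\sigma_{k-1}^2}
=nk-\frac{nk(n-k+2)}{n-k+1}\,\frac{\sigma_k\sigma_{k-2}}{\sigma_{k-1}^2},$$
the upper bound follows since $\sigma_k,\sigma_{k-1},\sigma_{k-2}>0$ on $\Gamma_k$ (with $\sigma_0=1$, and the term absent when $k=1$), the binomial ratio $\binom{n}{k}\binom{n}{k-2}/\binom{n}{k-1}^2=\tfrac{(k-1)(n-k+1)}{k(n-k+2)}$ is computed correctly, and the lower bound then collapses to $nk-n(k-1)=n$. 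Your remark that Newton's inequality $p_{k-1}^2\ge p_{k-2}p_k$ holds for arbitrary real tuples (not just positive ones) is important here, since $\Gamma_k$ with $k<n$ permits negative eigenvalues, and you correctly note that $\sigma_{k-1}>0$ justifies the division. This is the standard argument one expects for such estimates on $\sum_i F^{ii}$, so it almost certainly matches the cited source in spirit; no gap.
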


\subsection{Parametrization on graph and $C^0$ estimate}

Since the initial hypersurface $\Sigma_0$ is star-shaped, we can consider it as a graph 
on $\mathbb{S}^n$, i.e. $X=(x,r)$ where $x$ is the coordinate on $\mathbb{S}^n$ and $r$ is the radial function.  
By taking derivatives, we have
\begin{align}\label{e_i}
X_i=\partial_i+r_i\partial_r,\quad g_{ij}=r_ir_j+\phi^2\sigma_{ij}
\end{align}
and
\begin{align}\label{v}
\nu=\frac{1}{v}\left (-\frac{r^i}{\phi^2}\partial_i+\partial_r\right ) , 
\end{align}
where $\nu$ is the unit normal vector, $v=(1+\frac{|\nabla r|^2}{\phi^2})^{\frac{1}{2}}$. 
Note that all the derivatives are on $\mathbb{S}^n$.

Thus
\begin{align*}
\frac{dr}{dt}=\frac{1}{Fv}, \dot{x}^i=-\frac{r^i}{\phi^2Fv}
\end{align*}
we have
\begin{align}\label{4.1}
\frac{\partial r}{\partial t}=\frac{dr}{dt}-r_j \dot{x}^j=\frac{v}{F}
\end{align}

By a direct computation, c.f. (2.6) in \cite{D} we have
\begin{align}\label{4.2}
h_{ij}=\frac{1}{v}(-r_{ij}+\phi\phi^\prime\sigma_{ij}+\frac{2\phi^\prime r_ir_j}{\phi})
\end{align}

Now we consider a function 
\begin{align}
\varphi=\int_{r_0}^r\frac{1}{\phi}
\end{align}
thus
\begin{align}\label{4.3}
\varphi_i=\frac{r_i}{\phi}, \varphi_{ij}=\frac{r_{ij}}{\phi}-\frac{\phi^\prime r_ir_j}{\phi^2}.
\end{align}
If we write everything in terms of $\varphi$, we have
\begin{align}\label{derivative of varphi}
\frac{\partial \varphi}{\partial t}=\frac{v}{\phi F}
\end{align}
and
\begin{align}
v=(1+|D\varphi|^2)^{\frac{1}{2}},g_{ij}=\phi^2(\varphi_i\varphi_j+\sigma_{ij}), g^{ij}=\phi^{-2}\left(\sigma^{ij}-\frac{\varphi^i\varphi^j}{v^2}\right).
\end{align}

Moreover, 
\begin{align}\label{4.6}
h_{ij}&=\frac{\phi}{v}\left( \phi^\prime(\sigma_{ij}+\varphi_i\varphi_j)-\varphi_{ij}\right),\\\nonumber
h^i_j&=g^{ik}h_{kj}=\frac{\phi^\prime}{\phi v}\delta^i_j-\frac{1}{\phi v}\tilde{\sigma}^{ik}\varphi_{kj}
\end{align}
where $\tilde{\sigma}^{ij}=\sigma^{ij}-\frac{\varphi^i\varphi^j}{v^2}$.

\begin{lemm}\label{C^0 estimate}
Let $\bar{r}(t)=\sup_{\mathbb{S}^n}r(\cdot,t)$ and $\underline{r}(t)=\inf_{\mathbb{S}^n}r(\cdot,t )$, then we have
\begin{align}
&\phi(\bar{r}(t))\leq e^{t/n}\phi(\bar{r}(0))\\\nonumber
&\phi(\underline{r}(t))\geq e^{t/n}\phi(\underline{r}(0))
\end{align}
\end{lemm}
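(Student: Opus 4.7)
The plan is to apply the maximum principle to the radial height function $r(\cdot,t):\mathbb{S}^n \to \mathbb{R}$, using the graphical evolution \eqref{4.1}, which reads $\partial_t r = v/F$. Both $\bar r(t)$ and $\underline r(t)$ are Lipschitz in $t$, and by a standard application of Hamilton's trick one can select, at almost every $t$, a spatial extremizer $x_t \in \mathbb{S}^n$ at which $\frac{d\bar r}{dt}(t) = \partial_t r(x_t,t)$ (respectively for $\underline r$), with $Dr(x_t,t)=0$ and $D^2 r(x_t,t)$ of the appropriate sign. The handling of the nondifferentiability of $\bar r$ and $\underline r$ is the only place where any technical care is needed, and it is entirely routine.

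At a spatial maximizer, $|Dr|=0$ forces $v=1$, and \eqref{4.2} reduces to $h_{ij} = -r_{ij} + \phi\phi'\sigma_{ij}$. Since $r_{ij}\le 0$ and $g_{ij} = \phi^2\sigma_{ij}$ at that point, we have $h_{ij} \ge (\phi'/\phi)\, g_{ij}$ as symmetric $2$-tensors, so every principal curvature of $\Sigma_t$ at $x_t$ is bounded below by $\phi'(\bar r)/\phi(\bar r)$. Because $F = n\frac{C^{k-1}_n}{C^k_n}\frac{\sigma_k}{\sigma_{k-1}}$ is monotone increasing in each eigenvalue on the Garding cone $\Gamma_k$ (a classical consequence of the Newton--MacLaurin inequalities), this yields
\[
 F \;\ge\; F\bigl((\phi'/\phi)\, I\bigr) \;=\; n\,\phi'(\bar r)/\phi(\bar r).
\]
Hence $\frac{d\bar r}{dt} = \frac{1}{F}\big|_{x_t} \le \frac{\phi(\bar r)}{n\,\phi'(\bar r)}$, which rewrites as $\frac{d}{dt}\phi(\bar r(t)) \le \phi(\bar r(t))/n$. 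Integrating gives $\phi(\bar r(t)) \le e^{t/n}\phi(\bar r(0))$.

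The lower bound follows by the parallel argument: at a spatial minimizer $D^2 r \ge 0$ forces $h_{ij} \le (\phi'/\phi)\,g_{ij}$, monotonicity of $F$ gives $F \le n\phi'/\phi$, and so $\frac{d}{dt}\phi(\underline r) \ge \phi(\underline r)/n$, which integrates to the claimed inequality $\phi(\underline r(t)) \ge e^{t/n}\phi(\underline r(0))$. I do not foresee any substantive obstacle here: this estimate is the direct analogue of the $C^0$ step carried out in \cite{Lu} for the anti-de Sitter--Schwarzschild background, and the same scheme adapts to the present setting using only the warping function $\phi$ satisfying \eqref{equation phi}. The real difficulty in Theorem \ref{thm1} lies instead in the subsequent $C^1$ and $C^2$ estimates.
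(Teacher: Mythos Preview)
Your proposal is correct and follows essentially the same argument as the paper: apply the maximum principle to the radial function via $\partial_t r = v/F$, note that at a spatial extremum $v=1$ and the principal curvatures are bounded on the appropriate side by $\phi'/\phi$, and then use the homogeneity and monotonicity of $F$ to deduce $F \gtrless n\phi'/\phi$, from which the differential inequality $\frac{d}{dt}\log\phi(\bar r)\le \frac{1}{n}$ (resp.\ $\ge$) integrates to the claimed bounds. The paper routes the second-fundamental-form comparison through the auxiliary function $\varphi$ and \eqref{4.6} rather than directly through \eqref{4.2}, but this is only a cosmetic difference.
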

\begin{proof}
Recall that $\frac{\partial r}{\partial t}=\frac{v}{F}$, where $F$ is a normalized operator on $(h^i_j)$. At the point where the function $r(\cdot, t)$ attains its maximum, we have $\nabla r=0, (r_{ij})\leq 0$, from (\ref{4.3}), we deduce that $\nabla \varphi=0, (\varphi_{ij})\leq 0$ at the maximum point. From (\ref{4.6}), we have $(h^i_j)\geq \left(\frac{\phi^\prime}{\phi }\delta^i_j\right)$, where we may assume $(g_{ij})$ and $(h_{ij})$ is diagonalized if necessary. Since $F$ is homogeneous of degree $1$, and $F(1,\cdots,1)=n$, we have
\begin{align*}
v^2=1+|\nabla\varphi|^2=1, F(h^i_j)\geq \frac{\phi^\prime}{\phi }F(\delta^i_j)=\frac{n\phi^\prime}{\phi },
\end{align*}
thus
\begin{align*}
\frac{d}{dt}\bar{r}(t)\leq \frac{\phi(\bar{r}(t))}{n\phi^\prime (\bar{r}(t))}
\end{align*}
i.e.
\begin{align*}
\frac{d}{dt}\log\phi(\bar{r}(t))\leq \frac{1}{n}
\end{align*}
which yields to the first inequality. Similarly, we can prove the second inequality, thus we have the lemma.
\end{proof}

\subsection{Evolution equations and $C^1$ estimate}
Before we go on with the estimate, let's derive some evolution equations first. 
\begin{align}
\dot{g}_{ij}=\frac{2h_{ij}}{F},\quad \dot{\nu}=\frac{g^{ij}F_ie_j}{F^2}
\end{align}

\begin{align}\label{6.5}
\dot{h}^i_j=-\frac{1}{F}h^{i}_kh^k_{j}-\nabla^i\nabla_j\left(\frac{1}{F}\right)-\frac{1}{F}\bar{R}^i_{\nu j\nu}
\end{align}

Together with the interchanging formula (\ref{interchanging formula}), we have
\begin{align}\label{6.6}
\dot{h}^i_j=&-\frac{1}{F}h^i_kh^k_j+ \frac{F^{pq,rs}{h_{pq}}^ih_{rsj}}{F^2}  -\frac{2F^{pq}{h_{pq}}^iF^{rs}h_{rsj}}{F^3}-\frac{1}{F}\bar{R}^i_{\nu j\nu}\\\nonumber
&+\frac{g^{ki}F^{pq}}{F^2} \big( h_{kj,pq}-h^l_{q}(h_{kl}h_{pj}-h_{kj}h_{lp})-h^l_{j}(h_{lk}h_{pq}-h_{kq}h_{lp})\\\nonumber
&+h^l_{q}\bar{R}_{kpjl}+h^l_{j}\bar{R}_{kpql}+\nabla_p\bar{R}_{kjq\nu}+\nabla_k\bar{R}_{jpq\nu} \big)
\end{align}
where $F^{ij}=\frac{\partial F}{\partial h_{pq}}$ and $F^{pq,rs}=\frac{\partial^2 F}{\partial h_{pq}\partial h_{rs}}$.

We also need the evolution equation for support function $u=\left\langle \phi\frac{\partial}{\partial r},\nu\right\rangle$.

\begin{align}\label{support derivative}
\dot{u}&=\frac{\phi^\prime}{F}+\frac{\phi  g^{ij}F_ir_j}{F^2}
\end{align}

Now, we need to consider the curvature term. By Lemma \ref{ADS curvature}, (\ref{e_i}) and (\ref{v}), we have
\begin{align*}
\bar{R}_{\nu jnk}&=\frac{r_n \delta_{jk}}{v}\left(-\phi\phi^{\prime\prime}-(1-(\phi^\prime)^2)\right)+\frac{r_k \delta_{jn}}{v}\left(\phi\phi^{\prime\prime}+(1-{\phi^\prime}^2)\right)
\end{align*}
Note that $g^{pn}=\phi^{-2}\left(\sigma^{p n}-\frac{r^p r^n}{v^2\phi^2}\right)$, where $ r^p = g^{pq} r_q$.
Thus
\begin{align}\label{6.10}
g^{pn}\nabla_p \Phi\bar{R}_{\nu jnk}&=\left(\frac{|\nabla r|^2\delta_{jk}-r_jr_k}{\phi v^3}\right)\left(-\phi\phi^{\prime\prime}-(1-{\phi^\prime}^2)\right)\leq 0 .
\end{align}

\begin{lemm}\label{C^1 estimate}
Along the flow, $|\nabla \varphi|\leq C$, where $C$ depends on $\Sigma_0,n,k$. In addition if $F^{ij}$ is uniformly elliptic and $\phi F$ is bounded above, then $|\nabla \varphi|\leq Ce^{-\alpha t}$, where $C,\alpha$ depends on $\Sigma_0,n,k$, uniform ellipticity constant of $F^{ij}$ and the upper bound of $\phi F$.
\end{lemm}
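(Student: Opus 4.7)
The plan is to apply the parabolic maximum principle to the auxiliary function $w := \tfrac{1}{2}|D\varphi|_\sigma^2$ on $\mathbb{S}^n \times [0,T)$, where $D$ and $|\cdot|_\sigma$ denote the covariant derivative and norm on the round unit sphere $(\mathbb{S}^n,\sigma)$. Since $v=\sqrt{1+2w}$, controlling $w$ is equivalent to controlling $v$ and hence $|\nabla\varphi|$.

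Starting from $\partial_t\varphi = v/(\phi F)$ in \eqref{derivative of varphi}, I would differentiate covariantly in $\mathbb{S}^n$ and contract with $\varphi^l$. Using $\phi_{;l}=\phi\phi'\varphi_l$ (from \eqref{4.3}) and the dependence of $F$ on $h^i_j$ given by \eqref{4.6}, the resulting equation takes the schematic form
\[
\partial_t w \;=\; \mathcal{P}[w] \;-\; \frac{2v\phi'}{F}\,w \;-\; \mathcal{Q}[D^2\varphi] \;+\; \mathcal{L}(\varphi,D\varphi),
\]
where $\mathcal{P}$ is a linear second-order elliptic operator on $\mathbb{S}^n$ with positive-definite symbol (arising from the dependence of $F$ on the Hessian of $\varphi$ through $\tilde\sigma^{ik}=\sigma^{ik}-\varphi^i\varphi^k/v^2$, which is positive definite since $|D\varphi|^2<v^2$); $\mathcal{Q}[D^2\varphi]$ is a manifestly nonnegative quadratic form in $D^2\varphi$ produced by the identity $\varphi^l\varphi_{l;ij}=w_{;ij}-\varphi^l_{;i}\varphi_{l;j}$ together with positivity of $F^{ij}$; and $\mathcal{L}$ collects lower-order contributions whose coefficients involve $\phi,\phi',\phi''$ and the ambient Schwarzschild curvature from Lemma \ref{ADS curvature}, all bounded via the $C^0$ estimate of Lemma \ref{C^0 estimate}.

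At a spatial maximum of $w(\cdot,t)$, $Dw=0$ kills the first-order part of $\mathcal{L}$, and $D^2 w \leq 0$ combined with positive definiteness of the symbol of $\mathcal{P}$ gives $\mathcal{P}[w]\leq 0$. Since $\phi'>0$ by \eqref{equation phi} and $v\geq\sqrt{2w}$, the coefficient $-\frac{2v\phi'}{F}$ is strictly negative; a quantitative examination using that $h^i_j$ remains in the G{\aa}rding cone $\Gamma_k$ shows that $-\frac{2v\phi'}{F}w$ dominates the remaining contributions from $\mathcal{L}$ once $w$ is large. Hence $\partial_t w^{\max}\leq 0$ whenever $w^{\max}$ exceeds a threshold depending only on $\Sigma_0,n,k$, which proves the first assertion.

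For the exponential decay, the added hypotheses give quantitative control. Uniform ellipticity of $F^{ij}$ makes $\mathcal{Q}[D^2\varphi]$ coercive of order $|D^2\varphi|^2/F^2$, while the upper bound on $\phi F$ together with the lower bound on $\phi$ from Lemma \ref{C^0 estimate} (noting $\phi'\to 1$ as $\phi\to\infty$) forces $\frac{2v\phi'}{F}=\frac{2v\phi\phi'}{\phi F}$ to be bounded below by a positive constant $\alpha$. Absorbing $\mathcal{L}$ into these two negative terms via Cauchy-Schwarz yields $(w^{\max})'\leq -\alpha w^{\max}$, hence $|D\varphi|\leq Ce^{-\alpha t}$ by ODE comparison. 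The main obstacle will be the careful bookkeeping inside $\mathcal{L}$: one must commute third covariant derivatives on $\mathbb{S}^n$ (to reconcile $\varphi^l\varphi_{k;j;l}$ with $\varphi^l\varphi_{l;k;j}$) and track Schwarzschild-curvature contributions involving $\phi''$ and $\bar R_{\nu i\nu j}$, though \eqref{6.10} suggests the latter has a favorable sign.
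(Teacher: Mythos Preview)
Your overall strategy is the same as the paper's: apply the maximum principle to $w=\tfrac12|D\varphi|^2_\sigma$ on $\mathbb{S}^n$. However, the mechanism you single out is not the one that drives the estimate, and in fact your main term cancels.

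Specifically, the term $-\tfrac{2v\phi'}{F}w$ (more precisely $-\tfrac{2v\phi'}{\phi F}w$) that you extract by differentiating the explicit $\phi$ in the denominator of $v/(\phi F)$ is exactly offset by the $\phi$-dependence hidden in $F$. Since $h^i_j=\tfrac{1}{\phi v}(\phi'\delta^i_j-\tilde\sigma^{ik}\varphi_{kj})$ and $F$ is homogeneous of degree one, Euler's identity gives $\partial F/\partial\phi=-F/\phi$, and the resulting contribution $+\tfrac{v\phi'}{\phi F}\varphi_k$ in $-\tfrac{v}{\phi F^2}F_k$ cancels the one from $\partial_k(1/\phi)$. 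The paper makes this transparent by writing $\partial_t\varphi=1/G$ with $G=v^{-2}\tilde F(\phi'\delta^i_j-\tilde\sigma^{ik}\varphi_{kj})$, which has \emph{no} bare $\phi$; then $G_\varphi=v^{-2}\tilde F^i_i\,\phi\phi''$, and since $\phi''=O(\phi^{-n})$ this term is nonpositive but far too weak to produce exponential decay.

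The actual source of both the uniform bound and the exponential decay is precisely the commutator you relegate to $\mathcal{L}$ as an ``obstacle''. Rewriting $\varphi^k\varphi_{ijk}$ via $\omega_{ij}$ on the round sphere introduces the Ricci term of $(\mathbb{S}^n,\sigma)$, and after contraction with $\tilde F^i_l\tilde\sigma^{lj}$ one obtains $-\tilde F^i_i|D\varphi|^2+\tilde F^i_l\varphi_i\varphi^l\le 0$. In the paper's computation \emph{every} term in $\partial_t\omega$ at a spatial maximum is manifestly nonpositive, so $\omega\le\sup\omega_0$ follows with no a priori control on $F$ whatsoever; your argument, by contrast, needs $F$ bounded to make $-\tfrac{2v\phi'}{F}w$ dominate $\mathcal L$, which is circular at this stage. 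For the decay, uniform ellipticity of $\tilde F^{ij}$ turns the commutator term into $\le -c\,\omega$ (since $\tilde F^i_l\varphi_i\varphi^l\le\lambda_{\max}|D\varphi|^2\le(\tilde F^i_i-(n-1)\lambda_{\min})|D\varphi|^2$), and the factor $1/(v^2G^2)=1/(\phi F)^2$ is bounded below by the hypothesis on $\phi F$, yielding $(\omega e^{\lambda t})'\le 0$ for small $\lambda$.
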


\begin{proof}
By (\ref{derivative of varphi}) and (\ref{4.6}), we have
\begin{align*}
\frac{\partial \varphi}{\partial t}=\frac{v}{\phi F}=\frac{v^2}{\tilde{F}(\phi^\prime \delta_{ij}-\tilde{\sigma}^{ik}\varphi_{kj})}=\frac{1}{G}
\end{align*}
where $\tilde{F}=\phi vF$.

Let $G^{ij}=\frac{\partial G}{\partial \varphi_{ij}}$, $G^k=\frac{\partial G}{\partial \varphi_k}$, $G_\varphi=\frac{\partial G}{\partial \varphi}$ then
\begin{align*}
G^{ij}=-\frac{1}{v^2}\tilde{F}^{i}_l\tilde{\sigma}^{lj},\quad G_\varphi=\frac{1}{v^2}\tilde{F}^i_i\phi\phi^{\prime\prime}
\end{align*}

Let $\omega=\frac{1}{2}|\nabla \varphi|^2$, we have
\begin{align*}
\frac{\partial \omega}{\partial t}&=\nabla_k\varphi\nabla_k\dot{\varphi}=-\frac{\varphi^k}{G^2}\nabla_kG=-\frac{\varphi^k}{G^2}\left(G^{ij}\varphi_{ijk}+G^l\varphi_{lk}+G_\varphi\varphi_k\right)\\
&=\frac{1}{v^2G^2}\left(\tilde{F}^{i}_l\tilde{\sigma}^{lj}\varphi^k\varphi_{ijk}-v^2 G^l\omega_l-2\tilde{F}^{i}_i\phi\phi^{\prime\prime} \omega\right)
\end{align*}

We want to write the term $\tilde{\sigma}^{lj}\varphi_{ijk}$ in terms of second derivative of $\omega$. Note that
\begin{align*}
\omega_{ij}&=\varphi_{kij}\varphi^k+\varphi_{ki}\varphi^{k}_j\\
&=\varphi_{ijk}\varphi^k+(\sigma_{ij}\sigma_{kp}-\sigma_{ik}\sigma_{jp})\varphi^p\varphi^k+\varphi_{ki}\varphi^{k}_j\\
&=\varphi_{ijk}\varphi^k+\sigma_{ij}|\nabla\varphi |^2-\varphi_i\varphi_j+\varphi_{ki}\varphi^{k}_j
\end{align*}
and
\begin{align*}
\tilde{\sigma}^{lj}\left(\sigma_{ij}|\nabla\varphi |^2-\varphi_i\varphi_j\right)=\delta_{i}^l|\nabla \varphi|^2-\varphi_i\varphi^l
\end{align*}
Thus we have
\begin{align}
\frac{\partial w}{\partial t}=\frac{1}{v^2G^2}\left(\tilde{F}^{i}_l\tilde{\sigma}^{lj}\omega_{ij}-\tilde{F}^{i}_i|\nabla \varphi|^2+\tilde{F}^{i}_l\varphi_i\varphi^l-v^2 G^l\omega_l-2\tilde{F}^{i}_i\phi\phi^{\prime\prime} \omega\right)-\frac{1}{v^2G^2}\tilde{F}^{i}_l\tilde{\sigma}^{lj}\varphi_{ki}\varphi^{k}_j
\end{align}

Note that $-\tilde{F}^{i}_i|\nabla \varphi|^2+\tilde{F}^{i}_l\varphi_i\varphi^l\leq 0$ and $-\tilde{F}^{i}_l\tilde{\sigma}^{lj}\varphi_{ki}\varphi^{k}_j\leq 0$, thus by the maximum principle, we have
\begin{align*}
\omega(\cdot,t)\leq \sup\omega_0.
\end{align*}

Now if $F^{ij}$ is uniformly elliptic, i.e. $\tilde{F}^{ij}$ is uniformly elliptic and $\phi F$ is bounded above, then consider $\tilde{\omega}=\omega e^{\lambda t}$, at the maximum point, we have
\begin{align*}
\frac{\partial \tilde{\omega}}{\partial t} \leq \frac{1}{v^2G^2}\left(-\tilde{F}^{i}_i|\nabla \varphi|^2+\tilde{F}^{i}_l\varphi_i\varphi^l\right)e^{\lambda t}+\lambda\tilde{\omega}\leq (-\frac{c}{\phi^2F^2}+\lambda)\tilde{\omega}
\end{align*}
thus $\tilde{\omega}$ is uniformly bounded, we have $|\nabla\varphi|$ decays exponentially.
\end{proof}

\begin{lemm}\label{lem-Ric}
Suppose $\bRic(\nu,\nu)\leq 0$ for $\Sigma_0$, then $\bRic(\nu,\nu)\leq 0$ for all $\Sigma_t$. 
If $ k \ge 2$, this implies $R>0$ for all $\Sigma_t$. 
\end{lemm}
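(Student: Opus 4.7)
The plan is to translate the geometric condition $\bRic(\nu,\nu) \le 0$ into a pointwise gradient bound on the star-shaped profile, and then invoke the $C^1$ estimate already in hand. Using Lemma \ref{ADS curvature} and the identity $\phi''(r) = m(n-1)\phi^{-n}$, obtained by differentiating \eqref{equation phi}, I would work in a local orthonormal frame $\{\partial_r, E_1, \ldots, E_n\}$ adapted to the warped product and decompose the outward unit normal as $\nu = a\partial_r + bW$, with $W$ a unit vector tangent to the sphere factor and $a^2 + b^2 = 1$. Summing the sectional curvature contributions (the tangential planes contribute $+2m\phi^{-n-1}$ each, the mixed planes $-m(n-1)\phi^{-n-1}$ each) should yield the clean identity
$$\bRic(\nu,\nu) = m(n-1)\phi^{-n-1}\bigl(1 - (n+1)a^2\bigr).$$
Since $m > 0$, the condition $\bRic(\nu,\nu) \le 0$ is thus equivalent to the pointwise inequality $(n+1)a^2 \ge 1$.

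From \eqref{v} one reads off $a = \langle \partial_r, \nu\rangle = 1/v$ with $v = (1+|\nabla\varphi|^2)^{1/2}$, so $(n+1)a^2 \ge 1$ is the same as $|\nabla\varphi|^2 \le n$. Now the proof of Lemma \ref{C^1 estimate} actually establishes the stronger pointwise statement $\omega(\cdot,t) \le \sup_{\mathbb{S}^n}\omega(\cdot,0)$ for $\omega = \tfrac{1}{2}|\nabla\varphi|^2$, via a maximum principle argument. Hence, if $\bRic(\nu,\nu) \le 0$ on $\Sigma_0$, equivalently $|\nabla\varphi_0|^2 \le n$ pointwise, this bound is propagated to $|\nabla\varphi|^2 \le n$ on $\Sigma_t$ for all $t \ge 0$, which gives back $\bRic(\nu,\nu) \le 0$ on every $\Sigma_t$.

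For the scalar curvature assertion when $k \ge 2$, I would apply the contracted Gauss equation together with the vanishing of $\bar R$ on $\SM$ (noted just after \eqref{equation phi}):
$$R = \bar R - 2\bRic(\nu,\nu) + H^2 - |A|^2 = -2\bRic(\nu,\nu) + 2\sigma_2.$$
The assumption that each $\Sigma_t$ is $k$-convex with $k \ge 2$ forces $\sigma_2 > 0$, while the previous step provides $\bRic(\nu,\nu) \le 0$, so $R \ge 2\sigma_2 > 0$. The only piece requiring actual calculation is the explicit Ricci formula in the first paragraph; no serious obstacle is anticipated, since the preservation of $|\nabla\varphi|^2$ is already contained in Lemma \ref{C^1 estimate} and the scalar curvature identity is immediate from Gauss.
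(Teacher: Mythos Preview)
Your proposal is correct and follows essentially the same route as the paper: both reduce $\bRic(\nu,\nu)\le 0$ to the pointwise condition $|\nabla\varphi|^2\le n$ via the explicit formula $\bRic(\nu,\nu)=m(n-1)\phi^{-n-1}\bigl(1-(n+1)v^{-2}\bigr)$, then invoke the maximum principle from Lemma~\ref{C^1 estimate} to propagate it, and finally read off $R>0$ from the Gauss equation $\sigma_2=\tfrac{R}{2}+\bRic(\nu,\nu)$. The only cosmetic difference is that you compute the Ricci formula through an orthonormal decomposition $\nu=a\partial_r+bW$, whereas the paper substitutes the expression \eqref{v} for $\nu$ directly; the two computations yield the same identity.
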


\begin{proof}
By Lemma \ref{ADS curvature}, we have
\begin{align*}
\bRic =\left((n-1)(1-{\phi^\prime}^2)-\phi\phi^{\prime\prime}\right)g_{\mathbb{S}^n}-n\frac{\phi^{\prime\prime}}{\phi}dr^2
\end{align*}
Together with (\ref{v}), i.e. $\nu=\frac{1}{v}\left(\partial_r-\frac{r^j\partial_j}{\phi^2}\right)$, we have
\begin{align*}
\bRic (\nu,\nu)&=-n\frac{\phi^{\prime\prime}}{\phi v^2}+\frac{(n-1)(1-{\phi^\prime}^2)-\phi\phi^{\prime\prime}}{\phi^4 v^2}|\nabla r|^2\\
&=-n\frac{\phi^{\prime\prime}}{\phi v^2}+\frac{(n-1)(1-{\phi^\prime}^2)-\phi\phi^{\prime\prime}}{\phi^2 v^2}(v^2-1)\\
&=\frac{(n-1)(1-{\phi^\prime}^2)-\phi\phi^{\prime\prime}}{\phi^2}-(n-1)\frac{1-{\phi^\prime}^2+\phi\phi^{\prime\prime}}{\phi^2 v^2} .
\end{align*}
Since $\phi^\prime=\sqrt{1- 2m\phi^{1-n}}$, thus
\begin{align*}
1-{\phi^\prime}^2= 2m\phi^{1-n},\quad \phi\phi^{\prime\prime}= {m(n-1)} \phi^{1-n} .
\end{align*}
Thus
\begin{align*}
\bRic (\nu,\nu)={m(n-1)} \phi^{-1-n}-  {m(n-1)(n+1)} \phi^{-1-n} v^{-2} . 
\end{align*}

On the other hand, $v^2=1+|\nabla\varphi|^2$ and, by Lemma \ref{C^1 estimate}, $|\nabla\varphi|$ is bounded above by the initial data. 
Thus it follows that, if initially $\bRic(\nu,\nu)\leq 0$, i.e. $|\nabla\varphi|^2\leq n$,  then it remains true along the flow.  

To prove the second assertion, it suffices to note that
\begin{align*}
\sigma_2=\frac{R}{2}+\bRic (\nu,\nu)>0
\end{align*}
along the flow. Thus $R>0$ along the flow.
\end{proof}

\subsection{Bound for principal curvature}
\begin{lemm}\label{lemma 8}
Along the flow, $F\phi\leq C$, where $C$ depends only on $\Sigma_0,n,k$. In addition, if $F^{ij}$ is uniformly elliptic, then $F\phi\leq n+Ce^{-\alpha t}$, where $C,\alpha$ depends only on $\Sigma_0,n,k$ and the uniform ellipticity constant of $F^{ij}$.
\end{lemm}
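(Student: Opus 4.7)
The plan is to derive a parabolic equation for $Q := F\phi$ along the flow and apply the maximum principle, in a spirit parallel to the proof of Lemma \ref{C^1 estimate}. I would first compute $\partial_t Q = \phi\,\partial_t F + F\,\partial_t\phi$. From $\partial_t X = \nu/F$ and $u = \langle\phi\partial_r,\nu\rangle$, one obtains $F\,\partial_t\phi = \phi' u/\phi$; contracting the evolution equation \eqref{6.5} with $F^i_j := \partial F/\partial h^j_i$ and using $\nabla^i\nabla_j(1/F) = -F^{-2}\nabla^i\nabla_j F + 2F^{-3}\nabla^i F\,\nabla_j F$ yields
\[
\partial_t F = \frac{F^{ij}}{F^2}\nabla_i\nabla_j F - \frac{2F^{ij}\nabla_i F\,\nabla_j F}{F^3} - \frac{F^i_j(h^2)^j_i}{F} - \frac{F^i_j\,\bar R^j{}_{\nu i \nu}}{F}.
\]
Using $\nabla_i\phi = (\phi'/\phi)\nabla_i\Phi$ together with the Hessian identity $\Phi_{;ij} = \phi' g_{ij} - h_{ij}u$ from \eqref{eq-Hess-Phi}, I would rewrite $\phi\cdot\frac{F^{ij}}{F^2}\nabla_i\nabla_j F$ as $\frac{F^{ij}}{F^2}\nabla_i\nabla_j Q$ modulo lower-order gradient and zeroth-order terms, producing an equation of the form
\[
\partial_t Q = \frac{F^{ij}}{F^2}\nabla_i\nabla_j Q + \langle W,\nabla Q\rangle - \frac{\phi F^i_j(h^2)^j_i}{F} + \frac{\phi' u}{\phi} + \mathcal{E},
\]
where $\mathcal{E}$ collects the ambient curvature contributions, controlled by Lemma \ref{ADS curvature}.

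For the \textbf{boundedness assertion}, at a spatial maximum of $Q$ the leading second-order term is nonpositive and $\nabla Q = 0$. Lemma \ref{sigma_k 1} gives the coercive bound $F^i_j(h^2)^j_i \geq F^2/n$, hence $\phi F^i_j(h^2)^j_i/F \geq Q/n$. On the other hand, $u = \phi/v \leq \phi$ and $(\phi')^2 = 1 - 2m\phi^{1-n} \leq 1$ yield $\phi' u/\phi \leq 1$, while $|\mathcal{E}|$ is uniformly bounded in terms of the ambient curvature of $\SM$. Therefore $\partial_t Q_{\max} \leq -Q_{\max}/n + C$ at a maximum, which keeps $Q_{\max}(t)$ uniformly bounded by a constant depending only on $\Sigma_0, n, k$.

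For the \textbf{exponential decay to $n$} under uniform ellipticity of $F^{ij}$, a direct calculation gives
\[
1 - \frac{\phi' u}{\phi} = 1 - \frac{\phi'}{v} = \frac{2m\phi^{1-n} + |\nabla\varphi|^2}{v(v + \phi')}.
\]
The first summand in the numerator decays like $e^{-(n-1)t/n}$ by Lemma \ref{C^0 estimate}; under uniform ellipticity, Lemma \ref{C^1 estimate} forces $|\nabla\varphi|^2$ to decay exponentially; and $|\mathcal{E}| = O(\phi^{-(n+1)}) = O(e^{-(n+1)t/n})$ by Lemma \ref{ADS curvature}. Setting $\tilde Q := e^{\alpha t}(Q - n)$ for $\alpha > 0$ sufficiently small so that $\alpha < 1/n$ and $\alpha$ is below the decay rates above, at any positive space-time maximum of $\tilde Q$ in $[0,T]\times\mathbb{S}^n$ one has $\partial_t\tilde Q \geq 0$, while
\[
\partial_t\tilde Q \leq \bigl(\alpha - 1/n\bigr)\tilde Q + e^{\alpha t}\left(\frac{\phi'}{v} - 1 + \mathcal{E}\right),
\]
with the final term uniformly bounded in $T$. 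It follows that $\tilde Q \leq C$ for all $t$, i.e.\ $F\phi \leq n + Ce^{-\alpha t}$.

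The principal technical difficulty is the algebraic manipulation needed to produce the parabolic equation for $Q$: converting $\phi \cdot \nabla_i\nabla_j F$ into $\nabla_i\nabla_j Q$ requires computing the Hessian of $\phi$ on $\Sigma_t$, and pairing it cleanly with the ambient curvature terms $\bar R^j{}_{\nu i \nu}$ in \eqref{6.5} relies on the static structure of $\SM$—namely, that $\phi'$ is the static potential and satisfies \eqref{static equation}. This pairing is what guarantees the residual $\mathcal{E}$ really decays at the rate $\phi^{-(n+1)}$ predicted by Lemma \ref{ADS curvature}. A secondary subtlety is that the equilibrium value $Q = n$ arises from the balance between the sink $\phi F^i_j(h^2)^j_i/F \geq Q/n$ (sharp on round spheres) and the source $\phi' u/\phi$; the two contributions must be tracked together to identify the correct equilibrium.
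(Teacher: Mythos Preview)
Your approach is the same as the paper's: apply the maximum principle to $Q=F\phi$, use Lemma~\ref{sigma_k 1} for the sink $F^{i}_{j}(h^2)^{j}_{i}\ge F^2/n$, control ambient curvature by Lemma~\ref{ADS curvature}, and invoke the exponential decay of $|\nabla\varphi|$ from Lemma~\ref{C^1 estimate} for the refined bound. The paper's execution is a bit more direct: rather than deriving a global parabolic equation for $Q$, it works at the maximum point of $\log Q$ and uses the critical equations $\phi_i/\phi+F_i/F=0$ and $\phi_{ij}/\phi+F_{ij}/F-2F_iF_j/F^2\le 0$ to replace $F^{ij}F_{ij}/F$ by $-F^{ij}\phi_{ij}/\phi$; this avoids the algebra you flag as the ``principal technical difficulty.''

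One caution about your displayed equation for $\partial_t Q$. Converting $\phi\cdot F^{ij}\nabla_i\nabla_j F$ into $F^{ij}\nabla_i\nabla_j Q$ produces the extra term $-\tfrac{F^{ij}}{F}\nabla_i\nabla_j\phi$. Computing $\nabla_i\nabla_j\phi=\phi''r_ir_j+\phi'r_{ij}$ via \eqref{4.2} (note: \eqref{eq-Hess-Phi} is the Hessian of $\Phi$, not of $\phi$) yields, after contraction with $F^{ij}$, not only decaying pieces but also a genuine source $+\phi'v$ (from the $-v h_{ij}$ part of $r_{ij}$, since $F^{ij}h_{ij}=F$) and a genuine sink $-\phi'^{2}F^{ij}\sigma_{ij}/F\sim -F^{i}_{i}/(F\phi)^2$ (from the $\phi\phi'\sigma_{ij}$ part). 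These are not ambient curvature terms and should not be placed in $\mathcal E$. The paper keeps both and arrives at the quadratic inequality
\[
0\le \frac{2}{F\phi}-\frac{1}{n}-\frac{n}{(F\phi)^2}+Ce^{-\alpha t},
\]
which gives $F\phi\le n+Ce^{-\alpha t}$ immediately. In your version the extra source and extra sink happen to balance at the same equilibrium $Q=n$, so your conclusion survives, but the equation as you wrote it (single source $\phi'u/\phi$, remainder in $\mathcal E$) is not the correct one.
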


\begin{proof}
Consider $ F \phi$, at the maximum point, we have
\begin{align*}
\frac{\dot{\phi}}{\phi}+\frac{\dot{F}}{F}\geq 0
\end{align*}
and
\begin{align*}
\frac{\phi_i}{\phi}+\frac{F_i}{F}=0,\quad \frac{\phi_{ij}}{\phi}+\frac{F_{ij}}{F}-2\frac{F_iF_j}{F^2}\leq 0
\end{align*}

By (\ref{4.1}) and (\ref{6.5}), we have
\begin{align*}
0&\leq \frac{\phi^\prime v}{F\phi}+\frac{F^j_i}{F} \left(-\frac{1}{F}h^{i}_kh^k_{j}-\nabla^i\nabla_j\left(\frac{1}{F}\right)-\frac{1}{F}\bar{R}^i_{\nu j\nu}\right)\\
&=\frac{\phi^\prime v}{F\phi}+\frac{F^j_i}{F}\left(-\frac{1}{F}h^{i}_kh^k_{j}+\frac{\nabla^i\nabla_jF}{F^2}-2\frac{\nabla^i F\nabla_jF}{F^3}-\frac{1}{F}\bar{R}^i_{\nu j\nu}\right)
\end{align*}

By the critical equation above and (\ref{4.2}), we have
\begin{align*}
0&\leq  \frac{\phi^\prime v}{F\phi}+\frac{F^j_i}{F}\left(-\frac{1}{F}h^{i}_kh^k_{j}-\frac{\nabla^i\nabla_j \phi}{F\phi}-\frac{1}{F}\bar{R}^i_{\nu j\nu}\right)\\
&=\frac{\phi^\prime v}{F\phi}+\frac{F^j_i}{F^2}\left(-h^{i}_kh^k_{j}-\bar{R}^i_{\nu j\nu}\right)-\frac{F^{ij}}{F^2\phi}\left(\phi^{\prime\prime}r_ir_j+\phi^\prime r_{ij}\right)\\
&=\frac{\phi^\prime v}{F\phi}+\frac{F^j_i}{F^2}\left(-h^{i}_kh^k_{j}-\bar{R}^i_{\nu j\nu}\right)-\frac{F^{ij}}{F^2\phi}\left( \phi^{\prime\prime}r_ir_j+\phi^\prime\left(\phi\phi^\prime\sigma_{ij}+\frac{2\phi^\prime r_ir_j}{\phi}-h_{ij}v\right)\right)\\
&=2\frac{\phi^\prime v}{F\phi}-\frac{F^j_i}{F^2}\left(h^{i}_kh^k_{j}+\bar{R}^i_{\nu j\nu}\right)-\frac{F^{ij}}{F^2\phi}\left( \phi^{\prime\prime}r_ir_j+\phi^\prime\left(\phi\phi^\prime\sigma_{ij}+\frac{2\phi^\prime r_ir_j}{\phi}\right)\right)
\end{align*}

By lemma \ref{ADS curvature}, Lemma \ref{sigma_k 1}, Lemma \ref{sigma_k 2}, Lemma \ref{C^1 estimate} and property of $\phi$, we have
\begin{align*}
0&\leq \frac{2v}{F\phi}-\frac{1}{n}+C\frac{F^i_i }{F^2\phi^{n+1}}-\frac{F^i_i}{F^2\phi^2}\\
&\leq \frac{C}{F\phi}-\frac{1}{n}+C\frac{F^i_i }{F^2\phi^{n+1}}-\frac{n}{F^2\phi^2}\\
&\leq \frac{C}{F\phi}-\frac{1}{n}+C\frac{F^i_i }{F^2\phi^{n+1}}
\end{align*}
thus $F\phi$ is bounded above.

If in addition $F^{ij}$ is uniformly elliptic, by Lemma \ref{C^1 estimate}, $|\nabla\varphi|$ decays exponentially, then
\begin{align*}
0\leq \frac{2}{F\phi}-\frac{1}{n}-\frac{n}{F^2\phi^2}+Ce^{-\alpha t}
\end{align*}
i.e.
\begin{align*}
F\phi\leq n+Ce^{-\alpha t}.
\end{align*}
\end{proof}

\begin{lemm}\label{time derivative}
Along the flow, $|\dot{\varphi}|\leq C$, where $C$ depends on $\Sigma_0, n, k$.
\end{lemm}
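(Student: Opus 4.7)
The plan is to reduce the bound on $|\dot\varphi|$ to establishing a uniform positive lower bound on $\phi F$, and then to prove the latter via a parabolic minimum principle that mirrors the proof of Lemma \ref{lemma 8}. Indeed, \eqref{derivative of varphi} gives $\dot\varphi = v/(\phi F)$, and Lemma \ref{C^1 estimate} already supplies $v = \sqrt{1+|\nabla\varphi|^2} \le C(\Sigma_0, n, k)$, so the claim $|\dot\varphi| \le C$ is equivalent to $\inf_{\Sigma_t}\phi F \ge c_0 > 0$ uniformly in $t$. Note that Lemma \ref{lemma 8} furnishes the companion upper bound $\phi F \le C$, and the compactness of $\Sigma_0$ together with $F > 0$ on $\Sigma_0$ yields a positive lower bound at the initial time.

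To establish the lower bound, I would apply the parabolic minimum principle to $\phi F$. At a spatial minimum point, the critical-point identity $F_i = -(\phi'/\phi) r_i F$ and the positive semidefiniteness of $(\phi F)_{ij}$ combine to give
\begin{equation*}
-F^{j}_{i} \nabla^i \nabla_j \tfrac{1}{F} \;\ge\; -\frac{F^{j}_{i} \phi_{ij}}{F \phi}.
\end{equation*}
Substituting \eqref{6.5} to compute $\dot F = F^{j}_{i}\dot h^{i}_{j}$, using $\dot\phi = \phi' v / F$ from \eqref{4.1} together with \eqref{4.2} to expand $\phi_{ij}$, invoking Euler's identity $F^{j}_{i} h^{i}_{j} = F$ together with the bounds of Lemmas \ref{sigma_k 1}-\ref{sigma_k 2}, and using the Schwarzschild curvature decay $\bar R_{\alpha\beta\gamma\mu} = O(\phi^{-(n+1)})$ from Lemma \ref{ADS curvature}, one computes the evolution of $\phi F$ at the minimum point. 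The resulting inequality has the schematic form
\begin{equation*}
\frac{d}{dt}(\phi F) \;\ge\; 2 \phi' v \,-\, \alpha(\phi F) \,+\, O(\phi^{-(n-1)}),
\end{equation*}
where $\alpha$ is a continuous function controlled by the known upper bound on $\phi F$. Since $\phi' \to 1$ and $\phi^{-(n-1)} \to 0$ as $r \to \infty$, while $\phi(\underline r(t)) \to \infty$ by Lemma \ref{C^0 estimate}, the right-hand side is strictly positive whenever $\phi F$ falls below a fixed threshold $\epsilon(\Sigma_0, n, k) > 0$ for all sufficiently large $t$; on the remaining finite time interval, positivity of $\inf_{\Sigma_t}\phi F$ follows by continuity from the initial data. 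Hamilton's parabolic minimum principle then yields $\inf_{\Sigma_t}\phi F \ge c_0$, whence $|\dot\varphi| = v/(\phi F) \le C$, as claimed.

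The main obstacle is arranging the Step~2 computation so that the favorable factor $\phi' v$ survives on the right-hand side without forcing the appearance of a curvature-squared term $F^{j}_{i} h^{i}_{k} h^{k}_{j}$ with the wrong sign: Lemma \ref{sigma_k 1} bounds this quantity only from below, which is the wrong direction at a minimum point. This is accomplished by using the Hessian condition in the form that cancels the Hessian of $F$ against that of $\phi$, so that only $F^{ij}\phi_{ij}$ and its explicit consequences (expanded via \eqref{4.2}) enter the final inequality. The rapid decay of the ambient Schwarzschild curvature relative to the exponential growth of $\phi$ from Lemma \ref{C^0 estimate}, combined with the upper bound $\phi F \le C$ of Lemma \ref{lemma 8}, then ensures that all residual terms are controlled and the minimum-principle argument closes.
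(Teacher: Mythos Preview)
Your reduction to a lower bound on $\phi F$ is correct, but the minimum-principle argument you sketch does not close. The term $-F^{j}_{i}h^{i}_{k}h^{k}_{j}/F^{2}$ in the evolution of $\log(\phi F)$ comes directly from the variation formula \eqref{6.5} for $\dot h^{i}_{j}$; it is \emph{not} a byproduct of the Hessian manipulation, so your proposed ``cancellation of the Hessian of $F$ against that of $\phi$'' does nothing to remove it. At a minimum point this term contributes with the wrong sign, and at this stage of the argument there is no bound on the individual principal curvatures (Lemma~\ref{princial curvature} comes later and in fact relies on the present lemma via Lemma~\ref{lemma 9}). Likewise, the term $-F^{ij}(\phi')^{2}\phi\sigma_{ij}/(F^{2}\phi)$ arising from expanding $\phi_{ij}$ is of order $F^{i}_{i}/(F\phi)^{2}$ and blows up precisely when $\phi F$ is small, so your schematic inequality $\partial_t(\phi F)\ge 2\phi' v-\alpha(\phi F)+O(\phi^{1-n})$ is not obtainable from the computation.

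The paper avoids all of this by a much shorter route: differentiate the scalar equation $\partial_t\varphi=1/G$ (with $G=\tilde F/v^{2}$, $\tilde F=\phi v F$) in $t$ to obtain a \emph{linear} parabolic equation for $\dot\varphi$,
\[
\partial_t\dot\varphi=\frac{1}{v^{2}G^{2}}\bigl(\tilde F^{i}_{l}\tilde\sigma^{lj}\dot\varphi_{ij}-v^{2}G^{k}\dot\varphi_{k}-\tilde F^{i}_{i}\,\phi\phi''\,\dot\varphi\bigr),
\]
and observe that in Schwarzschild $\phi\phi''=m(n-1)\phi^{1-n}>0$, so the zero-order coefficient is nonpositive. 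The standard maximum principle then gives $\sup\dot\varphi\le\sup_{t=0}\dot\varphi$. This is exactly why the paper orders the lemmas as it does: Lemma~\ref{time derivative} is proved first by this trick, and the lower bound $\phi F\ge c$ (Lemma~\ref{lemma 9}) is then an immediate consequence of $\dot\varphi=v/(\phi F)\le C$, rather than the other way around.
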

\begin{proof}
By (\ref{derivative of varphi}) and (\ref{4.6}), we have
\begin{align*}
\frac{\partial \varphi}{\partial t}=\frac{v}{\phi F}=\frac{v^2}{\tilde{F}(\phi^\prime \delta_{ij}-\tilde{\sigma}^{ik}\varphi_{kj})}=\frac{1}{G}
\end{align*}
where $\tilde{F}=\phi vF$.

Let $G^{ij}=\frac{\partial G}{\partial \varphi_{ij}}$, $G^k=\frac{\partial G}{\partial \varphi_k}$, $G_\varphi=\frac{\partial G}{\partial \varphi}$ then
\begin{align*}
G^{ij}=-\frac{1}{v^2}\tilde{F}^{i}_l\tilde{\sigma}^{lj},\quad G_\varphi=\frac{1}{v^2}\tilde{F}^i_i\phi\phi^{\prime\prime}
\end{align*}
thus
\begin{align*}
\frac{\partial \dot{\varphi}}{\partial t}&=-\frac{\dot{G}}{G^2}=-\frac{1}{G^2}\left(G^{ij}\dot{\varphi}_{ij}+G^k\dot{\varphi_k}+G_\varphi\dot{\varphi}\right)\\
&=\frac{1}{v^2G^2}\left(\tilde{F}^{i}_l\tilde{\sigma}^{lj}\dot{\varphi}_{ij}-v^2 G^k\dot{\varphi}_k-\tilde{F}^{i}_i\phi\phi^{\prime\prime} \dot{\varphi}\right)
\end{align*}
By maximum principle, we conclude that $|\dot{\varphi}|$ is bounded above.
\end{proof}

\begin{lemm}\label{lemma 9}
Along the flow, $F\phi\geq c$, where $c$ depends on $\Sigma_0,n,k$.
\end{lemm}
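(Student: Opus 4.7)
The plan is to apply the maximum principle to $F\phi$ at a \emph{spatial minimum} on $\Sigma_t$, in direct analogy with the proof of Lemma \ref{lemma 8} but with reversed inequalities. Let $\Psi(t) := \min_{\Sigma_t}(F\phi)$ and suppose it is attained at a point $p \in \Sigma_t$. At $p$ the first-order condition $\nabla_i(F\phi)=0$ becomes
\begin{equation*}
\frac{F_{;i}}{F} \;=\; -\frac{\phi_{;i}}{\phi},
\end{equation*}
and the second-order inequality $F^{ij}(F\phi)_{;ij} \ge 0$ (opposite sign to that used in Lemma \ref{lemma 8}) yields, after substituting the critical equation,
\begin{equation*}
\frac{F^{ij} F_{;ij}}{F} \;+\; \frac{F^{ij} \phi_{;ij}}{\phi} \;\ge\; \frac{2 F^{ij} \phi_{;i} \phi_{;j}}{\phi^2}.
\end{equation*}

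Next, I would expand $\partial_t(F\phi) = \dot F \, \phi + \phi' v$ using $\dot F = F^i_j \dot h^j_i$ and the evolution equation (\ref{6.5}). Substituting the Hessian formula for $\phi_{;ij}$ obtained from (\ref{eq-Hess-Phi}),
\begin{equation*}
\phi \, \phi_{;ij} \;=\; \bl \phi \phi'' - (\phi')^2 \br r_{;i} r_{;j} \;+\; (\phi')^2 g_{ij} \;-\; \phi' u \, h_{ij},
\end{equation*}
together with $u = \phi/v$ and the homogeneity identity $F^{ij} h_{ij} = F$, reduces the pointwise evolution at $p$ to a closed inequality in $F\phi$. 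Applying Lemma \ref{sigma_k 1} in the form $\sum_i F^{ii}\kappa_i^2 \ge F^2/n$, the bound $\sum_i F^{ii} \le nk$ from Lemma \ref{sigma_k 2}, the curvature decay $\bar R = O(\phi^{-n-1})$ from Lemma \ref{ADS curvature}, and the gradient bound $v \le C$ from Lemma \ref{C^1 estimate}, I expect to arrive at a pointwise estimate of the form
\begin{equation*}
0 \;\ge\; \frac{2 \phi' v}{F\phi} \;-\; \frac{1}{n} \;-\; \frac{C \sum_i F^{ii}}{F^2 \phi^{n+1}},
\end{equation*}
which is the mirror of the concluding inequality in the proof of Lemma \ref{lemma 8}. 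Rearranging gives $\frac{2\phi' v}{F\phi} \le \frac{1}{n} + O\!\bl (F^2 \phi^{n+1})^{-1} \br$; by the $C^0$ estimate of Lemma \ref{C^0 estimate}, $\phi(\underline r(t)) \ge \phi(\underline r(0))$ along the flow, so $\phi' \ge \phi'(\underline r(0)) > 0$ uniformly, and together with $v \ge 1$ this forces $F\phi \ge c$ for some $c = c(\Sigma_0, n, k) > 0$.

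The main obstacle is the term $-F^i_j h^j_k h^k_i/F$ appearing in $\dot F$: Lemma \ref{sigma_k 1} only bounds $\sum_i F^{ii}\kappa_i^2$ from \emph{below}, which is the wrong direction for controlling $\dot F$ from below if this term were handled in isolation. The key cancellation is between this term and the $-\phi' u F^{ij} h_{ij}/\phi = -\phi' F/v$ contribution produced by the $h_{ij}$ component of $\phi_{;ij}$ through (\ref{eq-Hess-Phi}); the trace identity $F^{ij} h_{ij} = F$ matches the two contributions so that the potentially large $\sum_i F^{ii}\kappa_i^2$ is absorbed and only the sharp $-\tfrac{1}{n}$ remains on the right, exactly as in Lemma \ref{lemma 8}, with all other contributions controlled by $C^0$, $C^1$, and curvature decay estimates.
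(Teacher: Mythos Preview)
Your approach has a genuine gap, and the paper's proof takes an entirely different (and much shorter) route.

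\textbf{The gap.} At a spatial minimum of $F\phi$, flipping the signs in the computation of Lemma~\ref{lemma 8} yields
\[
0 \;\ge\; \frac{2\phi' v}{F\phi} \;-\; \frac{F^{j}_{i}h^{i}_{k}h^{k}_{j}}{F^{2}} \;-\; \frac{F^{j}_{i}\bar{R}^{i}_{\nu j\nu}}{F^{2}} \;-\; \frac{F^{ij}}{F^{2}\phi}\Bigl(\phi'' r_{i}r_{j}+\phi'^{2}\phi\sigma_{ij}+\tfrac{2\phi'^{2}r_{i}r_{j}}{\phi}\Bigr).
\]
To reach your claimed inequality $0\ge \tfrac{2\phi'v}{F\phi}-\tfrac{1}{n}-\dots$ you would need $\sum_{i}F^{ii}\kappa_{i}^{2}\le F^{2}/n$, but Lemma~\ref{sigma_k 1} gives exactly the \emph{opposite} inequality. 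The ``cancellation'' you describe does not occur: the $h_{ij}$ part of $\phi_{;ij}$, via $F^{ij}h_{ij}=F$, produces the linear-in-curvature contribution $\tfrac{\phi' v}{F\phi}$ (this is precisely how the coefficient $\tfrac{2\phi'v}{F\phi}$ arises in Lemma~\ref{lemma 8}), not anything that absorbs the quadratic quantity $\sum_{i}F^{ii}\kappa_{i}^{2}$. Without an upper bound on $\sum_{i}F^{ii}\kappa_{i}^{2}/F^{2}$ --- which is unavailable prior to Lemma~\ref{princial curvature} --- the minimum-principle argument cannot close.

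\textbf{The paper's proof.} The paper bypasses this issue entirely. From \eqref{derivative of varphi} one has $\dot\varphi = v/(\phi F)$, and Lemma~\ref{time derivative} (proved by the scalar maximum principle applied to $\dot\varphi$ itself) gives $|\dot\varphi|\le C$. Since $v\ge 1$, this immediately yields $F\phi \ge v/C \ge 1/C$. The key point is that the evolution equation for $\dot\varphi$ involves only $G_{\varphi}=\tfrac{1}{v^{2}}\tilde F^{i}_{i}\phi\phi''\ge 0$ as the zeroth-order coefficient, so the maximum principle applies cleanly without any need to control $\sum_{i}F^{ii}\kappa_{i}^{2}$.
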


\begin{proof}
Since $\dot{\varphi}=\frac{v}{\phi F}$, by Lemma \ref{time derivative}, we have
\begin{align*}
\frac{v}{\phi F}\leq C
\end{align*}
thus $F\phi\geq c$.
\end{proof}

\begin{lemm}\label{princial curvature}
Along the flow, $|\kappa_i\phi|\leq C$, where $\kappa_i$ is the principal curvature of $\Sigma_t$, $C$ depends on $\Sigma_0,n,k$.
\end{lemm}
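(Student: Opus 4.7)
The plan is to establish an upper bound on $\kappa_{\max}\phi$ by a maximum principle argument on an auxiliary function, and then to derive a lower bound on each $\kappa_i\phi$ from the two-sided control on $F\phi$ already obtained in Lemmas \ref{lemma 8} and \ref{lemma 9}, together with the preservation of $k$-convexity.

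For the upper bound, I would follow the Gerhardt--Urbas strategy adapted to the Schwarzschild background. Define an auxiliary test function of the form
\begin{equation*}
W=\log\kappa_{\max}+\log\phi-\log(u-c),
\end{equation*}
where $c>0$ is chosen small enough that $u-c\geq c_0>0$ along the flow; such a choice is possible because $u=\langle\phi\partial_r,\nu\rangle$ admits a positive lower bound thanks to the $C^0$ bound $\underline r(t)>0$ and the $C^1$ bound $|\nabla\varphi|\leq C$ from Lemma \ref{C^1 estimate}. Introducing an orthonormal frame that diagonalizes $h^i_j$ at a first maximum point of $W$ (and replacing $\kappa_{\max}$ by $\eta^i_j\xi^j$ for fixed null vectors in the standard way to avoid non-smoothness), I would differentiate using the evolution equation \eqref{6.6} for $h^i_j$, the evolution equations for $\phi$ via \eqref{4.1}, and the identities for $u$ in Lemma \ref{support function}. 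The terms $-\tfrac{1}{F}h^i_kh^k_j$ in \eqref{6.6} combine with the Hessian term from $\Phi$ in \eqref{eq-Hess-Phi} to produce a leading negative contribution, while the concavity of $\log F$ on $\Gamma_k$ (a standard property of $\sigma_k/\sigma_{k-1}$) kills the second-derivative terms $F^{pq,rs}h_{pq}{}^ih_{rsj}/F^2$ and the $2F^{pq}h_{pq}{}^iF^{rs}h_{rsj}/F^3$ terms in a favorable sign. The curvature terms $\bar R^i_{\nu j\nu}$ and $\nabla\bar R$ from Lemma \ref{ADS curvature} decay like $\phi^{-(n+1)}$, and so can be absorbed using the $C^0$ lower bound on $\phi$. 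At the maximum of $W$, these contributions combine to force $\kappa_{\max}\phi\leq C$.

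For the lower bound, recall that along the flow the principal curvatures remain in $\Gamma_k$, so $\sigma_{k-1}(\kappa)>0$ and $\sigma_k(\kappa)>0$. From Lemma \ref{lemma 9} we have $F\phi\geq c>0$, which means
\begin{equation*}
\frac{\sigma_k(\kappa\phi)}{\sigma_{k-1}(\kappa\phi)}\geq \tilde c>0.
\end{equation*}
Combining this with the upper bound $\kappa_i\phi\leq C$ from Step 1 and using the Newton--Maclaurin inequalities for symmetric functions on $\Gamma_k$ (or a direct algebraic argument exploiting that all but one eigenvalue of $h\phi$ are already controlled), I can derive a positive lower bound for $\sigma_{k-1}(\kappa\phi)$, and consequently a lower bound for the smallest $\kappa_i\phi$. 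Since $k$-convexity only guarantees $\sigma_j>0$ for $j\leq k$, the ``negative'' $\kappa_i$'s are already bounded below in modulus by the upper bound on $\kappa_{\max}$ through the $k$-convex trace constraint $\sigma_1>0$.

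The main obstacle I expect is the computation at a maximum point of $W$: one must carefully handle the coupling between the evolution of $\kappa_{\max}$, the radial term $\log\phi$ (which brings in $\phi'/\phi$ and $\phi''/\phi$ coming from \eqref{equation phi}), and the support-function correction $\log(u-c)$ (which brings in the terms from Lemma \ref{support function}, including the ambient curvature term \eqref{6.10} whose sign was already seen to be favorable). Once the leading-order cancellations are organized so that the dominant term on the right-hand side at the maximum point is a negative multiple of $(\kappa_{\max}\phi)^2/F^2\sim (\kappa_{\max}\phi)^2$, all remaining terms are bounded by previously established quantities, and the maximum principle closes the argument.
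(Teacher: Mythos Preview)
Your approach is essentially the paper's: a maximum-principle argument on a test function combining $\log\kappa_{\max}$ with the support function $u$, using concavity of $F$, the curvature decay of Lemma \ref{ADS curvature}, and the two-sided bound on $F\phi$ from Lemmas \ref{lemma 8} and \ref{lemma 9}. The paper's test function is $\log\kappa_{\max}-\log u+\tfrac{2t}{n}$; your $\log\kappa_{\max}+\log\phi-\log(u-c)$ differs only by bounded terms since $u$, $\phi$, and $e^{t/n}$ are comparable by Lemmas \ref{C^0 estimate} and \ref{C^1 estimate}, though the paper's choice is cleaner because $t$ has no spatial derivatives on $\Sigma_t$. One small inaccuracy: the dominant negative term at the maximum is \emph{linear} in $\kappa_{\max}\phi$ (the paper arrives at $0\leq -C\kappa_{\max}\phi+C/(\kappa_{\max}\phi)+C$), not quadratic as you write, but this does not affect the conclusion; your explicit argument that the lower bound $\kappa_i\phi\geq -C$ follows from $\sigma_1>0$ together with the upper bound is correct and fills in a step the paper leaves implicit.
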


\begin{proof}
Consider $\log\eta-\log u+\frac{2t}{n}$, where
\begin{align*}
\eta=\sup\{h_{ij}\xi^i\xi^j:g_{ij}\xi^i\xi^j=1\}
\end{align*}
WLOG, we suppose that at the maximum point $\eta=h^1_1$, and we have
\begin{align}\label{9.1}
\frac{\dot{h_1^1}}{h^1_1}-\frac{\dot{u}}{u}+\frac{2}{n}\geq 0 
\end{align}
and
\begin{align}\label{9.2}
\frac{h^1_{1i}}{h^1_1}-\frac{u_i}{u}=0,\quad\frac{h^1_{1ij}}{h^1_1}\leq \frac{u_{ij}}{u}
\end{align}
by (\ref{6.6}), (\ref{support derivative}) and the critical equation, we have
\begin{align}\label{9.3}
0\leq& \frac{1}{h^1_1}\bigg(-\frac{1}{F}h^1_kh^k_1+ \frac{F^{pq,rs}{h_{pq}}^1h_{rs1}}{F^2}  -\frac{2F^{pq}{h_{pq}}^1F^{rs}h_{rs1}}{F^3}-\frac{1}{F}\bar{R}^1_{\nu 1\nu}\\\nonumber
&+\frac{g^{k1}F^{pq}}{F^2} \big( h_{k1,pq}-h^m_{q}(h_{km}h_{p1}-h_{k1}h_{mp})-h^m_{1}(h_{mk}h_{pq}-h_{kq}h_{mp})\\\nonumber
&+h^m_{q}\bar{R}_{kp1m}+h^m_{1}\bar{R}_{kpqm}+\nabla_p\bar{R}_{k1q\nu}+\nabla_k\bar{R}_{1pq\nu} \big)\bigg)\\\nonumber
&-\frac{\phi^\prime}{Fu}-\frac{\phi  g^{ij}F_ir_j}{F^2u}+\frac{2}{n}
\end{align}
consider the term $\frac{F^{pq}}{F^2} \frac{h^1_{1,pq}}{h^1_1} $, by (\ref{9.2}) and lemma \ref{support function},  we have
\begin{align}\label{9.4}
\frac{F^{pq}}{F^2} \frac{h^1_{1,pq}}{h^1_1} \leq \frac{F^{pq}}{F^2}\frac{u_{pq}}{u}=\frac{ F^{pq}}{F^2u}\left(g^{kl}h_{pqk}\Phi_l+\phi^\prime h_{pq}-(h^2)_{pq}u+g^{kl}\nabla_l\Phi\bar{R}_{\nu pkq}\right)
\end{align}
insert (\ref{9.4}) into (\ref{9.3}), together with the concavity of $F$, yields
\begin{align}
0\leq& \frac{1}{h^1_1}\bigg(-\frac{1}{F}h^1_kh^k_1-\frac{1}{F}\bar{R}^1_{\nu 1\nu}+\frac{g^{k1}F^{pq}}{F^2} \big(-h^m_{1}h_{mk}h_{pq}+h^m_{q}\bar{R}_{kp1m}+h^m_{1}\bar{R}_{kpqm}+\nabla_p\bar{R}_{k1q\nu}+\nabla_k\bar{R}_{1pq\nu} \big)\bigg)\\\nonumber
&+\frac{ g^{kl}F^{pq}}{F^2u}\nabla_l\Phi\bar{R}_{\nu pkq}+\frac{2}{n}
\end{align}

By (\ref{6.10}), we have
\begin{align}\label{9.6}
0&\leq \frac{1}{h^1_1}\bigg(-\frac{2}{F}h^1_kh^k_1-\frac{1}{F}\bar{R}^1_{\nu 1\nu}+\frac{g^{k1}F^{pq}}{F^2} \big(h^m_{q}\bar{R}_{kp1m}+h^m_{1}\bar{R}_{kpqm}+\nabla_p\bar{R}_{k1q\nu}+\nabla_k\bar{R}_{1pq\nu} \big)\bigg)+\frac{2}{n}
\end{align}

By Lemma \ref{ADS curvature}, all terms involving curvature terms of the ambient space are uniformly bounded by $C\phi^{-1-n}$, i.e.
\begin{align*}
\frac{g^{k1}F^{pq}}{F^2} \big(h^m_{q}\bar{R}_{kp1m}+h^m_{1}\bar{R}_{kpqm}+\nabla_p\bar{R}_{k1q\nu}+\nabla_k\bar{R}_{1pq\nu} \big)\leq \frac{CF^i_i}{F^2\phi^{n+1}} h^1_1 \leq Ch^1_1
\end{align*}
we have used Lemma \ref{sigma_k 2} and Lemma \ref{lemma 9} in the last inequality.

Plug into (\ref{9.6}), we have
\begin{align*}
0&\leq \frac{1}{h^1_1}\left(-\frac{2}{F}h^1_kh^k_1-\frac{1}{F}\bar{R}^1_{\nu 1\nu}\right)+C\leq \frac{1}{h^1_1}\left(-\frac{2}{F\phi}h^1_kh^k_1\phi+\frac{C}{F\phi^{n+1}}\right)+C\\
&\leq -Ch^1_1\phi+\frac{C}{h^1_1\phi}+C
\end{align*}
i.e. $h^1_1\phi\leq C$, since $ce^{\frac{t}{n}}\leq u\leq \phi\leq Ce^{\frac{t}{n}}$, we have the lemma.
\end{proof}

\subsection{Asymptotic behaviors}

\begin{lemm}\label{asymptotic second fundamental form}
$|h^i_j\phi-\delta^i_j|\leq Ce^{-\alpha t}$ where $C,\alpha$ depends only on $\Sigma_0,n,k$.  Moreover for any $p,q\geq 0$, we have $|\left(\frac{\partial}{\partial t}\right)^p\left(\phi\nabla\right)^q\phi^2\nabla h^i_j|\leq Ce^{-\alpha t}$ , where $\nabla$ is the unit gradient on $\Sigma_t$ and $C$ depends in addition on $p,q$.
\end{lemm}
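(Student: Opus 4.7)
The plan is to first upgrade $F^{ij}$ to uniformly elliptic, then sharpen the prior estimates to pick up exponential decay at the tensor level, and finally bootstrap via parabolic regularity for the derivative bounds.

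First, the bound $|\kappa_i\phi|\le C$ from Lemma~\ref{princial curvature}, together with $c\le F\phi\le C$ from Lemmas~\ref{lemma 8} and \ref{lemma 9}, keeps the rescaled principal curvatures $\phi\kappa=(\phi\kappa_1,\ldots,\phi\kappa_n)$ in a compact subset of the Garding cone $\Gamma_k$. Indeed, $F\phi$ equals $n\frac{C^{k-1}_n}{C^k_n}\frac{\sigma_k}{\sigma_{k-1}}$ evaluated at $\phi\kappa$, so the upper bound $\sigma_j(\phi\kappa)\le C$ combined with $F\phi\ge c>0$ and the Newton--Maclaurin inequalities forces $\sigma_{k-1}(\phi\kappa)\ge c>0$. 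Since $F^{ij}$ is homogeneous of degree zero, this yields uniform ellipticity of $F^{ij}$ along the flow. Applying the sharper halves of Lemmas~\ref{C^1 estimate} and \ref{lemma 8} then gives
\[
|\nabla\varphi|\le Ce^{-\alpha t},\qquad F\phi\le n+Ce^{-\alpha t}.
\]

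Second, one upgrades these scalar estimates to the tensor statement $|\phi h^i_j-\delta^i_j|\le Ce^{-\alpha t}$. The approach is to redo the maximum principle argument of Lemma~\ref{princial curvature} for the test function $\Psi=\log\eta-\log u+\tfrac{2t}{n}$, where $\eta$ is the largest principal curvature, now controlling the oscillation of $\Psi$ above its limiting value rather than its mere size. The favourable term $-\tfrac{2}{F}h^1_k h^k_1$ in the evolution equation \eqref{6.6}, combined with the coercivity $\sum F^{ii}\kappa_i^2\ge F^2/n$ of Lemma~\ref{sigma_k 1} and the homogeneity relation $\sum F^{ii}\kappa_i=F$, produces a damping in the evolution of $\Psi$; the ambient curvature contributions are $O(\phi^{-n-1})$ by Lemma~\ref{ADS curvature}, and the deviations $F\phi-n$ and $v-1$ act as $O(e^{-\alpha t})$ forcing terms. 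A dual argument on the smallest eigenvalue yields $\min_i(\phi\kappa_i)\ge 1-Ce^{-\alpha t}$, and together these give the desired tensor decay.

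Third, for higher derivatives, write the flow as a fully nonlinear concave uniformly parabolic equation for $\varphi$: Krylov--Evans theory yields interior $C^{2,\alpha}$ bounds and Schauder bootstrapping gives uniform $C^{k,\alpha}$ bounds in the rescaled geometry $\phi\nabla$. To extract exponential decay of $(\partial_t)^p(\phi\nabla)^q(\phi^2\nabla h^i_j)$, differentiate \eqref{6.6} repeatedly: each derivative of $h^i_j$ satisfies a linear uniformly parabolic equation whose forcing is polynomial in lower-order derivatives of $h^i_j$ and in ambient curvature derivatives, both of which decay at the required rate by the previous step and Lemma~\ref{ADS curvature}. Induction on $p+q$ together with a maximum principle (or energy) argument then closes the estimate. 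The main obstacle is the second step: the coercivity $\sum F^{ii}\kappa_i^2-F^2/n\ge 0$ degenerates precisely at the umbilic point, so a quantitative lower bound for this deficit in terms of the trace-free part of $\phi h^i_j$ must be extracted in order to obtain genuine damping in the maximum principle argument; only then can the unavoidable $Ce^{-\alpha t}$ errors from the Schwarzschild background be absorbed and the tensor convergence $\phi h^i_j\to\delta^i_j$ be established.
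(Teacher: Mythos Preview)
Your overall strategy is plausible but takes a substantially harder route than the paper, and the obstacle you flag in your second step is one the paper simply sidesteps.

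The paper does not attempt a direct maximum-principle argument on the eigenvalues of $h^i_j$ to obtain the tensor decay. Instead it works entirely with the scalar graph function $\varphi$. From \eqref{4.6} one has
\[
\phi h^i_j=\frac{\phi'}{v}\delta^i_j-\frac{1}{v}\tilde{\sigma}^{ik}\varphi_{kj},
\]
so $|\phi h^i_j-\delta^i_j|$ is controlled by $|\phi'-1|$, $|v-1|$, and $|\varphi_{kj}|$. The first two decay exponentially by \eqref{equation phi} and Lemma~\ref{C^1 estimate}. For the Hessian, the paper observes that Lemmas~\ref{C^1 estimate} and \ref{princial curvature} give uniform bounds on $\nabla\varphi$ and $\nabla^2\varphi$; Evans--Krylov then yields a uniform $C^{2,\alpha}$ bound on $\varphi$, and a standard interpolation inequality between $|\nabla\varphi|\le Ce^{-\alpha t}$ and $[\nabla^2\varphi]_{C^\alpha}\le C$ forces $|\nabla^2\varphi|\le Ce^{-\alpha' t}$. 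That immediately gives the first assertion of the lemma, with no need to quantify the deficit in $\sum F^{ii}\kappa_i^2-F^2/n$ near umbilic points. The higher-order statement then follows by Schauder theory for the scalar equation for $\varphi$, giving $|\varphi|_{C^l}\le Ce^{-\alpha t}$ for every $l\ge1$, and reading off derivatives of $h^i_j$ from the explicit formula.

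Your approach via the evolution of $h^i_j$ and a refined Lemma~\ref{princial curvature} argument could in principle be made to work, but it requires exactly the quantitative coercivity estimate you identify as the main obstacle, and you have not supplied it. The paper's route through $\varphi$ and interpolation is both shorter and avoids that difficulty altogether; it exploits the graphical structure over $\mathbb{S}^n$, which your more geometric argument does not.
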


\begin{proof}
To prove the lemma, we first notice that by (\ref{4.6}) and (\ref{derivative of varphi}), we have
\begin{align*}
h^i_j=\frac{\phi^\prime}{\phi v}\delta^i_j-\frac{1}{\phi v}\tilde{\sigma}^{ik}\varphi_{kj}
\end{align*}
and
\begin{align*}
\frac{\partial \varphi}{\partial t}=\frac{v}{\phi F}=\frac{v}{\tilde{F}}
\end{align*}
where 
\begin{align*}
\tilde{F}=\phi F=F(\frac{\phi^\prime}{v}\delta^i_j-\frac{1}{ v}\tilde{\sigma}^{ik}\varphi_{kj})
\end{align*}
By the Lemma \ref{C^1 estimate} and Lemma \ref{princial curvature}, we know that $\nabla\varphi$ and $\nabla^2\varphi$ is uniformly bounded. By Evans-Krylov, we have $|\varphi|_{2,\alpha}\leq C$. By standard interpolation inequality, we have $\nabla^2\varphi$ decays exponentially as $\nabla \varphi$ decays exponentially. Thus from the definition of $h^i_j$ above, we have the first inequality.

By Schauder estimate, we have $|\varphi|_l\leq Ce^{-\alpha t}$ for all $l\geq 1$.

By the definition of $h^i_j$, we have
\begin{align*}
\nabla h^i_j=&\left(\frac{\phi^{\prime\prime}}{\phi v}-\frac{{\phi^\prime}^2}{\phi^2 v}\right)\delta^i_j\nabla r-\frac{\phi^\prime}{\phi v^3}\delta^i_j\varphi_k\nabla \varphi_k\\
&+\frac{\phi^\prime}{\phi^2 v}\tilde{\sigma}^{ik}\varphi_{kj}\nabla r+\frac{1}{\phi v^3}\tilde{\sigma}^{ik}\varphi_{kj}\varphi_l\nabla \varphi_l\\
&+\frac{1}{\phi v}\nabla\varphi^i\varphi^k\varphi_{kj}+\frac{1}{\phi v}\nabla\varphi^k\varphi^i\varphi_{kj}-\frac{1}{\phi v}\tilde{\sigma}^{ik}\nabla\varphi_{kj}
\end{align*}
Since $|\varphi|_l\leq Ce^{-\alpha t}$ for all $l\geq 1$,  this implies 
\begin{align*}
|\phi^2\nabla h^i_j|\leq Ce^{-\alpha t} .
\end{align*}
By induction, we have
\begin{align*}
|\left(\frac{\partial}{\partial t}\right)^p\left(\phi\nabla\right)^q\phi^2\nabla h^i_j|\leq Ce^{-\alpha t}
\end{align*}
for all $p,q\geq 0$.
\end{proof}

\begin{lemm}\label{renormalized metric decay}
Let $\tilde{g}_{ij}=\phi^{-2}g_{ij}$ be a normalized metric, then $|\tilde{g}_{ij}-\sigma_{ij}|\leq Ce^{-\alpha t}$, where $\sigma_{ij}$ is the standard metric on $\mathbb{S}^n$ and $C,\alpha$ depends only on $\Sigma_0,n,k$.  Moreover for any $p,q\geq 0$, we have $|\left(\frac{\partial}{\partial t}\right)^p\left(\phi\nabla\right)^q\phi\nabla \tilde{g}_{ij}|\leq Ce^{-\alpha t}$ , where $\nabla$ is the unit gradient on $\Sigma_t$ and $C$ depends in addition on $p,q$.
\end{lemm}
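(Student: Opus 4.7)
The plan is to reduce the statement to the $C^1$-decay of $\varphi$ already proved in Lemma \ref{C^1 estimate}, via a direct identity expressing $\tilde{g}_{ij}-\sigma_{ij}$ in terms of sphere-derivatives of $\varphi$. Using the graph parametrization $X=(x,r)$, formula \eqref{e_i} gives $g_{ij}=r_ir_j+\phi^2\sigma_{ij}$, and since $\varphi_i=r_i/\phi$ by \eqref{4.3}, we obtain the pointwise identity
\begin{equation*}
\tilde{g}_{ij}-\sigma_{ij}=\phi^{-2}g_{ij}-\sigma_{ij}=\varphi_i\varphi_j.
\end{equation*}
Thus the whole claim is really a statement about derivatives of $\varphi$.

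To apply the improved part of Lemma \ref{C^1 estimate}, I would first verify its hypotheses: Lemma \ref{asymptotic second fundamental form} gives $|h^i_j\phi-\delta^i_j|\leq Ce^{-\alpha t}$, so for all large $t$ the Weingarten map is comparable to $\phi^{-1}\mathrm{Id}$ and therefore $F^{ij}$ is uniformly elliptic; meanwhile Lemma \ref{lemma 8} bounds $\phi F$ from above (and Lemma \ref{lemma 9} from below). Consequently $|\nabla\varphi|\leq Ce^{-\alpha t}$ on $\mathbb{S}^n$, and the identity above immediately yields $|\tilde{g}_{ij}-\sigma_{ij}|\leq |\nabla\varphi|^2\leq Ce^{-\alpha t}$, which is the zeroth-order assertion.

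For the derivative estimates, I would use that the proof of Lemma \ref{asymptotic second fundamental form} has already produced $|\varphi|_l\leq Ce^{-\alpha t}$ for every $l\geq 1$, by combining Evans--Krylov with Schauder interpolation applied to the uniformly parabolic equation satisfied by $\varphi$. Differentiating the identity $\tilde{g}_{ij}-\sigma_{ij}=\varphi_i\varphi_j$ on $\mathbb{S}^n$ yields expressions polynomial in covariant derivatives of $\varphi$ with respect to $\sigma$, all of which decay like $Ce^{-\alpha t}$. Finally, to convert $\mathbb{S}^n$-derivatives into normalized derivatives $\phi\nabla$ on $\Sigma_t$, I would use $g=\phi^2(\sigma+d\varphi\otimes d\varphi)$: the factor $\phi$ precisely accounts for the rescaling between $g$-unit and $\sigma$-unit vectors, while the connection of $g$ differs from that of $\phi^2\sigma$ only by tensors built from $\nabla\varphi$, which are themselves exponentially small. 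Time derivatives are handled similarly, replacing $\partial_t\varphi=v/(\phi F)$ and using the already-established exponential decay of all relevant quantities.

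I do not expect any serious obstacle: the only delicate point is the bookkeeping in the last step, converting sphere-connection expressions into $\phi\nabla$-expressions while keeping track of the error terms that arise from the fact that the Levi-Civita connection of $g$ differs from that of $\phi^2\sigma$. Since every such error factor carries an extra $\nabla\varphi$, it contributes only an additional $Ce^{-\alpha t}$, so the final estimate $|(\partial_t)^p(\phi\nabla)^q\phi\nabla\tilde{g}_{ij}|\leq Ce^{-\alpha t}$ follows by induction on $p+q$.
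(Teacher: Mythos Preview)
Your approach is correct and in fact more direct than the paper's. Both arguments ultimately rest on the pointwise identity $\tilde{g}_{ij}-\sigma_{ij}=\varphi_i\varphi_j$ together with the exponential decay $|\varphi|_l\le Ce^{-\alpha t}$ already obtained in the proof of Lemma \ref{asymptotic second fundamental form}. The paper, however, precedes this with a rescaling step following Gerhardt \cite{G1}: it sets $\hat X=Xe^{-t/n}$, shows the rescaled metric $\hat g$ converges and that $\hat r\to r_0$ for a constant $r_0$, thereby obtaining the additional information $r=r_0e^{t/n}+O(e^{(1/n-\alpha)t})$ and $\phi=r_0e^{t/n}+O(e^{(1/n-\alpha)t})$, i.e.\ convergence of the rescaled flow to a \emph{specific} round sphere. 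None of that is needed for the lemma as stated, so your shortcut loses nothing here. One small redundancy in your write-up: you need not re-verify the hypotheses of the refined part of Lemma \ref{C^1 estimate} via Lemma \ref{asymptotic second fundamental form}, since the decay of $\nabla\varphi$ (and of all higher sphere-derivatives) is already recorded inside the proof of Lemma \ref{asymptotic second fundamental form}, which you invoke anyway for the higher-order estimates.
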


\begin{proof}
Following the step in \cite{G1}, we consider the rescaled hypersurface as
$ \hat{X}=Xe^{-\frac{t}{n}}$
then we have
$ \hat{r}=re^{-\frac{t}{n}}$,
thus
\begin{align*}
\hat{g}_{ij}=\phi^2(\hat{r})\sigma_{ij}+\hat{r}_i\hat{r}_j
\end{align*}
By Lemma \ref{C^0 estimate} and Lemma \ref{C^1 estimate} , we have $c_0\leq \hat{r}\leq C_0$ uniformly, and $|\hat{r}_i|\leq Ce^{-\alpha t}$, thus 
\begin{align*}
c_0\sigma\leq \hat{g}\leq C_0\sigma
\end{align*}
for $t$ large enough, i.e. $\hat{g}$ is well defined.

Now let's prove that $\hat{g}$ converges to $\hat{g}_\infty$. By Lemma \ref{C^1 estimate}, we have
\begin{align*}
\frac{\partial \hat{g}_{ij}}{\partial t}=2\phi(\hat{r})\phi^\prime(\hat{r})\left(\frac{v}{F}e^{-\frac{t}{n}}-\frac{1}{n}re^{-\frac{t}{n}}\right)\sigma_{ij}+\frac{\partial }{\partial t}\left(\hat{r}_i\hat{r}_j\right)\leq Ce^{-\alpha t}
\end{align*}

Thus $\hat{g}$ converges exponentially fact to $\hat{g}_\infty$. To prove that $\hat{g}_\infty$ is a round metric, we only need to prove that $\hat{r}$ is constant. Since $\hat{r}$ is defined on $\mathbb{S}^n$, we take derivative of $\mathbb{S}^n$ on $\hat{r}$ to obtain
\begin{align*}
|\nabla_{\mathbb{S}^n}\hat{r}|=|\nabla_{\mathbb{S}^n}r e^{-\frac{t}{n}}|\leq Ce^{-\alpha t}
\end{align*}
Thus $\hat{r}$ is constant for $t=\infty$, i.e. we have
\begin{align*}
r=r_0e^{\frac{t}{n}}+O(e^{(\frac{1}{n}-\alpha) t})
\end{align*}
and
\begin{align*}
\phi(r)=r_0e^{\frac{t}{n}}+O(e^{(\frac{1}{n}-\alpha) t})
\end{align*}
Hence, at time $t$, we have
\begin{align*}
g_{ij}=\phi^2(r)\left(\sigma_{ij}+\varphi_i\varphi_j\right)=r_0^2e^{\frac{2t}{n}}\sigma_{ij}+O(e^{(\frac{2}{n}-2\alpha) t}) ,
\end{align*}
and the normalized metric $\tilde{g}_{ij}$ satisfies
\begin{align*}
\tilde{g}_{ij}=\phi^{-2}g_{ij}=\sigma_{ij}+O(e^{-\alpha t}). 
\end{align*}

Similar to the previous lemma, high regularity decay estimates follows by Lemma \ref{C^1 estimate} 
and the definition of $\tilde{g}_{ij}$.
\end{proof}

\begin{rema}
Let  $ k \ge 2$.
Let $ g$ be a metric on $\mathbb{S}^n$ so that $(S^n, g)$ isometrically embeds into $ \SM$ as  
a star-shaped, $k$-convex, closed hypersurface in $\SM$ with $ \Ric (\nu, \nu) \le 0$. 
Combining results in this section and arguments in \cite[Section 3]{CPM-hd}, one knows that 
$g$ can be connected to a round metric within the space of  positive scalar curvature metrics on $\mathbb{S}^n$. 
Therefore, repeating the proof in \cite{CPM-hd},  we know that 
the conclusion of \cite[Theorem 1.2]{CPM-hd} holds for such a metric $g$.
\end{rema}

\section{Bartnik-Shi-Tam type  asymptotically flat  extensions} \label{sec-warped-metric}

Let  $\Sigma^n \subset \SM $ be a closed, star-shaped, $2$-convex hypersurface satisfying 
\be \label{eq-initial-Ric}
\bRic (\nu, \nu) \le 0 .
\ee
Here  $ \bRic(\cdot, \cdot) $ is the Ricci curvature of the Schwarzschild manifold  $\SM$ and $\nu$ is the outward unit normal to $ \Sigma$. 
By  Theorem \ref{thm1}, there exists a smooth solution $ \{ \Sigma_t \}_{ 0 \le t \le \infty }$, consisting of star-shaped hypersurfaces,  to 
\begin{align} \label{eq-2-flow}
\frac{\p X}{\p t} {}=\frac{n-1}{2n}\frac{\sigma_1}{\sigma_2} \nu 
\end{align}
with initial condition $ \Sigma_0 = \Sigma$.   
By Lemma \ref{lem-Ric}, condition \eqref{eq-initial-Ric} implies that 
the scalar curvature $R$ of each $ \Sigma_t $ is positive. 

Let $ \mathbb{E}$ denote the exterior of $ \Sigma$ in $ \SM$, which is swept by $ \{ \Sigma_t \}_{ 0 \le t \le \infty }$.
On $ \E$,  the Schwarzschild metric $\bg $ can be written as 
\begin{align*}
\bar{g}=f^2dt^2+g_t ,
\end{align*}
where $g_t$ is the induced metric on $\Sigma_t $ and 
$$f =\frac{n-1}{2n}\frac{\sigma_1}{\sigma_2 } > 0 . $$ 
Prompted  by Proposition \ref{prop-mono}, 
we are interested in a new  metric $g_\eta$  on $\E$,  which
  takes the form of 
\begin{align*}
g_\eta=\eta^2dt^2+g_t  ,
\end{align*}
and has zero scalar curvature.
Here $ \eta > 0 $ is a function on $ \E$. 

We first derive the equation for $ \eta$. Adopting the notations in Section \ref{sec-mono-static}, 
by (\ref{formula H,H_eta}), (\ref{H_eta}) and Gauss equation (\ref{Gauss-1}), we have
\begin{align*}
\frac{\partial}{\partial t}  H_\eta = &- \Delta \eta - \eta ( | A_\eta |^2 + Ric_{g_\eta} (\nu, \nu)) \\
= &  - \Delta \eta- \eta \left(\eta^{-2}f^2| \Ab |^2+\eta^{-2}f^2\sigma_2-\frac{R}{2} \right)\\
= &  - \Delta \eta - \eta^{-1}f^2 |\Ab |^2-\eta^{-1}f^2\sigma_2+\frac{R}{2}\eta .
\end{align*}
On the other hand,
\begin{align*}
\frac{\partial}{\partial t}  H_\eta =\frac{\partial}{\partial t} (\frac{f \Hb }{\eta})
=-\frac{f \Hb }{\eta^2}\frac{\partial \eta}{\partial t}+\frac{1}{\eta}\frac{\partial}{\partial t}(f\Hb ) .
\end{align*}
Thus
\begin{align*}
-\frac{f \Hb }{\eta^2}\frac{\partial \eta}{\partial t}+\frac{1}{\eta}\frac{\partial}{\partial t}(f \Hb )
= - \Delta \eta - u^{-1}f^2| \Ab |^2-\eta^{-1}f^2\sigma_2+\frac{R}{2}\eta
\end{align*}
i.e.
\begin{align}\label{equation u}
-\frac{\partial \eta}{\partial t}+\frac{\eta^2}{f \Hb }\Delta \eta =\frac{\eta^3R}{2 f \Hb }- \frac{\eta}{ f \Hb }
\left(f^2| \Ab |^2+f^2\sigma_2+\frac{\partial}{\partial t}(f \Hb )\right) .
\end{align}

\medskip

Equation \eqref{equation u} is  as the same as (5) in \cite{EMW}. 
The following assertion on the long time existence of $\eta$ on $ \E $ follows directly from 
\cite[Proposition 2]{EMW} and  Lemma \ref{lem-Ric}. 

\begin{lemm}\label{C^0 perturbed}
Let $\Sigma$ be a closed, star-shaped, $2$-convex hypersurface in $ \SM$  with $\bRic (\nu,\nu)\leq 0$.
Given any positive function $ \psi > 0 $ on $ \Sigma$, 
the solution to   (\ref{equation u}) 
with initial condition $ \eta |_{t=0} = \psi $
exists for all time and remains positive.
\end{lemm}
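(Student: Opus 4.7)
The plan is to follow the strategy of \cite[Proposition 2]{EMW}, observing that equation \eqref{equation u} is of precisely the form analyzed there. Rearranged, \eqref{equation u} reads
\begin{equation*}
\frac{\p \eta}{\p t} = \frac{\eta^2}{f\Hb}\Delta \eta - \frac{R}{2f\Hb}\eta^3 + \frac{1}{f\Hb}\left(f^2|\Ab|^2 + f^2\sigma_2 + \frac{\p}{\p t}(f\Hb)\right)\eta,
\end{equation*}
which, after pulling back via the flow to $\mathbb{S}^n$, is a scalar quasi-linear parabolic equation in $\eta$. The leading coefficient $\eta^2/(f\Hb)$ is strictly positive as long as $\eta$ is, since Theorem \ref{thm1} ensures that every $\Sigma_t$ remains $2$-convex, so $\sigma_1, \sigma_2 > 0$ and hence $f > 0$, $\Hb > 0$ along the entire flow.

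The crucial structural property for long time existence is that the coefficient of $\eta^3$ in the reaction term is strictly negative. This is exactly where Lemma \ref{lem-Ric} enters: the hypothesis $\bRic(\nu,\nu) \le 0$ on $\Sigma_0$ is preserved along \eqref{eq-2-flow}, and combined with $\sigma_2 > 0$ the Gauss equation $\sigma_2 = R/2 + \bRic(\nu,\nu)$ forces $R > 0$ on each $\Sigma_t$. Thus $-R/(2f\Hb) < 0$ throughout, and the remaining coefficients of the equation are uniformly controlled in $t$ by the decay estimates of Lemmas \ref{asymptotic second fundamental form} and \ref{renormalized metric decay}.

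With these sign and regularity inputs in hand, the argument of \cite[Proposition 2]{EMW} applies verbatim. Local existence for positive initial data is standard quasi-linear parabolic theory. Positivity is preserved because $\eta \equiv 0$ is a singular subsolution and a positive barrier can be propagated forward using the boundedness of the linear coefficient. A pointwise upper bound on $\eta$ is obtained by the maximum principle: at any interior maximum of $\eta$ one has $\Delta\eta \le 0$, so the negative cubic term $-R\eta^3/(2f\Hb)$ dominates once $\eta$ is large, which prevents blow-up. Combining these pointwise bounds with Schauder estimates extends the solution past any finite time.

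The main conceptual obstacle is the a priori upper bound — this is precisely the step that would fail if $R$ were permitted to vanish, because the cubic term would then no longer supply the damping needed, and $\eta$ could in principle escape to infinity in finite time. The hypothesis $\bRic(\nu,\nu) \le 0$ and Lemma \ref{lem-Ric} are included precisely to circumvent this obstacle; every other ingredient is either classical parabolic regularity or a direct consequence of the flow estimates of Section \ref{sec-flow}.
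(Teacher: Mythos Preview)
Your proposal is correct and matches the paper's approach exactly: the paper does not give an independent argument but simply observes that equation \eqref{equation u} coincides with equation (5) in \cite{EMW} and invokes \cite[Proposition 2]{EMW} together with Lemma \ref{lem-Ric} to guarantee $R>0$. Your write-up is essentially an unpacking of that citation, identifying the same structural features (positivity of $f\Hb$ from $2$-convexity, the negative sign of the cubic coefficient from $R>0$, and the maximum-principle upper bound) that make the EMW argument go through.
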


In what follows, we analyze the asymptotic behavior of  $g_\eta $. 

\subsection{$C^0$ estimate of $\eta$}
For the convenience of  estimating  $\eta$, we consider   
$$w= f^{-1} \eta . $$ 
By \eqref{equation u}, \eqref{H} and \eqref{Gauss-1}, it is easily seen that 
$w$ satisfies the  equation
\begin{align}\label{equation w}
-\frac{\partial w}{\partial t} +\frac{w^2}{ \Hb }\left(f\Delta w+2\nabla w\nabla f\right)=\frac{1}{2 \Hb }\left(fR-2\Delta f\right)(w^3-w) .
\end{align}

\begin{lemm}\label{C^0 w}
$w$ satisfies the  estimate
\begin{align*}
|w-1|\leq C\phi^{1-n} ,
\end{align*}
where $C$ depends only on $\Sigma_0$ and $n$.
\end{lemm}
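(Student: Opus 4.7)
The plan is to carry out a parabolic maximum-principle argument based on explicit barriers. The crucial observation is that $w \equiv 1$ is a stationary solution of \eqref{equation w}: it corresponds to $\eta = f$, i.e.\ $g_\eta = \bar g$, and the Schwarzschild metric is scalar flat. The desired estimate is therefore a statement about the rate at which $w$ converges to this steady state along the flow. I would seek barriers of the form $w_\pm = 1 \pm C\phi^{1-n}$ for a large constant $C$, show that $w_+$ is a supersolution and $w_-$ a subsolution of \eqref{equation w}, dominate the initial data $w|_{t=0} = \bar H/H$ on the compact $\Sigma_0$ by enlarging $C$, and apply the parabolic comparison principle on $\E$.

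The verification of the super/subsolution inequalities rests on the sharp asymptotic input from Section~\ref{sec-flow}. Lemma \ref{asymptotic second fundamental form} and Lemma \ref{renormalized metric decay} give $\phi\bar H = n + O(e^{-\alpha t})$, $\phi^2\sigma_2 = \binom{n}{2} + O(e^{-\alpha t})$, hence $f = \tfrac{\phi}{n}\bigl(1 + O(e^{-\alpha t})\bigr)$, with comparable control on tangential derivatives. The Gauss equation, combined with $\bar R = 0$ and $\bRic(\nu,\nu) = O(\phi^{-1-n})$ from the Schwarzschild curvature formulae (Lemma \ref{ADS curvature}), yields $\phi^2 R = n(n-1) + O(e^{-\alpha t})$. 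Along the flow $\partial_t\phi = \phi' v f \sim \phi/n$, so $\partial_t\phi^{1-n} \sim -\tfrac{n-1}{n}\phi^{1-n}$, while $\Delta\phi^{1-n}$ and $\nabla\phi^{1-n}\cdot\nabla f$ are $O(e^{-\alpha t}\phi^{1-n})$ because $\phi$ is nearly constant on each $\Sigma_t$. Plugging $w_+ = 1+C\phi^{1-n}$ into \eqref{equation w} and using $(w_+^3-w_+) = 2C\phi^{1-n}+O(\phi^{2-2n})$, the supersolution inequality at leading order reduces to
\[
\tfrac{n-1}{n}\, C\phi^{1-n} \ \geq \ \tfrac{fR-2\Delta f}{\bar H}\, C\phi^{1-n} + (\text{higher order errors}).
\]
Now $fR/\bar H = \tfrac{n-1}{n}\bigl(1 - \bRic(\nu,\nu)/\sigma_2\bigr) \geq \tfrac{n-1}{n}$ by the preserved assumption $\bRic(\nu,\nu)\leq 0$ (Lemma \ref{lem-Ric}), with a gap of size $\phi^{-1-n}$, while $\Delta f/\bar H = O(e^{-\alpha t})$. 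The subsolution inequality for $w_-$ is symmetric.

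The main obstacle is precisely this borderline nature of the comparison: the natural decay rate $(n-1)/n$ of the barrier $\phi^{1-n}$ along the flow exactly matches the linearisation coefficient $(fR-2\Delta f)/\bar H$ at leading order, so a naive application of the maximum principle fails. To make the argument rigorous I would exploit the higher-order $e^{-\alpha t}$ corrections of Lemma \ref{asymptotic second fundamental form}, the strict sign of $\bRic(\nu,\nu)\leq 0$, and if needed perturb the barrier to $1\pm C\phi^{1-n}\bigl(1+\varepsilon e^{-\alpha t}\bigr)$ to open the required margin; one may also bootstrap, first showing $|w-1|\to 0$ uniformly from Lemma \ref{C^0 perturbed} and then upgrading to the $\phi^{1-n}$ rate. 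Handling this bookkeeping—in particular the Laplacian estimate $\Delta f = O(\phi^{-1}e^{-\alpha t})$ with sharp constants—is the technical heart of the proof.
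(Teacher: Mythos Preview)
Your proposal identifies the right structure---$w\equiv 1$ is a fixed point, the linearised decay rate is $(n-1)/n$, matching $\phi^{1-n}$---but the barrier you choose, $w_\pm = 1\pm C\phi^{1-n}$, runs into exactly the borderline obstruction you flag, and you do not resolve it. The perturbation you sketch may be workable but would be delicate; in particular the $\bRic(\nu,\nu)$ gap is $O(\phi^{-1-n})$, which is smaller than the $O(e^{-\alpha t})$ errors unless one knows $\alpha$ precisely, so it is not clear this margin suffices.

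The paper sidesteps the borderline entirely by a different choice of barrier, following Shi--Tam. Instead of spatially varying barriers it uses \emph{spatially constant} functions $P(t)$, $Q(t)$ solving the Bernoulli ODE
\[
\dot P = \tfrac{1}{2}(P-P^3)\,A(t), \qquad A(t)=\min_{\Sigma_t}\frac{fR-2\Delta f}{\bar H},
\]
explicitly $P(t)=\bigl(1-C_1\exp(-\int_{t_0}^t A)\bigr)^{-1/2}$. At a spatial minimum of $P-w$ the gradient and Laplacian of $w$ have favourable signs, so \eqref{equation w} reduces to the ODE inequality; comparing with the barrier ODE and factoring $P-P^3+w^3-w$ gives $w\le P$. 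Because $A(t)=\tfrac{n-1}{n}+O(e^{-\alpha t})$, one gets $\exp(-\int A)\le C e^{-(n-1)t/n}\le C\phi^{1-n}$ directly, hence $P-1=O(\phi^{1-n})$. The lower bound is symmetric, with a minor case split according to whether $\min w\gtrless 1$ at $t_0$.

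The advantage of the paper's route is that the barrier matches the nonlinear ODE \emph{exactly}, not just to leading order, so no cancellation analysis or perturbation is needed; the spatial terms simply drop out at the extremum. Your approach would, if completed, yield the same conclusion, but at the cost of tracking second-order error terms that the ODE barrier never sees.
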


\begin{proof}
It suffices to focus on $ w$ for $ t \ge t_0$ where $ t_0$ is sufficiently large.
Following the steps in \cite{ST}, we define
\begin{align*}
A(t)=\min_{\Sigma_t}\frac{fR-2\Delta f}{ \Hb },\quad B(t)=\max_{\Sigma_t}\frac{fR-2\Delta f}{ \Hb } .
\end{align*}
By Lemma \ref{asymptotic second fundamental form}, Lemma \ref{ADS curvature} and Gauss equation \eqref{Gauss-1}, we have
\begin{align*}
\frac{fR-2\Delta f}{ \Hb }=\frac{n-1}{n}+Ce^{-\alpha t} ,
\end{align*}
thus both $A(t)$ and $B(t)$ are positive for $t\geq t_0$.

We first seek an  upper bound for $w$. Define
\begin{align*}
P(t)=\left(1-C_1\exp(-\int_{t_0}^tA(s)ds)\right)^{-\frac{1}{2}}
\end{align*}
with $C_1=1-(\max_{\Sigma_{t_0}} w+1)^{-2}$. It is  clear that $P-w\geq 0$ at $t_0$.
Taking derivative, we have
\begin{align*}
\frac{d}{dt}P(t)&=-\frac{1}{2}\left(1-C_1\exp(-\int_{t_0}^tA(s)ds)\right)^{-\frac{3}{2}}C_1\exp(-\int_{t_0}^tA(s)ds)A(t)\\
&=\frac{1}{2}P^3(P^{-2}-1)A=\frac{1}{2}( P- P^3)A .
\end{align*}
At the minimum point of $P-w$, we have
\begin{align*}
 \frac{d}{dt}\left(P-w\right)\leq 0,\quad \nabla w=0,\quad \nabla^2 w\leq 0 ,
\end{align*}
thus
\begin{align*}
0\geq \frac{1}{2}(P- P^3)A+\frac{1}{2 \Hb}\left(fR-2\Delta f\right)(w^3-w) .
\end{align*}
Since $A\leq \frac{fR-2\Delta f}{\Hb }$, we have 
\begin{align*}
0\geq P-P^3+w^3-w ,
\end{align*}
i.e. $ P-w\geq 0$ as $P\geq 1$.
Therefore,   $w\leq P$ for all time $t \ge t_0$. 

\medskip

Next, we  seek a lower bound of $w$. We consider  two cases.

\medskip

\noindent Case 1: $\min_{\Sigma_{t_0}} w\geq 1$. Define
\begin{align*}
Q(t)=\left(1+C_2\exp(-\int_{t_0}^tB(s)ds)\right)^{-\frac{1}{2}} ,
\end{align*}
where $C_2=(\min_{\Sigma_{t_0}} w)^{-2}-1$. It's clear that $w-Q\geq 0$ at $t_0$.
By a similar computation as above, we have
\begin{align*}
\frac{d}{dt}Q(t)=\frac{1}{2}(Q-Q^3)B .
\end{align*}
At the minimum point of $w-Q$, 
\begin{align*}
\frac{d}{dt}(w-Q)\leq 0,\quad \nabla w=0,\quad \nabla^2w\geq 0 .
\end{align*}
Thus
\begin{align*}
0\geq -\frac{1}{2 \Hb }\left(fR-2\Delta f\right)(w^3-w)-\frac{1}{2}(Q-Q^3)B .
\end{align*}
Since $B\geq \frac{fR-2\Delta f}{ \Hb }$, we have
\begin{align*}
0\geq w-w^3+Q^3-Q ,
\end{align*}
which implies  $w\geq Q$ as $Q\geq 1$.
Thus, $w\geq Q$ for all $t \ge t_0$. 

\medskip

\noindent Case 2:  $\min_{\Sigma_{t_0}} w< 1$. Define
\begin{align*}
\tilde{Q}(t)=\left(1+(C_2+\epsilon)\exp(-\int_{t_0}^t(A(s)-\epsilon)ds)\right)^{-\frac{1}{2}} .
\end{align*}
For $\epsilon$ small enough, we have
\begin{align*}
\tilde{Q}(t_0)=(1+C_2+\epsilon)^{-\frac{1}{2}}<\min_{\Sigma_{t_0}}w .
\end{align*}

Suppose now at some $t_1>t_0$, we have $\min_{\Sigma_{t_1} }  \left( w-\tilde{Q} \right) =0$ 
and, for $t_0\leq t\leq t_1$, we have $w-\tilde{Q}\geq 0$. Then at $t_1$, 
\begin{align*}
\frac{d}{dt}(w-\tilde{Q})\leq 0,\quad \nabla w=0,\quad \nabla^2w\geq 0 .
\end{align*}
Since
\begin{align*}
\frac{d}{dt}\tilde{Q}(t)=\frac{1}{2}(\tilde{Q}-\tilde{Q}^3)(A-\epsilon),
\end{align*}
we have 
\begin{align*}
0\geq -\frac{1}{2 \Hb }\left(fR-2\Delta f\right)(w^3-w)-\frac{1}{2}(\tilde{Q}-\tilde{Q}^3)(A-\epsilon).
\end{align*}
Since $A-\epsilon< \frac{fR-2\Delta f}{\Hb }$, the above implies 
\begin{align*}
0\geq \tilde{Q}-\tilde{Q}^3 .
\end{align*}
Contradict to the fact $\tilde{Q}<1$.
Since $\epsilon$ is arbitrary, we thus have 
\begin{align*}
w\geq \left(1+C_2\exp(-\int_{t_0}^tA(s)ds)\right)^{-\frac{1}{2}}.
\end{align*}

\medskip

Finally, note that $A(t)=\frac{n-1}{n}+O(e^{-\alpha t})$, we have
\begin{align*}
\exp(-\int_{t_0}^tA(t))\leq Ce^{-\frac{n-1}{n}t}\leq C\phi^{1-n} .
\end{align*}
Therefore, 
$ w\geq 1-C\phi^{1-n} $.
Similarly, 
$ w\leq 1+C\phi^{1-n} $.
Thus, we conclude 
\begin{align*}
|w-1|\leq C\phi^{1-n} .
\end{align*}
\end{proof}

\subsection{Asymptotic behavior of $w$}
Following  \cite{ST},  we consider the rescaled metric 
\begin{align*}
\tilde{g}_{ij}=\phi^{-2}g_{ij} 
\end{align*}
on each  $ \Sigma_t$. 
Here  we omit writing $t$  for the sake of  convenience. 
Note that by Lemma \ref{renormalized metric decay}, $\tilde{g}_{ij}$ converges to $\sigma_{ij}$ exponentially fast.

For any function $h$ and $l$, 
\begin{align*}
<\tilde{\nabla}h,\tilde{\nabla}l>_{\tilde{g}}=\phi^2<\nabla h,\nabla l>_g .
\end{align*}
Henceforth, for convenience, we  simply  write the above  as 
\begin{align*}
\tilde{\nabla}h\tilde{\nabla}l=\phi^2\nabla h\nabla l .
\end{align*}
Direct calculation gives 
\begin{align*}
\Delta=\phi^{-2}\tilde{\Delta}+(n-2)\phi^{-3} \tilde{\nabla}\phi\tilde{\nabla} .
\end{align*}
In terms of $\tilde g_{ij}$,   equation \eqref{equation w} becomes
\begin{align}\label{equation w-2}
\begin{split}
& \ 
-\frac{\partial w}{\partial t} +\frac{w^2}{\Hb \phi^2}\left( f\tilde{\Delta} w+(n-2)f\phi^{-1}\tilde{\nabla}\phi\tilde{\nabla}w+2\tilde{\nabla} w\tilde{\nabla} f\right) \\
= & \ \frac{1}{2 \Hb }\left(fR-2\phi^{-2}\tilde{\Delta} f-2(n-2)\phi^{-3}\tilde{\nabla}\phi\tilde{\nabla}f\right)(w^3-w) ,
\end{split} 
\end{align}
which can be re-written as 
\begin{align*}
\begin{split}
& \ -\frac{\partial w}{\partial t} +\tilde{\nabla}\left(\frac{fw^2}{ \Hb \phi^2}\tilde{\nabla} w\right)-\frac{2f}{H\phi^2}|\tilde{\nabla}w|^2 \\
= & \ w^2\tilde{\nabla}\left(\frac{f}{ \Hb \phi^2}\right)\tilde{\nabla}w-\frac{w^2}{ \Hb \phi^2}\left((n-2)f\phi^{-1}\tilde{\nabla}\phi
+2\tilde{\nabla} f\right)\tilde{\nabla}w\\
&+\frac{1}{2 \Hb }\left(fR-2\phi^{-2}\tilde{\Delta} f-2(n-2)\phi^{-3}\tilde{\nabla}\phi\tilde{\nabla}f\right)(w^3-w).
\end{split}
\end{align*}
By Lemma \ref{C^0 w}, this  is a uniformly parabolic PDE. In addition,  the term $-\frac{2f}{ \Hb \phi^2}|\tilde{\nabla}w|^2$ 
has a good sign and the coefficient of $\tilde{\nabla}w$ is uniformly bounded. Thus we may directly apply standard 
Moser iteration to conclude that $w\in C^\alpha$. 

By considering  the equation for $w-1$ and  applying Schauder estimate and Lemma \ref{C^0 w}, for any $k,l\geq 0$, we have
\begin{align}\label{high regu w}
|\left(\frac{\partial}{\partial t}\right)^k\tilde{\nabla}^l(w-1)|\leq C\phi^{1-n},
\end{align}
where $C$ depends only on $\Sigma_0,n$ and $k,l$.
As in \cite{ST}, we define
\begin{equation} \label{definition m}
\mathfrak{m}=\frac{1}{2}\phi^{n-1}(1-w^{-2}) .
\end{equation}

\begin{lemm} \label{estimate m}
There exists a constant $m_0$, such that
\begin{align*}
|\mathfrak{m}-m_0|+|\nabla_0\mathfrak{m}|+|\frac{\partial \mathfrak{m}}{\partial t}|\leq Ce^{-\alpha t} ,
\end{align*}
where $\nabla_0$ is the standard gradient on $\mathbb{S}^n$ and $C,\alpha$ depends only on $\Sigma_0$ and $n$.
\end{lemm}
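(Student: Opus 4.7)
The plan is to identify the candidate constant $m_0$ via the monotone quantity furnished by Proposition \ref{prop-mono}, then to establish the pointwise and derivative estimates by comparison with the explicit Schwarzschild profile $w_m := (1-2m\phi^{1-n})^{-1/2}$, which is precisely the family of spatially constant solutions to \eqref{equation w} in the round model geometry and satisfies $\mathfrak{m} \equiv m$ identically.

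\textbf{Identification of $m_0$.} Applying Proposition \ref{prop-mono} with $N = \phi'$, the integral $I(t) := \int_{\Sigma_t} N(\bar H - H_\eta)\,d\sigma$ is monotone nonincreasing along the flow. A direct computation using $\bar H - H_\eta = \bar H(1 - w^{-1})$, the definition \eqref{definition m} of $\mathfrak{m}$, the asymptotic behavior $\bar H \phi \to n$ from Lemma \ref{asymptotic second fundamental form}, $N \to 1$, and $d\sigma_{g_t} = (1+o(1))\phi^n\,d\sigma_\sigma$ from Lemma \ref{renormalized metric decay}, shows that $I(t) = n\int_{\mathbb{S}^n}\mathfrak{m}\,d\sigma_\sigma + O(\phi^{1-n})$. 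Combined with Lemma \ref{C^0 w}, which implies that the nonpositive right-hand side of Proposition \ref{prop-mono} is integrable in $t$ with exponential rate, this yields the existence of $m_0 := \lim_{t\to\infty} \omega_n^{-1}\int_{\mathbb{S}^n} \mathfrak{m}\,d\sigma_\sigma$, with $|\,\omega_n^{-1}\!\int_{\mathbb{S}^n} \mathfrak{m}\,d\sigma_\sigma - m_0| \le C e^{-\alpha t}$.

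\textbf{Derivative estimates.} Differentiating \eqref{definition m} in $t$ and substituting \eqref{equation w} gives
\[
\frac{\partial \mathfrak{m}}{\partial t} = \left[\frac{(n-1)\phi_t}{\phi} - \frac{fR-2\Delta f}{\bar H}\right]\mathfrak{m} + \frac{\phi^{n-1}w^{-1}}{\bar H}\bigl(f\Delta w + 2\nabla w\cdot\nabla f\bigr).
\]
The bracketed coefficient of $\mathfrak{m}$ decays like $e^{-\alpha t}$, because the flow equation \eqref{eq-2-flow} together with Lemma \ref{lemma 8} yields $\phi_t/\phi \to 1/n$, while $(fR - 2\Delta f)/\bar H \to (n-1)/n$ by the Gauss identity \eqref{Gauss-1} and Lemmas \ref{asymptotic second fundamental form}-\ref{renormalized metric decay}; both convergences are exponential in $t$. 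An analogous decomposition
\[
\nabla_0 \mathfrak{m} = \frac{(n-1)\phi'\nabla_0 r}{\phi}\mathfrak{m} + \phi^{n-1}w^{-3}\nabla_0 w
\]
reduces $|\nabla_0 \mathfrak{m}|$ to control on $\phi^{n-1}\nabla_0 w$, since $|\nabla_0 r|_\sigma = \phi|\nabla_0\varphi|_\sigma \le C\phi e^{-\alpha t}$ by Lemma \ref{C^1 estimate}. The remaining term $\phi^{n-1} w^{-3} \nabla_0 w$ and the ``source'' term $\frac{\phi^{n-1}w^{-1}}{\bar H}(f\Delta w + 2\nabla w\cdot\nabla f)$ are naively only $O(1)$; the exponential decay is extracted by writing $w = w_{m_0} + (w - w_{m_0})$. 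The Schwarzschild profile $w_{m_0}$ yields $\mathfrak{m} \equiv m_0$ exactly in the round model, and a direct verification shows that the corresponding terms evaluated at $w_{m_0}$ produce exact cancellations with the leading $O(1)$ contributions from $\frac{(n-1)\phi_t}{\phi}\mathfrak{m}$. The residual involves the deviations $w - w_{m_0}$ and $(\tilde g - \sigma)$, both of which are $O(e^{-\alpha t})$ after the improved estimate \eqref{high regu w} and Lemma \ref{renormalized metric decay}. Pointwise convergence $|\mathfrak{m} - m_0| \le Ce^{-\alpha t}$ then follows by combining the convergence of $\omega_n^{-1}\int_{\mathbb{S}^n} \mathfrak{m}\,d\sigma_\sigma$ to $m_0$ from Step 1 with the spatial oscillation bound $|\nabla_0 \mathfrak{m}| \le C e^{-\alpha t}$.

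\textbf{Main obstacle.} The essential difficulty is the cancellation of the $O(1)$ terms: naive application of \eqref{high regu w} bounds $\phi^{n-1}\Delta w$ and $\phi^{n-1}\nabla_0 w$ only by a constant. The improvement to $O(e^{-\alpha t})$ is obtained only by isolating the contribution of the exact model Schwarzschild profile $w_{m_0}$ (which satisfies the PDE in the round background exactly) and controlling the residual using the exponential convergence of the flow geometry to the round model established in Section \ref{sec-flow}.
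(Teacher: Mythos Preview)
Your Step 1 is correct: the monotonicity from Proposition \ref{prop-mono} together with the decay of its right-hand side does give exponential convergence of the spherical average $\omega_n^{-1}\int_{\mathbb{S}^n}\mathfrak{m}\,d\sigma_\sigma$ to a limit $m_0$. This is a legitimate (and rather elegant) way to identify the constant, different from the paper's.

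However, Step 2 has a genuine circularity. You claim that after subtracting the model profile $w_{m_0}$, ``the residual involves the deviations $w-w_{m_0}$ \ldots\ which are $O(e^{-\alpha t})$ after the improved estimate \eqref{high regu w}.'' But \eqref{high regu w} only asserts $|\tilde\nabla^l(w-1)|\le C\phi^{1-n}$, and since $w_{m_0}-1 = m_0\phi^{1-n}+O(\phi^{2-2n})$ is of the \emph{same} order, the difference $w-w_{m_0}$ and its derivatives are a priori only $O(\phi^{1-n})$, not smaller. Indeed, unwinding the definition gives $\phi^{n-1}(w-w_{m_0}) = (\mathfrak{m}-m_0)(1+O(\phi^{1-n}))$, so an improved bound on $w-w_{m_0}$ is \emph{equivalent} to the pointwise estimate $|\mathfrak{m}-m_0|\le Ce^{-\alpha t}$ you are trying to prove. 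Consequently, after your cancellation the term $\phi^{n-1}w^{-3}\nabla_0(w-w_{m_0})$ in the expression for $\nabla_0\mathfrak{m}$ is still only $O(1)$, and likewise the source term $\frac{\phi^{n-1}}{w\bar H}f\Delta(w-w_{m_0})$ in $\partial_t\mathfrak{m}$ remains $O(1)$. The argument does not close.

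The paper circumvents this by not trying to estimate $\partial_t\mathfrak{m}$ and $\nabla_0\mathfrak{m}$ separately. After computing $\partial_t\mathfrak{m}$ it observes that the single non-decaying piece is $\frac{\phi^{n-3}f}{\bar H w}\tilde\Delta w$; it then computes $\tilde\Delta\mathfrak{m}$ directly from \eqref{definition m} and finds $\tilde\Delta\mathfrak{m}=\phi^{n-1}w^{-3}\tilde\Delta w + p$, where $p$ denotes any term with $|(\partial_t)^k\tilde\nabla^l p|\le Ce^{-\alpha t}$. Substituting back produces a \emph{closed} parabolic equation
\[
\partial_t\mathfrak{m}=\frac{fw^2}{\bar H\phi^2}\tilde\Delta\mathfrak{m}+p=\frac{1}{n^2}\Delta_0\mathfrak{m}+p,
\]
a heat equation on $(\mathbb{S}^n,\sigma)$ with exponentially decaying source. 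All three estimates in the lemma then follow at once from the standard parabolic result recorded as Lemma 2.6 in \cite{ST}. The step you are missing is that the ``bad'' $O(1)$ term $\phi^{n-1}\tilde\Delta w$ should be \emph{converted into} $\tilde\Delta\mathfrak{m}$ rather than estimated directly; this closes the equation in $\mathfrak{m}$ and lets parabolic smoothing supply the decay.
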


\begin{proof}
By (\ref{high regu w}) and definition of $\mathfrak{m}$, for any $k,l\geq 0$, we have
\begin{align*}
|\left(\frac{\partial}{\partial t}\right)^k\tilde{\nabla}^l\mathfrak{m}|\leq C,
\end{align*}
where $C$ depends only on $\Sigma_0,n$ and $k,l$.
By (\ref{equation w-2}), $ \mathfrak{m}$ satisfies 
\begin{align*}
\frac{\partial \mathfrak{m}}{\partial t}=&\frac{n-1}{2}\phi^{n-2}\phi^\prime  (1-w^{-2})\frac{\partial r}{\partial t}+\phi^{n-1}w^{-3}\frac{\partial w}{\partial t}\\
=&\frac{n-1}{2}\phi^{n-2}\phi^\prime vf(1-w^{-2})+\frac{\phi^{n-3}}{ \Hb w}\left(f\tilde{\Delta} w+(n-2)f\phi^{-1}\tilde{\nabla}\phi\tilde{\nabla}w+2\tilde{\nabla} w\tilde{\nabla} f\right)\\
&-\frac{\phi^{n-1}}{2 \Hb w^3}\left(fR-2\phi^{-2}\tilde{\Delta} f-2(n-2)\phi^{-3}\tilde{\nabla}\phi\tilde{\nabla}f\right)(w^3-w) .
\end{align*}

Denote by $p$ any function that satisfies
\begin{align*}
|\left(\frac{\partial}{\partial t}\right)^k\tilde{\nabla}^lp|\leq Ce^{-\alpha t},
\end{align*}
for any $k,l\geq 0$, where $C,\alpha$ is uniform constants may depends on $k,l$. 
By Lemma \ref{C^1 estimate} and Lemma \ref{asymptotic second fundamental form}, we have
\begin{align*}
\phi^{-1}\tilde{\nabla}\phi=p,\quad \phi^{-1}\tilde{\nabla}f=p .
\end{align*}
Thus
\begin{align*}
\frac{\phi^{n-3}}{ \Hb w}f\phi^{-1}\tilde{\nabla}\phi\tilde{\nabla}w=\left(\phi^{n-1}\tilde{\nabla}w\right)\left(\phi^{-1}\tilde{\nabla}\phi\right)\frac{f}{\Hb w\phi^2}=p .
\end{align*}
Similarly,
\begin{align*}
\frac{\phi^{n-3}}{\Hb w}\tilde{\nabla} w\tilde{\nabla} f=\left(\phi^{n-1}\tilde{\nabla}w\right)\left(\phi^{-1}\tilde{\nabla}f\right)\frac{1}{\Hb w\phi}=p ,
\end{align*}
\begin{align*}
\frac{\phi^{n-1}}{\Hb w^3}\phi^{-2}\tilde{\Delta} f(w^3-w)=\left(\phi^{n-1}(1-w^{-2})\right)\left(\phi^{-1}\tilde{\Delta}f\right)\frac{1}{\Hb \phi}=p ,
\end{align*}
\begin{align*}
\frac{\phi^{n-1}}{\Hb w^3}\phi^{-3}\tilde{\nabla}\phi\tilde{\nabla}f(w^3-w)=\left(\phi^{n-1}(1-w^{-2})\right)\left(\phi^{-1}\tilde{\nabla}f\right)\left(\phi^{-1}\tilde{\nabla}\phi\right)\frac{1}{\Hb \phi}=p .
\end{align*}
Hence, 
\begin{align*}
\frac{\partial \mathfrak{m}}{\partial t}&=\frac{n-1}{2}\phi^{n-2}\phi^\prime vf(1-w^{-2})+\frac{\phi^{n-3}f}{\Hb w}\tilde{\Delta} w-\frac{\phi^{n-1}fR}{2\Hb}(1-w^{-2})+p .
\end{align*}
On the other hand, 
\begin{align*}
\begin{split}
& \ \frac{n-1}{2}\phi^{n-2}\phi^\prime vf(1-w^{-2})-\frac{\phi^{n-1}fR}{2 \Hb }(1-w^{-2}) \\
= & \ \frac{\phi^{n-1}(1-w^{-2})}{2}f\left((n-1)\phi^{-1}\phi^\prime v-\frac{R}{\Hb }\right)  = p, 
\end{split} 
\end{align*}
Therefore, 
\begin{align*}
\frac{\partial \mathfrak{m}}{\partial t}=\frac{\phi^{n-3}f}{ \Hb w}\tilde{\Delta} w+p .
\end{align*}

Note that
\begin{align*}
\tilde{\nabla}\mathfrak{m}=\frac{n-1}{2}\phi^{n-2}\tilde{\nabla}\phi(1-w^{-2})+\phi^{n-1}w^{-3}\tilde{\nabla}w ,
\end{align*}
and
\begin{align*}
\tilde{\Delta}\mathfrak{m}=&\frac{(n-1)(n-2)}{2}\phi^{n-3}|\tilde{\nabla}\phi|^2(1-w^{-2})+\frac{n-1}{2}\phi^{n-2}\tilde{\Delta}\phi(1-w^{-2})\\
&+2(n-1)\phi^{n-2}w^{-3}\tilde{\nabla}\phi\tilde{\nabla}w-3\phi^{n-1}w^{-4}|\tilde{\nabla}w|^2+\phi^{n-1}w^{-3}\tilde{\Delta}w .
\end{align*}
Thus, 
\begin{align*}
\tilde{\Delta}\mathfrak{m}=-3\phi^{n-1}w^{-4}|\tilde{\nabla}w|^2+\phi^{n-1}w^{-3}\tilde{\Delta}w+p=\phi^{n-1}w^{-3}\tilde{\Delta}w+p .
\end{align*}
Therefore, 
\begin{align*}
\frac{\partial \mathfrak{m}}{\partial t}&=\frac{\phi^{n-3}f}{\Hb w}\left(\phi^{1-n}w^3\tilde{\Delta}\mathfrak{m}+\phi^{1-n}w^3p\right)+p\\
&=\frac{fw^2}{ \Hb \phi^2}\tilde{\Delta} \mathfrak{m}+p=\frac{1}{n^2}\tilde{\Delta} \mathfrak{m}+p .
\end{align*}

By Lemma \ref{renormalized metric decay}, we have $\tilde{g}_{ij}=\sigma_{ij}+p$, where $\sigma_{ij}$ is the standard metric on $\mathbb{S}^n$. Thus $\tilde{\Delta} \mathfrak{m}=\Delta_0\mathfrak{m}+p$, where $\Delta_0$ is the standard Laplacian on $\mathbb{S}^n$.
Now, by Lemma 2.6 in \cite{ST}, we conclude that there exists a constant $m_0$, such that
\begin{align*}
|\mathfrak{m}-m_0|+|\nabla_0\mathfrak{m}|+|\frac{\partial \mathfrak{m}}{\partial t}|\leq Ce^{-\alpha t}.
\end{align*}
\end{proof}
 
Lemma \ref{estimate m} directly implies the following asymptotic expansion of $ w$.

\begin{lemm} \label{cor-1-w-2}
As $ t \to \infty$,  $w$ satisfies
\begin{align*}
w=1+m_0\phi^{1-n}+p ,
\end{align*}
where $p=O(\phi^{1-n-\alpha})$ and $|\nabla_0p|=O(\phi^{-n-\alpha})$. Here  $\nabla_0$ denotes  the standard gradient on $(\mathbb{S}^n, \sigma)$.
\end{lemm}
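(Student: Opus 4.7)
The plan is to solve algebraically for $w$ from the definition \eqref{definition m} of $\mathfrak{m}$, then invoke Lemma \ref{estimate m} to replace $\mathfrak{m}$ by the constant $m_0$ up to an exponentially decaying error, and finally convert the temporal decay factor $e^{-\alpha t}$ into a spatial decay factor in powers of $\phi$ using the estimates $c_0 e^{t/n} \le \phi \le C_0 e^{t/n}$ derived in Section \ref{sec-flow}.

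More precisely, rewriting \eqref{definition m} as $w^{-2} = 1 - 2\mathfrak{m}\phi^{1-n}$, and noting that $\mathfrak{m}$ is uniformly bounded by Lemma \ref{estimate m} while $\phi^{1-n} \to 0$, I would Taylor expand
\[
w = \left(1 - 2\mathfrak{m}\phi^{1-n}\right)^{-1/2} = 1 + \mathfrak{m}\phi^{1-n} + O(\phi^{2(1-n)})
\]
for $t$ large. Splitting $\mathfrak{m} = m_0 + (\mathfrak{m}-m_0)$ and using $|\mathfrak{m}-m_0| \le Ce^{-\alpha t}$ together with $e^{-\alpha t} \le C\phi^{-n\alpha}$, the remainder $p := w - 1 - m_0 \phi^{1-n}$ splits into two pieces: the term $(\mathfrak{m}-m_0)\phi^{1-n}$ of size $O(\phi^{1-n-n\alpha})$, and the higher order Taylor tail $O(\phi^{2(1-n)}) = O(\phi^{1-n-(n-1)})$. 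Since $n \ge 2$, both pieces are of the form $O(\phi^{1-n-\alpha'})$ after relabeling $\alpha$ to a smaller positive constant.

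For the gradient bound, I would differentiate the identity $w^{-2} = 1 - 2\mathfrak{m}\phi^{1-n}$ with respect to the standard coordinates on $\mathbb{S}^n$ to obtain
\[
\nabla_0 p = w^3 \phi^{1-n}\,\nabla_0 \mathfrak{m} + (w^3\mathfrak{m} - m_0)(1-n)\phi^{-n}\nabla_0\phi.
\]
The first term is controlled by $|\nabla_0 \mathfrak{m}| \le Ce^{-\alpha t}$ from Lemma \ref{estimate m}, again converted to a power of $\phi$. For $\nabla_0 \phi = \phi'(r)\nabla_0 r$, I would use the decay estimate $|\nabla_{\mathbb{S}^n} \hat r| \le Ce^{-\alpha t}$ (with $\hat r = r e^{-t/n}$) derived in the proof of Lemma \ref{renormalized metric decay}, giving $|\nabla_0 \phi| = O(\phi^{1-n\alpha})$. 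Combined with $w^3\mathfrak{m} - m_0 = O(\phi^{1-n}) + O(\phi^{-n\alpha})$, both contributions are of order $O(\phi^{-n-\alpha'})$ after again relabeling $\alpha$.

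This is essentially a routine bookkeeping argument; the only mild subtlety is keeping track of the various generic constants $\alpha$ and verifying they can be chosen uniformly. All the substantive work has already been done in Lemma \ref{estimate m} (decay of $\mathfrak{m}$ to a constant), Lemma \ref{C^0 w} (so that the Taylor expansion converges), and Lemma \ref{renormalized metric decay} (spherical decay of $\nabla_0 r$). I would not expect any new obstacle here, since the result is really just an algebraic repackaging of these earlier asymptotics in the form convenient for the subsequent ADM mass computation.
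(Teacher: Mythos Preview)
Your proposal is correct and is precisely the argument the paper has in mind: the paper's own proof is the single sentence ``Lemma \ref{estimate m} directly implies the following asymptotic expansion of $w$,'' and what you have written is exactly the algebraic unpacking of that implication --- inverting \eqref{definition m}, Taylor expanding, and trading $e^{-\alpha t}$ for powers of $\phi$ via Lemma \ref{C^0 estimate}. There is no alternative route here to compare; you have supplied the details the authors omitted.
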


\subsection{ADM mass of $g_\eta$}
We now  verify that the metric $g_\eta$ is asymptotically flat and 
we compute its ADM mass.  Note that
\begin{align*}
g_\eta=f^2dt^2+g_t+(\eta^2-f^2)dt^2=\bar{g}+(w^2-1)f^2dt^2,
\end{align*}
where $\bar{g}$ is the metric on the  Schwarzschild manifold  $ \SM$ with mass  $m$.

Let $r$ be the radial coordinate in \eqref{equation SM}. 
Let $ z = (z_1, \ldots, z_{n+1})$ denote the standard rectangular coordinates on the background Euclidean space
$$ \mathbb{R}^{n+1} = \left( [ 0 , \infty) \times \mathbb{S}^n , d r^2 + r^2 \sigma \right) . $$
Writing  $ \bar g = \bg_{ij} d z_i d z_j $ and $g_\eta=g_{ij}dz_idz_j$,  we  have 
\begin{align} \label{g-bg}
g_{ij}=\bar{g}_{ij}+b_{ij},
\end{align}
where 
\begin{align*}
b_{ij}=(w^2-1)f^2\frac{\partial t}{\partial z_i}\frac{\partial t}{\partial z_j}.
\end{align*}
We need to analyze the term $\frac{\partial t}{\partial z_i}$. As $r=|z|$, 
\begin{equation*}
\partial_{z_i} =  \frac{ z_i}{r}  \partial_r + ( \partial_{z_i} )^T ,
\end{equation*}
where $ ( \partial_{z_\alpha} )^T  $ is tangential to $\mathbb{S}^n$.
By definition, 
\begin{equation*}
\frac{\partial t}{\partial z_i}=d t(\partial_{z_i})  = \langle \bar \nabla t, \partial_{z_i} \rangle_{\bar g} = f^{-1} \langle \nu , \partial_{z_i} \rangle_{\bar g} .
\end{equation*}
Plugging in  $ \nu = \frac{1}{v} (\partial_r - \frac{r^j\partial_j}{\phi^2}) $, we have 
\begin{equation}\label{partial t partial i}
\begin{split}
\frac{\partial t}{\partial z_i}
= & \ \frac{1}{fv}  \left(\frac{ z_i}{r}    - \frac{r^j}{\phi^2} \langle  \partial_j  ,  ( \partial_{z_i} )^T \rangle_{\bar g}  \right) \\
= & \ \frac{1}{fv}  \left(\frac{ z_i}{r}   - r^j\langle  \partial_j  ,  ( \partial_{z_i} )^T \rangle_{\sigma}  \right) .
\end{split}
\end{equation}
Thus, 
\begin{equation*} 
b_{ij}= \frac{w^2-1}{v^2}  \left(  \frac{ z_i}{r}   - r^k\langle  \partial_k  ,  ( \partial_{z_i} )^T \rangle_{\sigma} \right)\left(  \frac{ z_j}{r}   - r^l\langle  \partial_l  ,  ( \partial_{z_j} )^T \rangle_{\sigma} \right) . 
\end{equation*}
By  Lemma \ref{cor-1-w-2}, Lemma \ref{C^1 estimate} and the fact that $|( \partial_{z_i} )^T|\leq \frac{1}{r}$, we have 
\begin{equation*}
| b_{ij} | = O ( | z |^{1-n} ). 
\end{equation*}
Similarly computation gives 
\begin{equation*}
| z | | \partial_z b_{ij} | + | z |^2 | \partial_z \partial_z  b_{ij} |  = O ( | z |^{1-n} ).
\end{equation*}
This  shows  that $ g_\eta$ is asymptotically flat. 

\begin{lemm}\label{ADM mass}
The ADM mass of $g_\eta=\eta^2dt^2+g_t$ equals $m + m_0$.
\end{lemm}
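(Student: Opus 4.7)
The plan is to compute the ADM mass of $g_\eta$ directly from the definition, exploiting the decomposition \eqref{g-bg}: $g_\eta = \bar g + b$. Since $\bar g$ is the Schwarzschild metric on $\SM$ with ADM mass $m$ in the usual sense, and since $|b_{ij}| + |z||\partial b| + |z|^2|\partial^2 b| = O(|z|^{1-n})$, the ADM mass of $g_\eta$ is well-defined and the leading boundary integral splits additively into the $\bar g$-part (contributing $m$) and a $b$-part. It will suffice to show the latter contributes exactly $m_0$.

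To compute the leading asymptotic form of $b_{ij}=(w^2-1)f^2\,(\partial t/\partial z_i)(\partial t/\partial z_j)$, I would combine several ingredients: (i) the expansion $w = 1 + m_0 \phi^{1-n} + p$ with $p = O(\phi^{1-n-\alpha})$, $|\nabla_0 p| = O(\phi^{-n-\alpha})$ from Lemma \ref{cor-1-w-2}; (ii) the relation $\phi(r) = r + O(r^{2-n})$ obtained from integrating \eqref{equation phi}; (iii) the exponential decay $v-1 \to 0$ from Lemma \ref{C^1 estimate}; (iv) the sharp asymptotic $F\phi \to n$ from Lemma \ref{asymptotic second fundamental form}, which forces $f \sim \phi/n$; and (v) the formula \eqref{partial t partial i} for $\partial t/\partial z_i$, whose leading behavior is $f\cdot(\partial t/\partial z_i)\to z_i/|z|$. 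Multiplying these contributions yields
\begin{equation*}
b_{ij} = \frac{2m_0\, z_i z_j}{|z|^{n+1}} + O(|z|^{-n-\alpha'}),
\end{equation*}
together with analogous decay for $\partial_z b$ and $\partial_z^2 b$.

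With this in hand I would apply the ADM mass formula, which in $(n+1)$ dimensions reads
\begin{equation*}
m_{ADM}(g_\eta)-m = \frac{1}{2n\omega_n}\lim_{r\to\infty}\int_{|z|=r}\sum_{i,j}(\partial_i b_{ij}-\partial_j b_{ii})\frac{z_j}{|z|}\,dA.
\end{equation*}
A short direct calculation using $\sum_i z_i^2 = |z|^2$ and $\partial_i(|z|^{-n-1}) = -(n+1)|z|^{-n-3} z_i$ gives $\sum_i(\partial_i b_{ij}-\partial_j b_{ii}) = 2m_0 n\,|z|^{-n-1} z_j + \text{l.o.t.}$, so that the pointwise integrand becomes $2m_0 n\,|z|^{-n}+\text{l.o.t.}$ and integration over the sphere of radius $r$ (area $\omega_n r^n$) produces $2m_0 n \omega_n$ in the limit. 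Dividing by $2n\omega_n$ yields $m_0$, so $m_{ADM}(g_\eta) = m + m_0$.

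The main obstacle is translating the mixed $(t,x)$-decay estimates on $w$, $v$, $f$, and the induced geometry of $\Sigma_t$ (which are naturally stated along the flow) into genuine Euclidean-coordinate decay for $b_{ij}$ and its first two $z$-derivatives, with the error terms decaying strictly faster than the explicit $z_i z_j/|z|^{n+1}$ piece after differentiation. Concretely, one needs $|\partial_z^k(\textrm{error in }b_{ij})| = o(|z|^{-n-k})$ for $k=0,1,2$, which requires deploying the higher-regularity exponential-decay estimates for $\varphi$ (from the Evans–Krylov/Schauder step in the proof of Lemma \ref{asymptotic second fundamental form}) and for $w$ (inherited from Lemma \ref{estimate m}), together with the correspondence $\phi \sim e^{t/n}$ from Lemma \ref{C^0 estimate}. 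Once these bookkeeping estimates are in place, the rest of the proof is routine calculus.
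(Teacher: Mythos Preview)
Your proposal is correct and follows essentially the same approach as the paper: both decompose $g_\eta=\bar g+b$, use the ADM integral for $b$ alone (since $\bar g$ contributes $m$), and extract the leading asymptotics of $b_{ij}$ from the expansions of $w$, $f$, and $\partial t/\partial z_i$ to obtain the contribution $m_0$. The only organizational difference is that the paper differentiates $b_{ij}$ via the product rule and estimates each resulting group of terms separately, whereas you first reduce $b_{ij}$ to $2m_0 z_iz_j/|z|^{n+1}$ plus lower order (the error is actually $O(|z|^{1-n-\alpha'})$, not $O(|z|^{-n-\alpha'})$, but this is harmless) and then differentiate; the ``main obstacle'' you flag---controlling $z$-derivatives of the error---is exactly what the paper's term-by-term estimates handle.
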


\begin{proof}
The ADM mass of $g_\eta$ is given  by
\begin{align*}
\frac{1}{2 n \omega_n} 
\lim_{r\rightarrow \infty}\int_{\mathbb{S}^n}\left(\frac{\partial {g_\eta}_{ij}}{\partial z_i}-\frac{\partial {g_\eta}_{ii}}{\partial z_j}\right)r^{n-1}z_j d\sigma .
\end{align*}
By \eqref{g-bg} and the fact that the ADM mass of $\bg $ is $m$, the above limit is equal to
\begin{align} \label{mass geta}
m + \frac{1}{2 n \omega_n} \lim_{r\rightarrow \infty}\int_{\mathbb{S}^n}\left(\frac{\partial b_{ij}}{\partial z_i}-\frac{\partial b_{ii}}{\partial z_j}\right)r^{n-1}z_j d\sigma.
\end{align}
Thus it suffices  to calculate $\frac{\partial b_{ij}}{\partial z_i}$ and $\frac{\partial b_{ii}}{\partial z_j}$. We have
\begin{align*}
\frac{\partial b_{ij}}{\partial z_i} =2wf^2\frac{\partial w}{\partial z_i}\frac{\partial t}{\partial z_i}\frac{\partial t}{\partial z_j}
+2(w^2-1)f\frac{\partial f}{\partial z_i}\frac{\partial t}{\partial z_i}\frac{\partial t}{\partial z_j}
+(w^2-1)f^2\left(\frac{\partial^2 t}{\partial z_i^2}\frac{\partial t}{\partial z_j}+\frac{\partial t}{\partial z_i}\frac{\partial^2 t}{\partial z_j\partial z_i}\right).
\end{align*}
Similarly,
\begin{align*}
\frac{\partial b_{ii}}{\partial z_j}=2wf^2\frac{\partial w}{\partial z_j}\left(\frac{\partial t}{\partial z_i}\right)^2+2(w^2-1)f\frac{\partial f}{\partial z_j}\left(\frac{\partial t}{\partial z_i}\right)^2+2(w^2-1)f^2\frac{\partial t}{\partial z_i}\frac{\partial^2 t}{\partial z_j\partial z_i} .
\end{align*}
Thus,
\begin{align*}
\frac{\partial b_{ij}}{\partial z_i}-\frac{\partial b_{ii}}{\partial z_j}=
&2wf^2\frac{\partial t}{\partial z_i}\left(\frac{\partial w}{\partial z_i}\frac{\partial t}{\partial z_j}-\frac{\partial w}{\partial z_j}\frac{\partial t}{\partial z_i}\right)
+2(w^2-1)f\frac{\partial t}{\partial z_i}\left(\frac{\partial f}{\partial z_i}\frac{\partial t}{\partial z_j}-\frac{\partial f}{\partial z_j}\frac{\partial t}{\partial z_i}\right)\\
&+(w^2-1)f^2\left(\frac{\partial^2 t}{\partial z_i^2}\frac{\partial t}{\partial z_j}-\frac{\partial t}{\partial z_i}\frac{\partial^2 t}{\partial z_j\partial z_i}\right)
\end{align*}
By Lemma \ref{cor-1-w-2}, we have
\begin{align*}
\frac{\partial w}{\partial z_i}=(1-n)m_0\phi^{-n-1}z_i+O(\phi^{-n-\alpha}).
\end{align*}
By Lemma \ref{C^1 estimate} and (\ref{partial t partial i}), we have
\begin{align*}
\frac{\partial t}{\partial z_i}=\frac{1}{f}\frac{z_i}{\phi}+O(\phi^{-1-\alpha}).
\end{align*}
Therefore, 
\begin{align*}
2wf^2\frac{\partial t}{\partial z_i}\left(\frac{\partial w}{\partial z_i}\frac{\partial t}{\partial z_j}-\frac{\partial w}{\partial z_j}\frac{\partial t}{\partial z_i}\right)=O(\phi^{-n-\alpha}).
\end{align*}
On other other hand, by Lemma \ref{asymptotic second fundamental form} and straightforward computation,
\begin{align*}
\frac{\partial f}{\partial z_i}=\frac{z_i}{n\phi}+O(\phi^{-\alpha}).
\end{align*}
Thus,
\begin{align*}
2(w^2-1)f\frac{\partial t}{\partial z_i}\left(\frac{\partial f}{\partial z_i}\frac{\partial t}{\partial z_j}-\frac{\partial f}{\partial z_j}\frac{\partial t}{\partial z_i}\right)=O(\phi^{-n-\alpha}).
\end{align*}
Again by Lemma \ref{asymptotic second fundamental form}, Lemma \ref{C^1 estimate} and (\ref{partial t partial i}) , we have
\begin{align*}
\frac{\partial^2 t}{\partial z_i^2}=\frac{n(n-2)}{\phi^2}+O(\phi^{-2-\alpha}), \ 
\frac{\partial^2 t}{\partial z_i\partial z_j}=-\frac{2n}{\phi^4}z_iz_j+O(\phi^{-2-\alpha}).
\end{align*}
Thus,
\begin{align*}
(w^2-1)f^2\left(\frac{\partial^2 t}{\partial z_i^2}\frac{\partial t}{\partial z_j}-\frac{\partial t}{\partial z_i}\frac{\partial^2 t}{\partial z_j\partial z_i}\right) 
=& \ 2m_0\phi^{1-n}f^2\frac{n^2}{\phi^2}\frac{z_j}{f\phi} \\ 
= & \ 2nm_0\phi^{-1-n}z_j+O(\phi^{-n-\alpha}).
\end{align*}
Therefore, we conclude 
\begin{align*}
\frac{\partial b_{ij}}{\partial z_i}-\frac{\partial b_{ii}}{\partial z_j}=2nm_0\phi^{-1-n}z_j+O(\phi^{-n-\alpha}), 
\end{align*}
which implies that the ADM mass of $g_\eta$ is $ m + m_0 $ by \eqref{mass geta}. 
\end{proof}

\begin{rema}
A more geometric way to compute  the  ADM mass of $g_\eta$ is as follows.  
The  foliation $\{ \Sigma_t \}$  is a family of  nearly  round hypersurfaces  according to  Definition 2.1 in \cite{MTX}.
Thus, if $\mathfrak{m} (g_\eta) $ is  the  mass of $g_\eta$, then by Theorem 1.2  in \cite{MTX}, 
\begin{equation}
\begin{split}
\mathfrak{m} (g_\eta)  =  & \ \lim_{ t \rightarrow \infty} \frac{1}{2 n (n-1) \omega_n} \left( \frac{ | \Sigma_t | }{ \omega_n} \right)^\frac{1}{n} 
\int_{\Sigma_t} \left( R - \frac{n-1}{n} H^2_\eta \right) d \sigma \\
 =  & \ \lim_{ t \rightarrow \infty} \frac{1}{2 n (n-1) \omega_n} \left( \frac{ | \Sigma_t | }{ \omega_n} \right)^\frac{1}{n} 
\int_{\Sigma_t} \left( R - \frac{n-1}{n} \Hb^2 + \frac{n-1}{n} ( 1 - w^{-2} ) \Hb^2 \right) d \sigma  \\
=  & \ m + m_0 ,
\end{split}
\end{equation}
where we have  used   the fact $\bar g$ has mass $m$ and 
\begin{equation} \label{eq-1-w-negative-2}
 \lim_{ t \rightarrow \infty} \int_{\Sigma_t}   ( 1 - w^{-2} ) \Hb^2  d \sigma  = 2 n^2 \omega_n  m_0  ,
\end{equation}
which follows from Lemma \ref{cor-1-w-2}, Lemma  \ref{asymptotic second fundamental form} and Lemma
 \ref{renormalized metric decay}.
\end{rema}

\begin{lemm}\label{asymptotic integral}
\begin{align*}
\lim_{t\rightarrow \infty}\int_{\Sigma_t}N( \Hb -H_\eta) d \sigma =\lim_{t\rightarrow \infty}\int_{\Sigma_t} N \Hb (1-w^{-1})d \sigma =nm_0\omega_n
\end{align*}
\end{lemm}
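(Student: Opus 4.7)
The first equality is essentially definitional: by \eqref{formula H,H_eta} we have $H_\eta=\eta^{-1}f\Hb$, and since $w=f^{-1}\eta$, this immediately gives $H_\eta=w^{-1}\Hb$, hence $\Hb-H_\eta=\Hb(1-w^{-1})$. So the first equality requires no further work, and the task reduces to computing the limit of $\int_{\Sigma_t}N\Hb(1-w^{-1})\,d\sigma$.

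The plan for the second equality is to substitute the asymptotic expansions supplied by the preceding lemmas into the integrand and extract the leading-order contribution, showing that all errors vanish in the limit. Specifically, I would use the following four expansions: (i) by Lemma \ref{cor-1-w-2}, $1-w^{-1}=m_0\phi^{1-n}+O(\phi^{1-n-\alpha})$; (ii) by Lemma \ref{asymptotic second fundamental form}, the principal curvatures satisfy $\kappa_i=\phi^{-1}+O(\phi^{-1-\alpha})$, so $\Hb=n\phi^{-1}+O(\phi^{-1-\alpha})$; (iii) by Lemma \ref{renormalized metric decay}, $d\sigma=\phi^n\,d\sigma_{\mathbb{S}^n}(1+O(e^{-\alpha t}))$ where $d\sigma_{\mathbb{S}^n}$ is the standard area element on $(\mathbb{S}^n,\sigma)$; and (iv) since $N=\phi'=\sqrt{1-2m\phi^{1-n}}$ by \eqref{equation phi}, we have $N=1+O(\phi^{1-n})$.

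Multiplying these four expansions together, the dominant term in $N\Hb(1-w^{-1})\,d\sigma$ is
\begin{equation*}
1\cdot n\phi^{-1}\cdot m_0\phi^{1-n}\cdot\phi^n\,d\sigma_{\mathbb{S}^n}=nm_0\,d\sigma_{\mathbb{S}^n},
\end{equation*}
and every cross term among the error pieces multiplies to something of order $O(\phi^{-\alpha})$ (using $\phi\sim e^{t/n}$ from Lemma \ref{C^0 estimate}, so $e^{-\alpha t}\lesssim\phi^{-n\alpha}$). Integrating over $\mathbb{S}^n$, the leading contribution is exactly $nm_0\omega_n$, while the integrated error is $O(\phi^{-\alpha})=o(1)$ since $\mathbb{S}^n$ has finite area. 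Taking $t\to\infty$ concludes the lemma.

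There is no serious obstacle here; the argument is essentially careful bookkeeping of error terms. The only place that requires a moment's care is matching orders: the potentially dangerous cross terms arise from pairing $\Hb$ with the next-order correction to $(1-w^{-1})$ (yielding $\phi^{-1}\cdot\phi^{1-n-\alpha}\cdot\phi^n=\phi^{-\alpha}$), and from pairing the error in $d\sigma$ with the leading product (yielding $\phi^{-n}\cdot\phi^{n-n\alpha}=\phi^{-n\alpha}$); both decay to zero uniformly on $\mathbb{S}^n$, so pass to the limit without issue.
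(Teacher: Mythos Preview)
Your proposal is correct and follows exactly the approach the paper takes: the paper's own proof simply says that the result is a direct consequence of Lemma \ref{cor-1-w-2}, Lemma \ref{asymptotic second fundamental form}, and Lemma \ref{renormalized metric decay}, and you have filled in precisely those details. The bookkeeping of error terms you describe is accurate, and the reduction of the first equality to $H_\eta = w^{-1}\Hb$ via \eqref{formula H,H_eta} is correct.
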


\begin{proof}
Similar to \eqref{eq-1-w-negative-2},  this is a direct consequence of 
 Lemma \ref{cor-1-w-2}, Lemma  \ref{asymptotic second fundamental form} and Lemma \ref{renormalized metric decay}.
\end{proof}

We summarize the results in Lemmas \ref{C^0 perturbed}, \ref{cor-1-w-2},   \ref{ADM mass} and  \ref{asymptotic integral}
in the following theorem.

\begin{theo} \label{BST extension} 
Let  $\Sigma^n \subset \SM $ be a closed, star-shaped, $2$-convex hypersurface with  
$ \bRic (\nu, \nu) \le 0 $, where  $ \bRic(\cdot, \cdot) $ is the Ricci curvature of  $\SM$ and $\nu$ is the outward unit normal to $ \Sigma$. 
Let $ \mathbb{E}$ denote the exterior of $ \Sigma^n$ in $ \SM$,  which is swept by a family of star-shaped hypersurfaces $ \{ \Sigma_t \}_{ 0 \le t \le \infty }$ 
that is   a smooth solution  to 
\begin{align*} 
\frac{\p X}{\p t} {}=\frac{n-1}{2n}\frac{\sigma_1}{\sigma_2} \nu 
\end{align*}
with initial condition $ \Sigma_0 = \Sigma^n $.  
On $ \E$, writing  the Schwarzschild metric $\bg $ as 
\begin{align*}
\bar{g}=f^2dt^2+g_t ,
\end{align*}
where $g_t$ is the induced metric on $\Sigma_t $ and 
$f =\frac{n-1}{2n}\frac{\sigma_1}{\sigma_2 }  $.  
Then, given any smooth  function $ \psi > 0 $ on $ \Sigma$, 
there exists a smooth function $ \eta > 0 $ on $ \E$ such that 
\begin{enumerate}
\item[(i)]  $ \eta |_{\Sigma} = \psi$, 
the metric $g_\eta = \eta^2 dt^2 + g_t $ has zero scalar curvature, and 
$ \eta $ satisfies 
$$ f^{-1} \eta =1+m_0\phi^{1-n}+p , $$
where $m_0$ is a constant, $p=O(\phi^{1-n-\alpha})$ and $|\nabla_0p|=O(\phi^{-n-\alpha})$; 

\item[(ii)]  the Riemannian manifold $(\E,   g_\eta)$ is asymptotically flat;  and 

\item[(iii)] the ADM mass $\m (g_\eta)$ of $g_\eta$ is given by 
\begin{align*}
 \m(g_\eta)  =   m + m_0 
 =  m + \lim_{t\rightarrow \infty} \frac{1}{ n \omega_n} \int_{\Sigma_t}N(\Hb -H_\eta)d \sigma .
 \end{align*}
\end{enumerate}
\end{theo}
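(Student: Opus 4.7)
The plan is to assemble Theorem \ref{BST extension} from the lemmas developed in Sections \ref{sec-flow} and \ref{sec-warped-metric}. First, I would observe that imposing $R(g_\eta) = \bar R = 0$ on $g_\eta = \eta^2 dt^2 + g_t$ is equivalent to the parabolic equation \eqref{equation u} for $\eta$. Prescribing $\eta|_{\Sigma} = \psi > 0$ as the initial condition, Lemma \ref{C^0 perturbed} (which quotes \cite[Proposition 2]{EMW} applied with the curvature sign given by Lemma \ref{lem-Ric}) provides a positive smooth solution $\eta$ on all of $\E$. Smoothness propagates by standard parabolic regularity since the coefficients are smooth and the equation is uniformly parabolic on bounded intervals.

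Next, I would substitute $w = f^{-1}\eta$ and work with the rewritten equation \eqref{equation w}, which upon passing to the rescaled metric $\tilde g_{ij} = \phi^{-2} g_{ij}$ becomes the uniformly parabolic equation \eqref{equation w-2}. The uniform ellipticity follows from the $C^0$ bound of Lemma \ref{C^0 w} (upper/lower barriers $P(t), Q(t), \tilde Q(t)$ built from the signs of $A(t), B(t)$). Applying Moser iteration and Schauder estimates in the scale-invariant form \eqref{high regu w}, together with the mass function $\mathfrak m$ from \eqref{definition m} and Lemma \ref{estimate m}, I would then extract the expansion
\[
w = 1 + m_0 \phi^{1-n} + p, \qquad p = O(\phi^{1-n-\alpha}), \quad |\nabla_0 p| = O(\phi^{-n-\alpha}),
\]
which is exactly the conclusion of Lemma \ref{cor-1-w-2}. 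This proves (i).

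For (ii), I would write $g_\eta = \bar g + b_{ij} dz_i dz_j$ with $b_{ij} = (w^2-1) f^2 \frac{\partial t}{\partial z_i}\frac{\partial t}{\partial z_j}$, and use the asymptotic formula \eqref{partial t partial i} for $\partial t / \partial z_i$ together with the $C^1$ decay of $\varphi$ (Lemma \ref{C^1 estimate}) and the bound $|(\partial_{z_i})^T| \le 1/r$ to show $|b_{ij}| + |z| |\partial_z b_{ij}| + |z|^2 |\partial^2_z b_{ij}| = O(|z|^{1-n})$. Since $\bar g$ is asymptotically flat, so is $g_\eta$. For (iii), I would invoke Lemma \ref{ADM mass}, which computes the ADM mass contribution coming from $b_{ij}$ directly through the ADM surface integral: the leading terms involving $\partial w / \partial z_i$ and $\partial f / \partial z_i$ contribute only $O(\phi^{-n-\alpha})$, while the term $(w^2-1) f^2 (\partial^2_i t \cdot \partial_j t - \partial_i t \cdot \partial^2_{ij} t)$ contributes $2 n m_0 \phi^{-1-n} z_j + O(\phi^{-n-\alpha})$, yielding a mass increment $m_0$ on top of $m$. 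Finally, Lemma \ref{asymptotic integral} identifies $n \omega_n m_0$ with $\lim_{t \to \infty} \int_{\Sigma_t} N(\Hb - H_\eta) d\sigma$, closing the proof.

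The only step that is not essentially bookkeeping is the sharpness of the asymptotic expansion in Lemma \ref{cor-1-w-2}; this in turn rests on Lemma \ref{estimate m}, whose key point is the identity $\partial_t \mathfrak m = n^{-2}\tilde \Delta \mathfrak m + p$ after many cancellations of leading-order terms (using that the ratios $\phi^{-1}\tilde\nabla\phi$ and $\phi^{-1}\tilde\nabla f$ decay exponentially). That cancellation, together with \cite[Lemma 2.6]{ST} applied on the nearly-round sphere with metric $\tilde g = \sigma + p$, is the technical heart of the argument; everything else is integration of the established lemmas and a direct computation of the ADM surface integral.
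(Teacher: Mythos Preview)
Your proposal is correct and follows exactly the paper's approach: Theorem \ref{BST extension} is stated in the paper as a summary of Lemmas \ref{C^0 perturbed}, \ref{cor-1-w-2}, \ref{ADM mass}, and \ref{asymptotic integral}, and you have accurately identified and assembled these ingredients in the right order, including the role of Lemma \ref{estimate m} (and \cite[Lemma 2.6]{ST}) as the technical core behind the expansion of $w$.
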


\begin{rema}
Since $(\E, g_\eta) $   is foliated by $ \{ \Sigma_t \}_{ 0 \le t \le \infty }$,
which has positive mean curvature for each $t$, 
the boundary  $ \p \E = \Sigma $ is  outer minimizing in $(\E, g_\eta)$, meaning that  
 $\Sigma$  minimizes area among all hypersurfaces in $\E$ that 
enclose $ \Sigma $.
\end{rema}

\section{Geometric Applications} \label{sec apps}
In this section, we give  applications of results  in Sections \ref{sec-mono-static} -- \ref{sec-warped-metric}. 
First, we prove  Theorem \ref{thm-intro-main}.

\begin{proof}[Proof of Theorem \ref{thm-intro-main}]
Let $(\Omega^{n+1},\fg)$ be a compact manifold given   in Theorem \ref{thm-intro-main}.
By  assumptions   (i), (ii) and  the standard geometric measure theory,  $\Sh$  minimizes area among 
all closed hypersurfaces in $(\Omega, \fg)$ that encloses $\Sh$.

Let $ \E $ denote the exterior of $ \Sigma^n $ in $ \SM$. 
Let $ \eta > 0 $ be the smooth function  on $ \E $ given by Theorem \ref{BST extension}
with an initial condition 
\be \label{H-fH}
 \eta |_{\Sigma} = f |_\Sigma  \fH^{-1} H_m .
\ee
This condition  implies  that the mean curvature of $ \Sigma$ in $(\E, g_\eta)$ agrees 
with the mean curvature $\fH$ of $ \So$ in $(\Omega, \fg)$. 
Since $ \So$ is isometric to $ \Sigma = \p \E$, we can attach   $(\E, g_\eta)$  to  $(\Omega, \fg) $ along $ \Sigma = \So $ 
by  matching the  Gaussian neighborhood of $\Sigma $ in $(\E, g_\eta)$ to 
that of $\So$ in   $(\Omega, \fg )$.
Denote the resulting manifold by  $\hat M$ and its metric by $\hat h$. 
By construction,  $\hat h$ is Lipschitz across $\Sigma$ and smooth everywhere else on $\hat M$;
$ \hat h$  has nonnegative scalar curvature away from $\Sigma$; and the mean curvature of $ \Sigma$
from both sides in $(\hat M, \hat h) $ agree.
Moreover,  $ \partial {\hat M} = \Sh $ is a minimal hypersurface that is 
outer minimizing in $ (\hat M, \hat h)$. This  outer minimizing property of $ \Sh$  is guaranteed by the 
fact that  $ \Sigma$ is outer minimizing in $(\E, g_\eta) $ and 
$\Sh$ minimizes area among  closed hypersurfaces in $(\Omega, \fg)$ that encloses $\Sh$.
On such an $(\hat M, \hat h)$, it is known that the Riemannian Penrose inequality, i.e. Theorem \ref{thm-intro-RPI},  still holds.
(For a proof of this claim, see page 279-280 in \cite{Miao09} for the case $n=2$ and  Proposition  3.1 in \cite{MM} for $ 2 \le n \le 6$).  
Therefore, we have
\be \label{eq-app-RPI}
\m (g_\eta) \ge \frac12 \left( \frac{|\Sh |}{\omega_{n}}  \right)^{\frac{n-1}{n}}. 
\ee 
By  (iii) in Theorem \ref{BST extension}, this gives
\be \label{pf 1}
m + \lim_{t\rightarrow \infty} \frac{1}{ n \omega_n} \int_{\Sigma_t}N(\Hb -H_\eta) 
d \sigma   \ge \frac12 \left( \frac{|\Sh |}{\omega_{n}}  \right)^{\frac{n-1}{n}}. 
\ee

On the other hand,
since $ \frac{\partial N}{\p \nu} > 0 $ as  $ \Sigma_t $ is star-shaped and
$ \Sigma_t $ has  positive $\sigma_1$ and  $ \sigma_2$ in $\SM$, 
Corollary \ref{cor-1} applies with $ (\N, \bg)$ given by $ \SM$ to show  that 
$$  \int_{\Sigma_t}N( \Hb -H_\eta)d \sigma  $$ is monotone nonincreasing. 
At $\Sigma = \Sigma_0 $, we have $ H_m = \Hb$ and $ H = H_\eta$. 
Therefore, 
\begin{equation} \label{pf 2}
\begin{split} 
  \int_{\Sigma }N(H_m - \fH )d \sigma  = & \  \int_{\Sigma_0}N( \Hb - H_\eta)d \sigma  \\
   \ge & \  \lim_{t\rightarrow \infty}  \int_{\Sigma_t}N( \Hb -H_\eta)d \sigma .
\end{split}
\end{equation}
Therefore, it follows from \eqref{pf 1} and \eqref{pf 2} that
\be \label{pf main}
m + \frac{1}{ n \omega_n}  \int_{\Sigma }N( H_m - \fH )d \sigma  \ge \frac12 \left( \frac{|\Sh |}{\omega_{n}}  \right)^{\frac{n-1}{n}},
\ee
which proves \eqref{eq-intro-main}. If equality in \eqref{pf main} holds, 
then $  \int_{\Sigma_t}N( \Hb -H_\eta)d \sigma  $ is a constant for all $t$. 
By Corollary \ref{cor-1}, we have  $ \eta = f $ on $ \E$,  hence $ \fH = H_m $ by \eqref{H-fH}.
Consequently, 
$$ m =  \frac12 \left( \frac{|\Sh |}{\omega_{n}}  \right)^{\frac{n-1}{n}} .$$
This completes the proof of Theorem \ref{thm-intro-main}. 
\end{proof}

\begin{rema}
We conjecture that, when equality in \eqref{eq-intro-main} holds, 
$(\Omega, \fg)$ is isometric to the domain enclosed  by  $\Sigma$ and the horizon boundary $\Sh^S$ in $\SM$. 
It is clear from the  above proof that in this case  \eqref{eq-app-RPI} becomes equality. 
Thus, if one can establish the rigidity statement for  the Riemannian Penrose inequality  
on  manifolds with corners (cf. \cite{Miao02, ST, MS}), then this conjecture will follow.
\end{rema}

Next, we note an implication of Corollary \ref{cor-1} and Theorem \ref{BST extension} on
the concept of Bartnik mass \cite{Bartnik}. Given a pair $(g, H)$, 
where  $g$ is a metric  and $H$ is a function on $\mathbb{S}^2$, 
the Bartnik mass of $(g, H)$, which we denote by $ \mB(g, H) $, can be defined by 
\begin{align*}
\mB  (g,H)  = \inf \left\{  \m (h ) \, \vert \, (M, h ) \text{\ is  an  admissible extension of }(\mathbb{S}^2 , g, H) \right\}.
\end{align*}
Here $\m (h)$ is the ADM mass of $ (M, h)$ which is an asymptotically flat  $3$-manifold with boundary $\p M$.   
$(M, h)$  is called an \emph{admissible extension 
of $(\mathbb{S}^2 ,g,H)$} provided  $(M, h)$ has nonnegative scalar curvature,  $ \p M$  is isometric to 
$(\mathbb{S}^2,g)$, and the mean curvature of $ \p M$ in $(M, h)$ equals   $H$ under the identification 
of $ \p M$ with $ (\mathbb{S}^2, g)$ via the isometry.  Moreover, it is assumed that  either $(M, h)$ 
 contains no closed minimal surfaces or  $\p M$ is outer minimizing in $(M, h)$ (see \cite{Bartnik, B, B04,HI01}). 

\begin{theo} \label{thm Bmass}
Given a pair $(g, H)$  on  $ \mathbb{S}^2$, suppose $H > 0 $ and $(\mathbb{S}^2, g)$ is isometric to 
a closed, star-shaped, convex surface $\Sigma$ with  $ \bRic (\nu, \nu) \le 0 $ in a spatial 
Schwarzschild manifold $ \SM$ with $m>0$. Then
$$ \mB (g, H) \le  m + \frac{1}{ 8 \pi } \int_{\Sigma}N( H_m-H )  \, d \sigma . $$
\end{theo}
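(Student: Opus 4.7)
The plan is to produce an admissible extension whose ADM mass realizes the claimed upper bound, and then invoke the definition of $\mB$. The natural candidate is the Bartnik--Shi--Tam type asymptotically flat manifold $(\E, g_\eta)$ supplied by Theorem \ref{BST extension}, where $\E$ is the exterior of $\Sigma$ in $\SM$ and the boundary data for $\eta$ is chosen to force the mean curvature of $\Sigma$ in $(\E, g_\eta)$ to be exactly $H$. Recalling the warping relation $H_\eta = \eta^{-1} f \Hb$ from \eqref{formula H,H_eta} and that $\Hb|_\Sigma = H_m$, the correct initial condition is
\begin{equation*}
\eta|_\Sigma \,=\, f|_\Sigma \, H_m \, H^{-1},
\end{equation*}
which is smooth and positive because $H > 0$.

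With this choice, Theorem \ref{BST extension} produces $(\E, g_\eta)$ with zero scalar curvature, asymptotic flatness, and $H_\eta|_\Sigma = H$. I claim this is an admissible extension of $(\mathbb{S}^2, g, H)$ in the sense required by the definition of $\mB$: the induced boundary metric is isometric to $(\mathbb{S}^2, g)$ via the given isometry, the boundary mean curvature equals $H$, the scalar curvature vanishes, and by the remark following Theorem \ref{BST extension} the boundary $\Sigma$ is outer minimizing since $(\E, g_\eta)$ is foliated by the star-shaped surfaces $\Sigma_t$ of positive mean curvature. Hence
\begin{equation*}
\mB(g, H) \,\le\, \m(g_\eta).
\end{equation*}

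To evaluate $\m(g_\eta)$, I apply item (iii) of Theorem \ref{BST extension} with $n = 2$ to write
\begin{equation*}
\m(g_\eta) \,=\, m + \lim_{t \to \infty} \frac{1}{8\pi} \int_{\Sigma_t} N(\Hb - H_\eta)\, d\sigma.
\end{equation*}
Then I invoke the monotonicity of Corollary \ref{cor-1} in the static background $\SM$: the static potential $N = \phi'$ satisfies $\partial N/\partial \nu > 0$ along each star-shaped $\Sigma_t$, and the flow surfaces $\Sigma_t$ remain $2$-convex throughout by the curvature estimates of Section \ref{sec-flow} together with Lemma \ref{lem-Ric} (which also gives $\bRic(\nu,\nu) \le 0$ along the flow). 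Thus $\int_{\Sigma_t} N(\Hb - H_\eta)\, d\sigma$ is nonincreasing in $t$, so the limit is at most its value at $t = 0$, which by the choice of $\eta|_\Sigma$ equals $\int_\Sigma N(H_m - H)\, d\sigma$. Combining the two displayed inequalities yields the theorem.

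The only subtle point is the admissibility check, specifically that $\Sigma$ be outer minimizing in $(\E, g_\eta)$; this is not automatic for a Bartnik--Shi--Tam style construction in general, but here it is an immediate consequence of the foliation by mean-convex leaves $\Sigma_t$ produced by the inverse curvature flow \eqref{eq-2-flow}, and was already recorded as a remark following Theorem \ref{BST extension}. All other steps are direct bookkeeping of the results established in Sections \ref{sec-mono-static}--\ref{sec-warped-metric}.
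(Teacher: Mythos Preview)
Your proof is correct and follows essentially the same approach as the paper: construct the Bartnik--Shi--Tam extension $(\E,g_\eta)$ from Theorem \ref{BST extension} with initial data $\eta|_\Sigma = f|_\Sigma H_m H^{-1}$ so that $H_\eta|_\Sigma = H$, use its admissibility to bound $\mB(g,H)$ by $\m(g_\eta)$, and then combine the mass formula (iii) with the monotonicity of Corollary \ref{cor-1} to obtain the stated estimate. Your explicit verification of the outer-minimizing condition via the mean-convex foliation is exactly the content of the remark following Theorem \ref{BST extension}, which the paper invokes more tersely.
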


\begin{proof}
Taking $ n = 2 $ in Theorem \ref{BST extension}, let $ \eta $ be the function given on $ \E$ with an initial condition 
$  \eta |_{\Sigma} = f |_\Sigma  H^{-1} H_m $.  The asymptotically flat manifold $(\E , g_\eta)$ is an admissible extension of $(\mathbb{S}^2, g, H)$.
Therefore, by (iii) in Theorem \ref{BST extension}, 
\be \label{Bmass 1}
\begin{split}
 \mB (g, H)  \le & \ \m (g_\eta) 
 =   m + \lim_{t\rightarrow \infty} \frac{1}{ 8 \pi } \int_{\Sigma_t}N( \Hb -H_\eta) \, d \sigma .
\end{split}
\ee
By Corollary \ref{cor-1}, 
\begin{equation} \label{Bmass 2}
\begin{split} 
  \int_{\Sigma }N(H_m - H ) \, d \sigma  
   \ge  \lim_{t\rightarrow \infty}  \int_{\Sigma_t}N( \Hb -H_\eta) \, d \sigma .
\end{split}
\end{equation}
Theorem \ref{thm Bmass} follows from \eqref{Bmass 1} and \eqref{Bmass 2}.
\end{proof} 

\begin{rema}
Indeed our method shows  
the following  is true -- 
given a pair $(g, H)$  on  $ \mathbb{S}^2$, 
suppose $(\mathbb{S}^2, g)$ is isometric to the boundary of a static,   asymptotically flat manifold 
$(\N^3, \bg)$ with a positive static potential $N$. Suppose  $(\N, \bg)$ satisfies:
\begin{enumerate} 
\item[(i)] $ \Sigma = \p \N$ has positive $ \sigma_1$ and $ \sigma_2$; 
\item[(ii)] the inverse curvature flow 
\eqref{eq-intro-flow-sigma} in $(\N, \bg)$, with initial condition $ \Sigma_0 = \Sigma$, 
admits a long time, smooth solution $ \{ \Sigma_t \}_{0 \le t < \infty}$ with $ \frac{\p N}{\p \nu} > 0 $;  and 
\item[(iii)] the warped metric $g_\eta$ defined by  \eqref{perturbed metric}, satisfying  $ R(g_\eta) = 0 $ on $ \N$
and $ H_\eta = H$ at $ \Sigma$, can be constructed on $\N$  such that  $ g_\eta$ 
is asymptotically flat with 
$$  \m(g_\eta) 
 =  \m(\bg)  + \lim_{t\rightarrow \infty} \frac{1}{ 8 \pi } \int_{\Sigma_t}N(\Hb -H_\eta) \, d \sigma .$$
\end{enumerate}
Then, by Corollary \ref{cor-1}, the Bartnik mass of $(g, H)$ satisfies 
\be \label{eq-Bmass-bound-g}
 \mB (g, H) \le  \m (\bg) + \frac{1}{ 8 \pi } \int_{\Sigma}N( \Hb - H ) \, d \sigma .
\ee
Here  $ \m (\bg)$ is the ADM mass of $(\N, \bg)$ and $\Hb$ is the mean curvature of $ \Sigma $ in $(\N, \bg)$.
Estimate \eqref{eq-Bmass-bound-g} appeared as  Conjecture 4.1 in \cite{Miao-ICCM-07}.
\end{rema}

\section{Limits along isomeric embeddings of large spheres into Schwarzschild manifolds}
\label{sec-iso-emb}

In this section, we prove Theorem \ref{thm-intro-iso-emb} which was inspired by the results of Fan, Shi and Tam \cite{FST}.
We divide the proof into two parts, the existence of the embedding and the calculation of the limits. 

\subsection{Isometric Embedding of large spheres}
In \cite{Nirenberg}, Nirenberg shows that a $2$-sphere with positive Gauss curvature can be isometrically embedded in $ \mathbb{R}^3$ 
as a strictly convex surface. 
By adopting  the iteration scheme used in the proof of the openness part in \cite{Nirenberg}, 
one can verify  that a perturbation of a standard round sphere can be isometrically embedded in a $3$-dimensional Schwarzschild 
manifold with small mass. This assertion, which is the main tool we use in this section, 
is  indeed a special case of  \cite[Theorem 1]{LW} (see also \cite{CPM}).

\begin{prop}[\cite{CPM, LW}]  \label{prop-iso}
Let $ \sigma$ be the standard  metric on $ \mathbb{S}^2$ with area $4 \pi $. There exists $\epsilon > 0 $ and $ \delta > 0 $, 
such that if $ \tilde \sigma$  is a metric on $ \mathbb{S}^2$ with $ \| \tilde \sigma - \sigma \|_{C^{2,\alpha}} < \epsilon $
and  if $m$ is a  positive constant with  $  m < \delta $, then 
there exists an isometric embedding $\tilde X$ of $(\mathbb{S}^2, \tilde \sigma)$ in 
\be \label{eq-SM-rot}
 \mathbb{M}^3_m = \left( [2m, \infty) \times \mathbb{S}^2,  \frac{1}{1 - \frac{2m}{\rho} } d \rho^2 + \rho^2 \sigma \right). 
 \ee
 Moreover, $ \tilde X$ can be chosen so that 
\be \label{est-embedding}
 \| \tilde X - X \|_{C^{2,\alpha} } \le C \| \tilde \sigma - \sigma \|_{C^{2, \alpha} } .
 \ee
Here $ X$ is the isometric embedding of $(\mathbb{S}^2, \sigma)$  in $ \mathbb{M}^3_m$
given by $ X (\omega ) = (1, \omega)$, $ \forall \ \omega \in \mathbb{S}^2$. 
\end{prop}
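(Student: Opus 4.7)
The plan is to apply the implicit function theorem to the map sending an embedding of $\mathbb{S}^2$ to its induced metric, with both the mass $m$ and the target metric $\tilde\sigma$ treated as parameters. First I would pass to isotropic coordinates, so that $\bar g_m = (1 + m/(2|x|))^4 g_{_E}$ is a smooth family in $m$ (including $m = 0$, where $\bar g_0 = g_{_E}$); in these coordinates the reference embedding $X$ of $(\mathbb{S}^2,\sigma)$ becomes, after a smooth rescaling $r = r(m)$ with $r(0) = 1$, the standard unit sphere in $\mathbb{R}^3$. This puts the problem into a form where the ambient metric is a smooth deformation of the flat metric and the $m=0$ theory of \cite{Nirenberg} is directly available.

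Next, consider the nonlinear map $\Phi(m, Y) = Y^* \bar g_m$ defined on a neighborhood of $(0, X)$ in $\mathbb{R} \times \mathrm{Emb}^{2,\alpha}(\mathbb{S}^2, \mathbb{R}^3)$, with values in $C^{2,\alpha}$ symmetric $(0,2)$-tensors on $\mathbb{S}^2$. A direct computation of the $Y$-linearization at $(0, X)$, using that the unit sphere in $\mathbb{R}^3$ has second fundamental form $A = \sigma$, gives
\begin{equation*}
(D_Y \Phi)_{(0, X)} \cdot V \;=\; \mathcal{L}_{V^T}\sigma - 2 f \sigma, \qquad V = V^T + f\nu.
\end{equation*}
This is precisely the Nirenberg linearization: it is Fredholm of index zero, and its six-dimensional kernel is the infinitesimal Euclidean isometries restricted to the sphere. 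After fixing a gauge, for instance by requiring $\tilde X$ to match a prescribed orthonormal frame at a base point (or by imposing $L^2$-orthogonality to the rigid-motion directions), it becomes a Banach-space isomorphism on the gauge slice. The implicit function theorem then produces a unique gauge-fixed $\tilde X = \tilde X(m, \tilde\sigma)$ with $\tilde X^*\bar g_m = \tilde\sigma$ for $(m, \tilde\sigma)$ in a neighborhood of $(0, \sigma)$. Openness of convexity, star-shapedness, and the sign condition $\bRic(\nu,\nu) < 0$ ensures these features are inherited from the base embedding. Continuous dependence of $(D_Y\Phi)_{(m, X_m)}^{-1}$ on $m$, visible from the smooth $m$-dependence of $\bar g_m$ and Fredholm stability of invertibility, then yields
\begin{equation*}
\|\tilde X - X\|_{C^{2,\alpha}} \;\leq\; C\, \|\tilde\sigma - \sigma\|_{C^{2,\alpha}}
\end{equation*}
with $C$ independent of $m \in [0, \delta)$ for sufficiently small $\delta > 0$.

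The hard part will be the surjectivity (modulo rigid motions) of the $m=0$ linearization, which is exactly the content of the openness half of \cite{Nirenberg}; I would invoke it as a black box rather than reprove it, as indicated in the remark preceding the statement. A secondary technical point is reconciling the $\rho$-form of $\SM$ appearing in the statement with the isotropic form needed to realize $\bar g_m$ as a smooth deformation of $g_{_E}$, but this is a smooth change of variable on the horizon complement and affects none of the estimates.
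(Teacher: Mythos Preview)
The paper does not give its own proof of this proposition: it is quoted from \cite{CPM, LW}, with the comment that it can be verified ``by adopting the iteration scheme used in the proof of the openness part in \cite{Nirenberg}.'' Your implicit-function-theorem outline is in the same spirit and is a correct strategy, so there is little of substance to compare.

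Two technical points deserve tightening. First, the linearization $V\mapsto \mathcal{L}_{V^T}\sigma - 2f\sigma$ has six-dimensional kernel (the restrictions of Euclidean Killing fields) and, by Nirenberg's openness analysis, is \emph{surjective}; its Fredholm index is therefore $+6$, not zero. This does not affect your argument, since you correctly quotient by rigid motions, but the phrase ``Fredholm of index zero'' should be dropped. Second, the nonlinear map $Y\mapsto Y^*\bar g_m$ loses one derivative, so a naive Banach implicit function theorem between $C^{2,\alpha}$ embeddings and $C^{2,\alpha}$ metrics does not close as written; the linearized operator is first order in $V^T$ but zeroth order in $f$, and is not classically elliptic. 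What Nirenberg actually supplies is a right inverse for the linearized system that \emph{regains} the lost derivative, by reducing the three first-order equations to a single second-order elliptic equation for a scalar. Since you already plan to invoke Nirenberg's openness step as a black box, the fix is simply to state explicitly that the linearized solution operator maps $C^{k,\alpha}$ data to $C^{k+1,\alpha}$ solutions, so that the iteration (or IFT) closes in a fixed H\"older class and yields the estimate \eqref{est-embedding} with a constant uniform in $m\in[0,\delta)$.
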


\begin{rema}
Estimate \eqref{est-embedding} is not stated in the statement of  theorems in \cite{CPM, LW},
but it follows from both proofs therein. 
\end{rema}

We now consider an asymptotically flat $3$-manifold $(M, \fg)$  given in Theorem \ref{thm-intro-iso-emb}. 
Precisely, this means that, outside a compact set, $ M$ is diffeomorphic to $ \mathbb{R}^3$ 
minus a ball and with respect to the standard coordinates on $\mathbb{R}^3$, 
$\fg$  satisfies  $  \fg_{ij} = \delta_{ij} + p_{ij} $
 with
 \begin{equation} \label{eq-AF3}
  | p_{ij} | + | x | | \partial p_{ij} | + | x |^2 | \partial \partial  p_{ij} | + | x |^3 | \partial \partial \partial  p_{ij} | = O ( | x |^{-\tau} )
\end{equation}
for some constant $\tau > \frac12 $, 
where $ \partial $ denotes the partial derivative.  
Moreover, it is assumed that the scalar curvature  of $ \fg $ is integrable on $(M, \fg)$. Under such assumptions, 
the ADM mass of $(M, \fg)$ is well defined, which  we will denote  by $ \m $. 

Given a large constant $ r > 0 $,  let $ S_r = \{ | x | = r \}$ denote the  coordinate sphere in 
$(M, \fg)$.  Let $g_r$ be  the induced metric on $S_r$ and let $ \tilde g_r = r^{-2} g_r$.
Identifying $ S_r$ with  $ \mathbb{S}^2 = \{ \ | y | = 1 \}$ via a map $ y = r^{-1} x $, 
one  can deduce from  \eqref{eq-AF3} that 
\begin{align} \label{eq-tgr}
\|\tilde{g}_r-\sigma\|_{C^3}\leq Cr^{-\tau}
\end{align}
(see (2.17) in \cite{FST} for instance). 
Here  
$ \sigma$ is the standard  metric on $ \mathbb{S}^2 $ with area $4 \pi$ 
and  $ C> 0 $ is a constant  independent on $ r$. 

Let $ m > 0$ be any fixed constant. Define $ m_r = r^{-1} m $. 
Applying  Proposition \ref{prop-iso} and \eqref{eq-tgr}, we conclude, for sufficiently large $r$, 
there exists an isometric embedding 
$$ \tXr : (\mathbb{S}^2, \tilde g_r) \longrightarrow \mathbb{M}^3_{m_r} 
= \left( [2 m_r, \infty) \times \mathbb{S}^2,  \frac{1}{1 - \frac{2m_r}{\rho } } d \rho^2 + \rho^2 \sigma \right)  $$
satisfying 
\be \label{eq-txr}
\| \tXr - \Xz \|_{C^{2, \alpha}}   = O (r^{-\tau} ) ,
\ee
where  $ \Xz (\omega ) = (1, \omega) $, $ \forall \ \omega \in \mathbb{S}^2 $. 
It follows from  \eqref{eq-txr} that  $ \tXr (\mathbb{S}^2)$ is star-shaped and convex; moreover, if 
$ \tilde \nu_r $ is  the outward unit normal to $ \tXr (\mathbb{S}^2 )$,
then 
\be \label{eq-nur}
\tilde \nu_r =  \left( 1 +  O ( r^{- \tau} ) \right) \partial_\rho + O ( r^{- \tau} ) \partial_{\omega_1} + O ( r^{- \tau} ) \partial_{\omega_2} ,
\ee
where  $\omega_i$, $ i = 1, 2$, are local coordinates on $ \mathbb{S}^2$.  

Let $ \bRic_r $ denote  the  Ricci curvature of $\mathbb{M}^3_{m_r}$. In the rotationally symmetric 
form,  it  is given by 
$$
\bRic_r   =   m_r \rho^{-3}  \Psi ,
$$
where
$$
\Psi =  - \frac{2}{1 - \frac{2 m_r} {\rho} } d \rho^2 + \rho^2 \sigma  .
$$
By  \eqref{eq-txr}  and \eqref{eq-nur},
\be \label{eq-Ricr-nur}
\begin{split}
\bRic _r(\tilde \nu_r,  \tilde \nu_r) = & \ m_r \rho^{-3} \Psi (\tilde \nu_r,  \tilde \nu_r) \\
= & \ - 2 m_r  \left( 1 +  O ( r^{- \tau} ) \right)   .
\end{split}
\ee
In particular,  $ \bRic_r ( \tilde \nu_r,  \tilde \nu_r) < 0 $ for  large $r$.

The  map $ \tXr$ leads   to an isometric embedding of $(S_r, g_r)$ in $\mathbb{M}^3_m$
because   $ \mathbb{M}^3_{m}$ and $\mathbb{M}^3_{m_r}$ only differer by a constant scaling.
More precisely,  consider the map 
$$ F_r : \mathbb{M}^3_{m_r} \longrightarrow \mathbb{M}^3_{m}$$
where
$ F_r ( \rho, \omega) = ( r \rho , \omega) $. 
Define  $ X_r = F_r \circ \tXr $, 
then 
$$X_r: (S_r, g_r) \longrightarrow \mathbb{M}^3_m$$ 
is an isometric embedding such that  $X_r (S_r) $ is  a star-shaped, convex surface with 
\be \label{eq-Ric-nur} 
\begin{split}
\bRic  (\nu_r, \nu_r) =  - 2 m r^{-3}   \left( 1 +  O ( r^{- \tau} ) \right)   .
\end{split}
\ee
Here $ \nu_r $  is the outward unit normal to $ X_r (\mathbb{S}^2) $ in $ \mathbb{M}^3_m$
and \eqref{eq-Ric-nur} follows from \eqref{eq-Ricr-nur}.
Thus, we have proved the first part of Theorem \ref{thm-intro-iso-emb} on the existence of 
the desired isometric embedding of $(S_r, g_r)$ into $\mathbb{M}^3_m$. 

\subsection{Evaluation of the limits}
To prove the remaining part  of Theorem \ref{thm-intro-iso-emb}, 
we write $ X_r  = ( \rho_r , \theta_r  )$. By  \eqref{eq-txr}, we have 
\be \label{eq-l-Xr}
\| \rho_r - r \|_{C^{2, \alpha} } =  O (r^{1-\tau}).
\ee 
 Similarly, if  $H_m$ denotes  the mean curvature of $X_r (S_r) $ in $ \mathbb{M}^3_{m}$, 
 then \eqref{eq-txr} gives 
 \be \label{eq-Hm-r}
H_m  = {2}r^{-1} + O ( r^{- \tau - 1 } ) . 
\ee

We first compute  $\int_{S_r}NHd\sigma$,
where  $N = \left( 1 - \frac{2m}{\rho} \right)^\frac12 $ is the static potential on $\mathbb{M}^3_m$
and $ H$ is  the mean curvature of $S_r$ in $(M, \fg)$.
Let $ A(r)$ be the area of $(S_r, g_r)$.
By  \cite[Lemma 2.1]{FST}, 
\begin{align*}
H= {2}r^{-1}+O(r^{-1-\tau}) \ \ \mathrm{and} \  \  A(r)=4\pi r^2+O(r^{2-\tau}) . 
\end{align*}
By \cite[Lemma 2.2]{FST}, 
\begin{align*}
\int_{S_r}H d\sigma = r^{-1} A(r) +4\pi r-8\pi \m +o(1) .
\end{align*}
Therefore,  by \eqref{eq-l-Xr}, 
\begin{equation}  \label{limit-1}
\begin{split}
\int_{S_r}NHd \sigma = & \ 
\int_{S_r} \left(1- {m}r^{-1} \right)H d\sigma +o(1) \\
= & \ r^{-1} {A(r)} + 4\pi r-8\pi \m - 8 \pi m +o(1) . 
\end{split}
\end{equation}

Next, we compute   $\int_{S_r}NH_m d \sigma$. 
Identifying   $ S_r $ with its image  $ \Sigma_r = X_r (S_r)  $,
we carry out the computation  in  $\mathbb{M}^3_m$. 
Following notations   in Section \ref{sec-flow}, 
we  rewrite the Schwarzschild metric 
$g_m = \frac{1}{N^2 } d \rho^2 + \rho^2 \sigma $ 
 as 
\begin{align*}
g_m =d s^2+\phi^2 (s) \sigma 
\end{align*}
by setting  $ s = \int_{2m}^\rho  \frac{1}{N (t) } d t $. 
Then 
$ \phi (s) = \rho $ and $ \phi' (s) = N .$
Define 
$$\Phi (s) =\int_0^s  \phi (t) d t  \  \ \mathrm{and} \ \ 
 u=\left\langle\phi\partial_s,\nu_r \right\rangle . $$ 
On $\Sigma_r$, \eqref{eq-Hess-Phi} becomes 
\begin{align} \label{eq-conformal-K}
\Phi_{; ij}=\phi^\prime {g_r}_{ij}-h_{ij}u , 
\end{align}
where  $h$ is the second fundamental form of $ \Sigma_r $.
Taking trace of \eqref{eq-conformal-K} gives
\be \label{eq-Mink-1}
0=2\int_{\Sigma_r} \phi^\prime d\sigma -\int_{\Sigma_r} H_m u \, d \sigma . 
\ee
Now we apply  \cite[Lemma 2.5]{GLW} to get another Minkowski type identity.
Precisely, let $\sigma_2^{ij}=\frac{\partial \sigma_2}{\partial h_{ij}}=\sigma_1 {g_r}^{ij}-h^{ij}$. 
Contracting  $\sigma_2^{ij}$  with $\Phi_{ij}$ shows 
\be \label{eq-Mink-2}
\int_{\Sigma_r}\sigma_2^{ij}\Phi_{; ij}d\sigma =\int_{S_r} H_m  \phi^\prime d\sigma -2\int_{S_r} \sigma_2 u \, d \sigma . 
\ee
Integrating  by parts and applying the Codazzi equation, 
we have
\begin{align*}
\int_{S_r}\sigma_2^{ij}\Phi_{; ij} d\sigma = & \ -\int_{S_r}(\sigma_2^{ij})_{;j} \Phi_{;i} \, d\sigma 
=\int_{S_r}\bRic(\nu_r, \nabla \Phi ) \, d\sigma ,
\end{align*}
where $ \nabla \Phi$ is the gradient of $ \Phi$ on $ \Sigma_r$.  By \eqref{eq-l-Xr}, 
\be \label{eq-Phi-i}
| \nabla \Phi |^2 = {g_r}^{ij}  \Phi_{;i} \Phi_{;j} = O ( r^ {2 - 2 \tau}) .
\ee
This combined  with the fact  $ | \bRic(\nu_r, \cdot ) |  =O(r^{-3})$  shows 
\begin{align*}
\int_{S_r}\sigma_2^{ij}\Phi_{;ij}d\sigma=o(1) .
\end{align*}
Therefore, by \eqref{eq-Mink-2}, 
\be \label{eq-Mink-2-o}
\int_{S_r} H_m \phi^\prime \, d\sigma=2\int_{S_r} \sigma_2 u \, d\sigma+o(1).
\ee
Note that
$ u^2  = | \bar \nabla \Phi |^2 -   | \nabla \Phi |^2 $, 
where $ \bar \nabla $ denotes the gradient on $ \mathbb{M}^3_m$.  Thus, by \eqref{eq-Phi-i}, 
\be
u = r + O ( r^{1 - \tau } ) . 
\ee

Now  let $ K$ be the Gauss curvature of $(S_r, g_r)$. 
By \cite[Lemma 2.1]{FST}, if we let $\bar{K}=K-r^{-2}$, then $ \bar K =O(r^{-2-\tau})$. 
Thus, by  the Gauss equation and  \eqref{eq-Ric-nur}, 
\begin{align*}
\sigma_2=K+\bRic (\nu_r,\nu_r)=\bar{K}+r^{-2}-2mr^{-3}+o(r^{-3}) .
\end{align*}
Following the steps in \cite{FST}, we   have
\begin{align}\label{limit-2-1}
\int_{\Sigma_r} H_m\phi^\prime d\sigma
&=2\int_{\Sigma_r} (\bar{K}+r^{-2}) u \, d\sigma -4mr^{-3}\int_{\Sigma_r}u \, d\sigma +o(1)\\\nonumber
&=2  r^{-2} {\int_{\Sigma_r} \langle \bar \nabla \Phi , \nu_r \rangle dvol}+2\int_{S_r}\bar{K} u \, d\sigma-16\pi m+o(1)\\\nonumber
&=6 r^{-2}  {\int_{\Omega_r} \phi^\prime \dvol} +2r\int_{S_r}(K- r^{-2}) \, d\sigma -16\pi m+o(1)\\\nonumber
&=6 r^{-2}  {\int_{\Omega_r}  \phi^\prime \dvol} +8\pi r- 2 r^{-1} A(r) -16\pi m+o(1),
\end{align}
where $ \Omega_r$ is the bounded  domain enclosed by $ \Sigma_r$ and the horizon boundary 
of $ \mathbb{M}^3_m$ and $ \dvol $ is the volume element on $\mathbb{M}^3_m$. 

Next, let  $\bar{H}_m =H_m- 2 r^{-1}$. By \eqref{eq-Hm-r}, $ \bar H  =O(r^{-1-\tau})$. By \eqref{eq-Mink-1}, 
\begin{equation*}
\begin{split}
2\int_{\Sigma_r}\phi^\prime \, d\sigma &=\int_{\Sigma_r}H_m u \, d\sigma 
=\int_{\Sigma_r}( 2 r^{-1} +\bar{H}_m ) u \, d\sigma \\
&=6 r^{-1} {\int_{\Omega_r} \phi^\prime \dvol}+\int_{\Sigma_r}\bar{H}_m  u \, d\sigma .
\end{split}
\end{equation*}
Since $u=r+O(r^{1-\tau})$ and  $\phi^\prime=N = 1+O(r^{-1})$, we have $u=r\phi^\prime+O(r^{1-\tau})$. Thus,
\begin{equation} \label{eq-Hmu-1}
\begin{split}
2\int_{\Sigma_r}\phi^\prime \, d\sigma & =6 r^{-1} {\int_{\Omega_r}  \phi^\prime \dvol} 
+\int_{\Sigma_r}\bar{H}_m  (r\phi^\prime+O(r^{1-\tau})) \, d\sigma \\
&=6 r^{-1} {\int_{\Omega_r} \phi^\prime \dvol} + r\int_{\Sigma_r}H_m\phi^\prime \, d\sigma 
-2\int_{\Sigma_r}\phi^\prime \, d\sigma +O(r^{2-2\tau}) .
\end{split}
\end{equation}
Since $\phi^\prime=N = 1 - m r^{-1} +O(r^{-1-\tau})$, we also have 
\begin{equation} \label{eq-Hmu-2}
\int_{\Sigma_r}\phi^\prime \, d\sigma =A(r)-4\pi m r+ O ( r^{1- \tau} ). 
\end{equation}
Thus, it follows from \eqref{eq-Hmu-1} and \eqref{eq-Hmu-2} that 
\begin{align}\label{limit-2-2}
\int_{\Sigma_r} H_m\phi^\prime \, d\sigma  = - 6 r^{-2} \int_{\Omega_r} \phi^\prime dvol +4 r^{-1} {A(r)} -16\pi m +o(1).
\end{align}
Combining  (\ref{limit-2-1}) and (\ref{limit-2-2}), and replacing $ \phi' $ by $N$,  we have
\begin{align}\label{limit-2}
\int_{S_r} N H_m  \, d\sigma =4\pi r+\frac{A(r)}{r}-16\pi m+o(1) .
\end{align}
By  (\ref{limit-1}) and (\ref{limit-2}), we therefore  conclude
\begin{align*}
\int_{S_r}N(H_m-H) \, d\sigma =-8\pi m+8\pi \m +o(1), 
\end{align*}
or equivalently 
\begin{align*}
\lim_{r\rightarrow \infty}\left(m+\frac{1}{8\pi} \int_{S_r}N(H_m-H) \, d\sigma \right)= \m ,
\end{align*}
which proves \eqref{eq-intro-nby-limit}.

To prove \eqref{eq-intro-volume-limit}, by  (\ref{limit-2-1}) and (\ref{limit-2-2}),  we also have 
\be \label{eq-N-bulk}
 {\int_{\Omega_r}  N \,  \dvol}  =  {\int_{\Omega_r}  \phi' \,  \dvol}  
=   \frac12  r {A(r)}  -  \frac23 \pi r^3   +o(r^2) . 
\ee
Let $ V(r)$ be the volume of the region enclosed by $S_r$ in $(M, \fg)$.
By (2.28) in \cite{FST}, 
\be \label{eq-V-fst}
V(r)  =   \frac12  r {A(r)}  -  \frac23 \pi r^3   + 2 \pi \m r^2 + o(r^2)  .
\ee
Hence, it follows from  \eqref{eq-N-bulk} and \eqref{eq-V-fst} that 
\be \label{eq-N-bulk-V}
 {\int_{\Omega_r}  N \,  \dvol}   - V(r) =   - 2 \pi \m r^2 + o(r^2)   .
\ee
Next, let $V_m (r)$ denote the volume of $ \Omega_r$ in $\mathbb{M}^3_m$.
We claim
\be \label{eq-int-N-Vm}
{\int_{\Omega_r}  N \,  \dvol}  = V_m (r)  - 2 \pi m r^2 + o(r^2)  . 
\ee
To see this,  let $ D_\rho$ denote the region in $\mathbb{M}^3_m$  bounded by the rotationally symmetric sphere with area $4\pi \rho^2$ 
and the horizon boundary. 
Let $ \rho_0 > 2 m $ be a fixed constant such that, for any $ \rho > \rho_0$, 
\be \label{eq-N-est}
 \left |  N  - \left( 1 - \frac{m}{\rho} \right) \right | \le C_1 \rho^{-2} ,
\ee
where $C_1 > 0 $ is  independent on $\rho$. 
By \eqref{eq-l-Xr} and \eqref{eq-N-est}, for large $r$, we have
\be \label{eq-N-Vol-0}
\begin{split}
\int_{\Omega_r} N  dvol = & \  \int_{\Omega_r \setminus D_{\rho_0} } N  \, \dvol  +   O(1)  \\
= & \  \int_{\Omega_r \setminus D_{\rho_0} } \left(  1 - \frac{m}{\rho}  \right)  \,  \dvol  + O(r)   \\
= & \  V_m (r)  -  \int_{\Omega_r \setminus D_{\rho_0} }  \frac{m}{\rho}  \,  \dvol  +  O(r)  . 
\end{split} 
\ee
By \eqref{eq-l-Xr}, we also have
$$
\int_{ ( D_{r - C  r^{1 -\tau}  } )  \setminus D_{\rho_0} } \rho^{-1}  \, \dvol 
 \le \int_{\Omega_r \setminus D_{\rho_0} } \rho^{-1}  \,  \dvol  
\le  \int_{ ( D_{r + C r^{1 -\tau}  } )  \setminus D_{\rho_0} } \rho^{-1} \,  \dvol  ,
$$
which   implies 
\be \label{eq-rho-1}
\int_{\Omega_r \setminus D_{\rho_0} } \rho^{-1}   dvol   = 2 \pi r^2 + o (r^2)  . 
\ee
Thus,  \eqref{eq-int-N-Vm}  follows from \eqref{eq-N-Vol-0} and  \eqref{eq-rho-1}. 
By \eqref{eq-N-bulk-V}  and  \eqref{eq-int-N-Vm}, we conclude that   
\begin{equation*}
V(r) - V_m (r) = 2 \pi ( \m - m ) r^2 + o (r^2) , 
\end{equation*}
which proves \eqref{eq-intro-volume-limit} of Theorem \ref{thm-intro-iso-emb}. 

\vspace{.3cm}

We end this paper with the following corollary. 

\begin{coro}
Let $(M^{3}, \fg)$ be an asymptotically flat $3$-manifold with nonnegative scalar curvature, 
with boundary $\p M$ being an outer minimizing minimal surface  (with one or more components).  
Let $ S_r$ denote  the large coordinate  sphere in $(M^3, \fg)$ with the induced metric $g_r$.
Let  $ m = \sqrt{ \frac{ | \partial M |} { 16 \pi} }  $. 
For large $r$, let $ X_r$ be the isometric embedding of $(S_r, g_r)$ into $ \mathbb{M}^3_m $
given by Theorem \ref{thm-intro-iso-emb}. 
Let $ V(r)$ and $V_m(r)$ be the volume of the region enclosed by $S_r$ in $(M^3, \fg)$ and 
the region enclosed by $ X_r (S_r)$ in $\mathbb{M}^3_m$, respectively. 
Then  
$$
\lim_{r \to \infty} \frac{ V(r) - V_m (r) }{ 2\pi r^2} \ exists \ and \ is \ge 0 ,
$$
and ``$=$" holds if and only if $(M^3, \fg )$ is isometric to $\mathbb{M}^3_m$. 
\end{coro}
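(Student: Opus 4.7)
The plan is to reduce the corollary directly to two results already established in the paper: the volume asymptotics \eqref{eq-intro-volume-limit} of Theorem \ref{thm-intro-iso-emb} and the Riemannian Penrose inequality (Theorem \ref{thm-intro-RPI}).

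First, I would verify that the hypotheses of Theorem \ref{thm-intro-iso-emb} are met by $(M^3,\fg)$ and our chosen $m$. Since the theorem holds for \emph{any} positive constant $m$, we are free to take $m = \sqrt{|\partial M|/(16\pi)}$; note this is the same $m$ that makes the Penrose bound $\frac12 (|\partial M|/\omega_2)^{1/2}$ equal to $m$, since $\omega_2 = 4\pi$. Then \eqref{eq-intro-volume-limit} gives
\begin{equation*}
V(r) - V_m(r) = 2\pi r^2 (\m - m) + o(r^2), \quad r \to \infty,
\end{equation*}
where $\m$ is the ADM mass of $(M^3,\fg)$. Dividing by $2\pi r^2$ shows immediately that
\begin{equation*}
\lim_{r \to \infty} \frac{V(r) - V_m(r)}{2\pi r^2} = \m - m .
\end{equation*}
So the existence of the limit is automatic; only the sign and the rigidity remain.

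Next, I would invoke the Riemannian Penrose inequality. Since $(M^3,\fg)$ is asymptotically flat with nonnegative scalar curvature and $\partial M$ is an outer minimizing minimal surface, Theorem \ref{thm-intro-RPI} with $n=2$ gives
\begin{equation*}
\m \geq \frac12 \left(\frac{|\partial M|}{\omega_2}\right)^{1/2} = \sqrt{\frac{|\partial M|}{16\pi}} = m,
\end{equation*}
so $\m - m \geq 0$, proving the inequality part.

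For rigidity, equality $\m = m$ in the displayed limit forces equality in the Penrose inequality above. By the rigidity clause of Theorem \ref{thm-intro-RPI}, this occurs if and only if $(M^3,\fg)$ is isometric to a spatial Schwarzschild manifold outside its horizon; matching the masses (both equal $m$) identifies this Schwarzschild manifold as $\mathbb{M}^3_m$. Conversely, if $(M^3,\fg)$ is isometric to $\mathbb{M}^3_m$, then $\m = m$ and the limit vanishes. Since every step is a direct appeal to a result already proven, there is no serious obstacle; the only conceptual point worth flagging is that the independent choice of $m$ in Theorem \ref{thm-intro-iso-emb} must be coordinated with the Penrose bound, which is precisely what the hypothesis $m = \sqrt{|\partial M|/(16\pi)}$ enforces.
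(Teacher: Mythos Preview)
Your proposal is correct and follows exactly the same approach as the paper: the paper's proof is the single sentence ``This follows directly from \eqref{eq-intro-volume-limit} and the $3$-dimensional Riemannian Penrose inequality,'' and you have simply spelled out those two steps in detail.
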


\begin{proof}
This follows directly from \eqref{eq-intro-volume-limit} and the $3$-dimensional Riemannian Penrose inequality. 
\end{proof}

\end{document}